\documentclass[12pt]{article}
\usepackage[cp1251]{inputenc}
\usepackage[T2A]{fontenc}
\usepackage{indentfirst}
\usepackage{amsmath}
\usepackage{graphicx}
\usepackage{float}
\usepackage{amsfonts}
\usepackage{amssymb}
\usepackage{amscd}
\usepackage{mathtext,amsmath, amstext, amsgen, amsthm} % commath}
\usepackage{amsopn, amsfonts, mathrsfs, euscript,amscd,amssymb,latexsym}
\theoremstyle{definition}

\newtheorem{lemma}{Lemma}[section]
\newtheorem{theorem}{Theorem}[section]
\newtheorem{collor}{Corollary}[section]
\newtheorem{proposition}{Proposition}[section]
\newtheorem{remark}{Remark}[section]

\numberwithin{equation}{section}
\setcounter{section}{-1}

\begin{document}

\title{Non-alternating Hamiltonian Lie algebras\\
 in characteristic 2. I}
\author{A.V. Kondrateva, M.I. Kuznetsov, N.G. Chebochko}
\date{}
\maketitle
\begin{center}
{\it {National Research Lobachevsky State University of Nizhny Novgorod, Russia}}\\
e-mail:
alisakondr@mail.ru, kuznets-1349@yandex.ru\\
{\it {National Research University Higher School of Economics,\\ Nizhny Novgorod, Russia}}\\
e-mail:
chebochko@mail.ru
\end{center}

\noindent ABSTRACT. The classification of graded non-alternating Hamiltonian Lie algebras over perfect field of characteristic 2 is obtained. It is shown that the filtered deformations of such algebras correspond to non-alternating Hamiltonian forms with polynomial coefficients in divided powers. It is proved that graded non-alternating Hamiltonian algebras are rigid with respect to filtered deformations except for some cases when the number of variables is 2, 3, 4 or when the heights of some variables are equal to 1.\newline
\newline
\newline
\noindent ACKNOWLEDGEMENTS. The authors thank S.M. Skryabin for sending them his papers on modular Lie algebras of Cartan type. The investigation is funded by RFBR according to research project N 18-01-00900\newline
\newline
\newline

I. Kaplansky in the work ~ \cite{Kap} gave examples of exceptional simple Lie algebras of characteristic 2. The third series of constructed algebras, denoted as Kap 3, is similar to the construction of Block algebras ~ \cite{Bl}, in which the skew-symmetric form is replaced by a non-alternating form . As it is well known, Blok algebras over a field of characteristic $ p $ are isomorphic to Hamiltonian Lie algebras over an algebra of divided powers ~ \cite{W1}, ~ \cite{KKu}. L. Lin ~ \cite{N} constructed a series of Hamiltonian Lie algebras of characteristic 2 with the simplest symmetric Poisson bracket $ \{f, ~ g \} = \sum \partial_if \partial_ig $ on the algebra of divided degrees and showed that the Lie algebras Kap 3 correspond to the case when the heights of the variables are equal to 1. Over the field of zero characteristic, this bracket occurs in finite-dimensional Hamiltonian Lie superalgebras $ H (n). $ Graded non-alternating Hamiltonian Lie algebras were intensively investigated by D. Leites, S. Bouarrouj, U. Yier, M. Messaoudene , P. Grozman, A. Lebedev, I. Schepochkina in the direction of disseminating ideas and methods
Lie superalgebra theory for the case of Lie algebras of even characteristic. A complex of symmetric differential forms in divided powers was constructed, which led to a more natural definition of non-alternating Hamiltonian Lie algebras within the framework of Hamiltonian formalism, an analysis of graded algebras from the point of view of Cartan prolongations was carried out, some of Volichenko algebras were considered (see ~ \cite{BGL}, ~ \cite{YL}, ~ \cite{L}). In these papers, the class of investigated Poisson brackets on the algebra of divided powers of characteristic 2 corresponded to standard brackets with constant coefficients for Hamiltonian Lie superalgebras containing even and odd variables.

In this paper, we develop a general theory of non-alternating Hamiltonian Lie algebras over a field of characteristic 2, which makes it possible to assign these algebras to Lie algebras of Cartan type. Here, a classification of graded non-alternating Hamiltonian Lie algebras is obtained based on the constructed complete system of invariants of non-alternating symmetric bilinear forms of characteristic 2 with respect to the flag automorphism group (Theorem 1.1). Namely, two graded non-alternating Hamiltonian Lie algebras over the algebra of divided powers defined by the flag $ \mathscr{F} $ are isomorphic if and only if the invariants of the corresponding bilinear forms coincide (Theorem 5.2). In contrast to the classical Hamiltonian case, there is a large number of equivalence classes of non-alternating forms with constant coefficients for a fixed nontrivial flag, which correspond to new simple graded Lie algebras. It is shown that the filtered deformations of graded non-alternating Hamiltonian Lie algebras with certain exceptions correspond to non-alternating Hamiltonian forms with polynomial coefficients in divided powers (Theorem 2.3). Note that classical graded Hamiltonian Lie algebras can have filtered deformations which correspond to differential forms with non-polynomial coefficients ~ \cite {Kac}. Moreover, it is proved that, in contrast to the classical Hamiltonian algebras, under fairly general conditions, for example, when the heights of some variables are greater than 1, the graded non-alternating Hamiltonian algebras are rigid with respect to filtered deformations (Theorem 3.2). A description of derivations and automorphisms of filtered (and in particular, graded) non-alternating Hamiltonian Lie algebras is given (Theorems 3.2, ~5.2) All results are valid except for some cases when the heights of some variables are 1, or when the number of variables is $ n = 2, ~ 3, $ or $ 4. $ The fact that these exceptions are not random is confirmed by the examples of simple filtered Lie algebras announced at the end of the paper. It has been shown earlier that a graded non-alternating Hamiltonian Lie algebra of four variables of height 1 is isomorphic to a classical filtered Hamiltonian Lie algebra (\cite{KKCh1}). In Section 0 the invariant definition of a complex of symmetric differential forms in divided powers, a description of its cohomology, other necessary definitions, and preliminary results are given.

 We employ the technique used in the study of modular Lie algebras of Cartan type (see ~ \cite {St}).
The work of V.G. Kac ~ \cite{Kac}, in which it was shown that all filtered deformations of graded Lie algebras of Cartan type correspond to more general forms of the same kind, exerted a significant impact on the field. In this work the results and methods of the general theory of filtered Lie algebras (Spencer cohomology, Serre's involutivity criterion, etc.) were used for the first time in the case of characteristic $p$. S.M. Skryabin in ~ \cite{S1} --  ~\cite{Sk2} developed another approach to the description of filtered deformations of Lie algebras of Cartan type, based on the theory of coinduced modules ~ \cite{Blattner}, ~ \cite{K} . Both approaches can be applied to the study of filtered deformations of non-alternating Hamiltonian Lie algebras. In the present paper the results and methods of S.M. Skryabin are used. Earlier, invariants of classical Hamiltonian forms with coefficients in divided power algebras (so called polynomial case) were found that determine the isomorphism class of general Hamiltonian Lie algebras, the value of the form at zero and its cohomological class ~\cite{Kac},~ \cite{KuKir}. \cite{BG}. The problem of classification of classical Hamiltonian forms was considered in ~\cite{BG} and  was completely solved by S.M. Skryabin (see ~\cite{S1}, ~\cite{S}). We use the methods of S.M. Skryabin \cite{S} in the study of non-alternating Hamiltonian forms and, in particular, in the construction of invariants of non-alternating symmetric forms of characteristic 2.

The ground field $ K $ is assumed to be a perfect one of characteristics 2.

\section{Symmetric differential forms}

Let $R$ be a commutative algebra over a field $K$ of characteristic $p>0$, $E$ be a free $R$-module of rank $n$. The commutative $R$-algebra of divided powers $O (E)$ is given by generators $u^{(r)}$, $u\in E$, $r\in\mathbb{Z}$, $r\geqslant 0$ and relations
\begin{equation}\label{eqeq 0.1}
\begin{array}{c}
	u^{(0)}=1, ~~u^{(1)}=u, \\
	(au)^{(r)}=a^{r}u^{(r)}, ~~a\in K, \\
	u^{(r)} u^{(s)} = \dbinom{r+s}{r}u^{(r+s)}, \\
	(u_{1}+u_{2})^{(r)} = \sum\limits_{i=1}^{r} u_{1}^{(i)} u_{2}^{(r-i)}
\end{array}\end{equation}
(see \cite{KSh}).

Let $\{ x_{1}, \ldots, x_{n}\} $ be the basis of $E$ over $R$. Elements $x^{(\alpha)} = x_{1}^{(\alpha_{1})}\cdot \ldots\cdot x_{n}^{(\alpha_{n})}$, $\alpha = (\alpha_{1}, \ldots, \alpha_{n})$, $\alpha_{i}\in\mathbb{Z}$, $\alpha_{i} \geqslant 0$ form the the monomial basis of $O (E)$,
\begin{equation}\label{eqeq 0.2}
	x^{(\alpha)}\cdot x^{(\beta)} = \dbinom{\alpha+\beta}{\alpha}x^{(\alpha+\beta)}.
\end{equation}

An algebra $O (E)$ with a fixed basis $\{ x^{(\alpha)} \}$ is denoted by $O(n)$. The algebra $O(E)$  has the standard grading,  $O (E) = \bigoplus\limits_{i\geqslant 0} O_{i}$, $O_{i}= \langle x^{(\alpha)} ~\big|~ |\alpha|=\alpha_{1}+ \ldots+ \alpha_{n}= i \rangle$, and filtration $O(E)_{(i)} = \bigoplus\limits_{j\geqslant i} O_{j}$. The standard grading and filtration are independent on the $\{ x_{i} \}$ basis of the module $E$. $O(E)$ is a local algebra with maximal ideal $\mathfrak{m} = O(E)_{(1)}$. The completion of $O(E)$ by the system of ideals $\{ \mathfrak{m}^{(i)} = O(E)_{(i)} \}$ is denoted by $\widehat{O}(E)$.

Let $V$ be a free $R$-module of rank $n$, $E=\text{Hom}_{R} (V,R)$. The algebra $O(E)$ is canonically isomorphic to the algebra of symmetric polynomial functions on $V$ with values in $R$ with multiplication
\begin{equation}\label{eqeq 0.3}
	\omega_{l}\omega_{r} (v_{1}, \ldots, v_{l+r}) = \sum_ {\sigma} \omega_{l}(v_{\sigma(1)}, \ldots, v_{\sigma(l)})\omega_{r} (v_ {\sigma(l+1)}, \ldots, v_ {\sigma(l+r)}),
\end{equation}
where the sum is taken over all substitutions of $\sigma\in S_{l+r}$ such that $\sigma(1) < \ldots < \sigma(l)$, $\sigma(l+1) < \ldots < \sigma(l+r)$.
Indeed, the symmetric polynomial map $\omega_{r}\colon V^{r}\rightarrow R$ corresponds to $\widehat {\omega}_{r}\in S^{r}(V)^{\ast} = \text{Hom}_{R}(S^{r} (V), R)$,
\begin{equation}\label{eqeq 0.4}
	\widehat{\omega}_{r} (v_{1}\cdot \ldots \cdot v_{r}) = \omega_{r} (v_{1}, \ldots, v_{r}).
\end{equation}
Here $S(V)$ is the symmetric bialgebra of $R$-module $V$, $S(V)=\bigoplus\limits_{r\geqslant 0} S^{r} (V)$, with comultiplication $\Delta$, $\Delta(v)= v\otimes 1 + 1\otimes v$, and the counit $\varepsilon $, $\varepsilon(v) = 0$, $v\in V$. Correspondence \eqref{eqeq 0.4} is an isomorphism of algebras $S(V)^{\ast}=\bigoplus\limits_{r\geqslant 0} S^{r}(V)^{\ast}$ with the multiplication $\widehat{\omega}_{l}\widehat{\omega}_{r}= \widehat{\omega}_{l}\otimes\widehat{\omega}_{r} \circ \Delta$ and the algebra of symmetric multilinear functions on $V$ with multiplication \eqref{eqeq 0.3}.

For $u\in S^{1}(V)^{\ast} = E = \text{Hom}_{R}(V,R)$ the divided powers $u^{(r)}$, $r\geqslant 0$ are defined as follows 
$$u^{(0)}=1, ~~u^{(r)} (v_{1}\cdot \ldots\cdot v_{r})= u (v_{1})\cdots u (v_{r}),$$
for which \eqref{eqeq 0.1} relations \eqref{eqeq 0.1} hold. Let $\{ v_{i} \} $ be the basis of $V$, $ \{ w_{i} \} $ be the dual basis of the module $E$. Denote by $\{ w^{(\alpha)} \}$ the basis of $S(V)^{\ast}$, the dual to the basis $\{ v^{\alpha}, ~\alpha=(\alpha_{1}, \ldots, \alpha_{n}),  \alpha_{i}\geqslant 0 \}$ of $S(V)$. Then
$$w^{(\alpha)}w^{(\beta)} = \dbinom{\alpha+\beta}{\alpha}w^{(\alpha+\beta)}.$$
Thus, $S(V)^{\ast}$ isomorphic to the algebra $O (E)$. In particular $u^{p}=0$ for any 1-form $u\in V$.

Divided powers can be defined for any $u\in \mathfrak{m}\subset O(E)$ (or $u\in \widehat{\mathfrak{m}}\subset \widehat{O}(E)$) so that the \eqref{eqeq 0.1} and
$$(u^{(l)})^{(r)}= \dfrac{(lr)!} {(l!) ^{r}r!}u^{(lr)}, ~~(\omega_{1}\omega_{2})^{(r)}= \omega_{1}^{r}\omega_{2}^{(r)},$$
are fulfilled (see \cite{Kac}, \cite{Sk1}, \cite{St}).

Using the isomorphism $\omega_{r}\mapsto \widehat{\omega}_{r}$ (see \eqref{eqeq 0.4}), we will identify the algebra of symmetric multilinear functions on $V$ and the algebra of divided powers $O(E)= S(V)^{\ast}$.

Let $R=K$. A derivation $ D $ of algebra $O(E)$ is called \textit{special} if $Du^{(r)}= u^{(r-1)}Du$, $u\in E$. The Lie algebra of all special derivations of the algebra $O(E)$ is denoted by $W(E)$.

Let $\mathscr{F}\colon E=E_{0}\supseteq E_{1}\supseteq \ldots \supseteq E_{r}\supset E_{R+1}=\{0\}$ be a flag of $E$. A basis $\{ x_{i} \}$ of $E$ is called \textit{coordinated with} $\mathscr{F}$  if $\{ x_{i} \}\cap E_{j}$ is a basis of  $E_{j}$. A subalgebra $O(\mathscr{F}) $ of $ O(E) $ is generated by $ u^{(p^{j})}$, $ u\in E_{j}$,$j= 0, \ldots, r$.

Let $\{ x_{i} \} $ be a basis of $E$ coordinated with $\mathscr{F}$. The elements $x^{(\alpha)}$, $\alpha= (\alpha_{1}, \ldots, \alpha_{n})$, $0\leqslant\alpha_{i}< p^{m_{i}}$ form the basis of $O (\mathscr{F})$. Here $m_{i}$ is the height of variable $x_{i}$, $m_{i} = \min\{ s ~|~ x_{i}\notin E_{s} \}$. The algebra $O (\mathscr{F})$ with such basis is denoted by $O(n, \overline{m})$, $\overline{m}=(m_{1}, \ldots, m_{n})$. By $W (\mathscr{F})$ (resp. $W(n, \overline{m})$) is denoted the Lie subalgebra in $W (E)$ consisting of all special derivations preserving  $O(\mathscr{F})$ (resp. $O(n, \overline{m})$). The partial derivations $\partial_{i}$, $\partial_{i}x^{(\alpha)}=x^{(\alpha - \varepsilon_{i})}$, $\varepsilon_{i}= (0, \ldots, 1, \ldots, 0)$, form the basis of the free $O(\mathscr{F})$-module $W(\mathscr{F})$. The standard grading (resp. filtration) of the algebra $O (E)$ induces a standard grading (resp. filtration) of algebras $W(E)$ and $W (\mathscr{F})$, $W (\mathscr{F})_{i}= \langle x^{(\alpha)}\partial_{j}, ~ / \ alpha|= i+1, ~j=1, \ldots, n \rangle$, $x^{(\alpha)} \ in O (\mathscr{F}$, $W (\mathscr{F})_{(i)}= \langle x^{(\alpha)}\partial_{j}, ~|\alpha|\geqslant i+1, ~j=1, \ldots, n \rangle$ (similar for $W (E)$). Let $\mathfrak{m}$ be the maximum ideal of $O (E)$, then $\mathfrak{m}\cap O (\mathscr{F})$ be the maximum ideal of $O (\mathscr{F})$, which is denoted by $\mathfrak{m} (\mathscr{F})$.

Now let $R= O(\mathscr{F})$, $V = W (\mathscr{F})$. Then $V^{\ast}= \text{Hom}_{R} (W (\mathscr{F}),R)$ is $R$-module of differential $1$-forms and is denoted by $S\Omega^{1}(\mathscr{F})$. The $R$-algebra of the divided powers $O(V^{\ast}) = S(V)^{\ast}$ is the algebra of symmetric differential forms denoted by $S\Omega(\mathscr{F}) = \bigoplus\limits_{i\geqslant 0} S\Omega^{i}(\mathscr{F})$, $S\Omega^{0} = R= O(\mathscr{F})$, $S\Omega^{1}(\mathscr{F}) = \Omega^{1}(\mathscr{F}) = V^{\ast}$.

In what follows we assume that $K$ is a perfect field of characteristic $ 2 $.

The inner product of $D\in W(\mathscr{F})$, $$D\lrcorner \omega_{r} (D_{1}, \ldots, D_{r-1}) = \omega_{r} (D, D_{1}, \ldots, D_{r-1})$$ has properties
$$D\lrcorner (\omega_{1}\omega_{2})= (D\lrcorner \omega_{1})\omega_{2} + \omega_{1} (D\lrcorner \omega_{2}),$$
$$D\lrcorner \omega^{(r)}= (D\lrcorner \omega)\omega^{(r-1)}.$$

The Lie algebra $W (\mathscr{F})$ acts naturally on $S\Omega (\mathscr{F})$ by derivations, i.e.
$$D (f\omega)= D(f)\omega + f (D\omega),$$
$\omega\in s\Omega(\mathscr{F})$, $f\in O(\mathscr{F})$.

The following identity, known for skew-symmetric forms, is also true in $S\Omega(\mathscr{F})$
\begin{equation}\label{eqeq 0.5}
	D_{1} (D_{2}\lrcorner \omega)= [D_{1}, D_{2}]\lrcorner \omega + D_{2}\lrcorner (D_{1}\omega).
\end{equation}

Over a field of characteristic $2$, we define the differential $d\colon s\Omega^{r}(\mathscr{F}) \rightarrow S\Omega^{r+1} (\mathscr{F})$ such that $df(D)= D(f)$,
\begin{gather*}
	(d\omega_{r}) (D_{1},\ldots,D_{r+1})=\sum_{i=1}^{r+1}D_{i}(\omega(D_{1},\ldots, \widehat{D_{i}}, \ldots, D_{r+1})+ \\
	+\sum_{i<j}\omega([D_{i},D_{j}], D_{1},\ldots, \widehat{D_{i}},\ldots, \widehat{D_{j}}, \ldots,D_{r+1}),
\end{gather*} \ \ [- 1cm]
$$d^{2}=0, ~d\omega^{(r)}= \omega^{(r-1)} d\omega, ~d(\omega_{1}\omega_{2})= (d\omega_{1})\omega_{2}+ \omega_{1}(d\omega_{2}).$$

We consider alternating forms as symmetric, such that $ \omega_{r} (D_{1},\ldots, D_{r}) = 0$ if $D_{i}=D_{j}$ for some $i\neq j$. Therefore, the de Rham complex $(\Omega (\mathscr{F}), d)$ is a subcomplex of the complex $(S\Omega (\mathscr{F}), d)$. The Cartan homotopy formula also holds in $S\Omega (\mathscr{F})$:
\begin{equation}\label{eqeq 0.6}
	D\omega= D\lrcorner d\omega+ d(D \lrcorner \ omega).
\end{equation}
\begin{theorem}[\cite{KKCh}]\label{ter 0.1}
{\it Let $x_{i}$, $i= 1,\ldots, n$ be the standard variables of the algebra of divided powers $O (\mathscr{F})= O (n, \overline{m})$ over field $ K $ of characteristics $2$, $\overline{x_{i}}= x_{i}^{(2^{m_{i} -1)}}$, $B (n)$ graded algebra of divided powers over $K$ in variables $(dx_{i})^{(2)}$, $i= 1,\ldots, n$ of degree two, $(\Omega(\mathscr{F}), d)$ de Rham complex over $O(\mathscr{F})$. The following statements are true:
	\begin{itemize}
		\item[$i.~$] The cohomology ring $H^{\ast} (S\Omega (\mathscr{F}))$ is a tensor product of graded algebras 
		$$H^{\ast} (S\Omega (\mathscr{F}))= B (n)\otimes_{K} H^{\ast} (\Omega (\mathscr{F})),$$  
		\item[$ii.$] $\dim H^{i} (S\Omega(\mathscr{F}))= \dbinom{n+i-1}{i}$,
		\item[$iii.$] $H^{1}(S\Omega(\mathscr{F}))= H^{1}(\Omega(\mathscr{F})) = 
		\langle [\overline{x_{i}}dx_{i}], ~i= 1,\ldots , n \rangle $,
		\item[$iv.$] $H^{2} (S\Omega(\mathscr{F}))= \langle [(dx_{i})^{(2)}], ~i= 1,\ldots , n, ~~[\overline{x_{i}}\overline{x_{j}}dx_{i}dx_{j}], ~1\leqslant i<j\leqslant n \rangle $.
	\end{itemize}}
\end{theorem}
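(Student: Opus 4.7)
The plan is a Künneth argument reducing to one variable at a time. First I would fix a basis $\{x_i\}$ of $E$ coordinated with $\mathscr{F}$ and note that $S\Omega(\mathscr{F})$ has $K$-basis $\{x^{(\alpha)}(dx)^{(\beta)}\}$, $0\le\alpha_i<2^{m_i}$, $\beta_i\ge 0$. Since $d\,(dx_j)^{(r)}=(dx_j)^{(r-1)}\,d(dx_j)=0$, the Leibniz rule yields
$$d\bigl(x^{(\alpha)}(dx)^{(\beta)}\bigr)=\sum_{i=1}^n(\beta_i+1)\,x^{(\alpha-\varepsilon_i)}(dx)^{(\beta+\varepsilon_i)},$$
so $(S\Omega(\mathscr{F}),d)$ is, as a $K$-complex, the tensor product of the one-variable complexes $S\Omega_i$ on $\{x_i^{(a)}(dx_i)^{(b)}\}$. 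The Künneth formula over the field $K$ then reduces the computation to the factors $H^*(S\Omega_i)$.

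For a single variable, in characteristic $2$ the coefficient $\beta_i+1$ vanishes exactly when $\beta_i$ is odd, so $S\Omega_i$ breaks into two-term subcomplexes
$$\langle x_i^{(a)}(dx_i)^{(2k)}\rangle\xrightarrow{\;\partial_i\otimes\mathrm{id}\;}\langle x_i^{(a)}(dx_i)^{(2k+1)}\rangle,$$
each a shift of $O(1,m_i)\xrightarrow{\partial_i}O(1,m_i)$, whose kernel is $K\cdot 1$ and whose cokernel is $K\cdot[\overline{x_i}]$. Thus $H^*(S\Omega_i)$ has $K$-basis $\{[(dx_i)^{(2k)}],[\overline{x_i}(dx_i)^{(2k+1)}]\}_{k\ge 0}$, and the identities $(dx_i)^{(2k)}=((dx_i)^{(2)})^{(k)}$, $(dx_i)^{(2k+1)}=dx_i\cdot((dx_i)^{(2)})^{(k)}$ (which follow from the divided-power relations of \eqref{eqeq 0.1} after noting that $(2k-1)!!$ and $2k+1$ are odd) identify $H^*(S\Omega_i)$ with $B(1)_i\otimes H^*(\Omega_i)$, where $B(1)_i$ is the divided-power algebra on the closed class $(dx_i)^{(2)}$ and $H^*(\Omega_i)=K\oplus K[\overline{x_i}dx_i]$ is the classical de Rham cohomology of $O(1,m_i)$.

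Reassembling via Künneth, together with the de Rham Künneth $H^*(\Omega(\mathscr{F}))=\bigotimes_i H^*(\Omega_i)$, gives (i). For (ii) I would simply multiply Poincaré series in the form degree: $B(n)$ has series $(1-t^2)^{-n}$ and $H^*(\Omega(\mathscr{F}))$ has series $(1+t)^n$, so their product is $(1-t)^{-n}=\sum_i\binom{n+i-1}{i}t^i$. Statements (iii) and (iv) are then immediate by enumerating the generators of total form degree $1$ and $2$ in $B(n)\otimes H^*(\Omega(\mathscr{F}))$: in degree $1$ only the de Rham factor contributes (since $B(n)$ is concentrated in even form degree), giving the classes $[\overline{x_i}dx_i]$; in degree $2$ one finds the $n$ classes $[(dx_i)^{(2)}]$ from $B(n)_2\otimes H^0(\Omega(\mathscr{F}))$ together with the $\binom{n}{2}$ classes $[\overline{x_i}\overline{x_j}dx_idx_j]$ from $B(n)_0\otimes H^2(\Omega(\mathscr{F}))$.

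The only subtle point I anticipate is justifying the tensor-product decomposition in the first paragraph: $S\Omega(\mathscr{F})=O(V^*)$ is a priori built as an $R$-linear divided-power algebra on the $R$-module $V=W(\mathscr{F})$, not as a $K$-tensor product, and one must unpack the relations \eqref{eqeq 0.1}--\eqref{eqeq 0.2} on monomials $x^{(\alpha)}(dx)^{(\beta)}$ to confirm that the Leibniz formula for $d$ really factors as a sum of commuting differentials acting on independent one-variable tensor factors. Once this bookkeeping is in place, the remainder of the argument is standard Künneth machinery combined with the elementary one-variable computation.
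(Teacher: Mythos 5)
Your argument is correct. Note that the paper itself does not prove Theorem~0.1 but imports it from the reference [KKCh], so there is no internal proof to compare against; your route --- splitting $(S\Omega(\mathscr{F}),d)$ as a characteristic-$2$ tensor product of one-variable complexes, observing that each one-variable complex decomposes into the two-term pieces $\langle x_i^{(a)}(dx_i)^{(2k)}\rangle\to\langle x_i^{(a)}(dx_i)^{(2k+1)}\rangle$ governed by the parity of $\beta_i+1$, and reassembling by K\"unneth --- is exactly the standard method by which the de Rham cohomology of divided power algebras is computed in the literature the paper cites (Krylyuk, Strade), extended in the natural way to the symmetric complex. The one point you flag as subtle is indeed the only one requiring care, and your verification via the relations $((dx_i)^{(2)})^{(k)}=(2k-1)!!\,(dx_i)^{(2k)}$ and $dx_i\cdot(dx_i)^{(2k)}=(2k+1)(dx_i)^{(2k+1)}$, with both coefficients odd, closes it.
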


\begin{remark}
	1. De Rham cohomology over the algebra $O (\mathscr{F})$ is obtained in ~\cite{Kr} (see also ~\cite{St}). \\
	2. All definitions of this section hold for the case of infinite dimensional algebras such as $O(E)$, $\widehat{O}(E)$, $O(\mathscr{F})$, $\widehat{O}(\mathscr{F})$ for the infinite flag $\mathscr{F}$, with natural constraints of topological or other nature. For example, for $\widehat{O}(\mathscr{F})$ it is natural to consider continuous in $\{mathfrak{m}^{(i)}\}$-adic topology special derivations, and for algebras $O (E)$, $\widehat{O}(E)$ in Theorem~\ref{ter 0.1} we must take into account that $H^{\ast} (\Omega)= H^{0}(\Omega) = K$ (Poincar\' Lemma).
\end{remark}

The continuous isomorphism $\sigma\colon \widehat{O}{E})\rightarrow \widehat{O}(E') $ is called admissible if $\sigma (x^{(\alpha)})= \sigma (x)^{(\alpha)}.$ If $\sigma (O(\mathscr{F})=O(\mathscr{F'})$, then $\sigma $ is called an admissible isomorphism of the corresponding algebras. In this case, $\sigma $ induces an isomorphism of Lie algebras $W (\mathscr{F})$ and $W (\mathscr{F}')$,
$\sigma (D)= \sigma\circ D\circ \sigma^{-1}$ (~\cite{W}). An admissible isomorphism $\sigma$  continues to the isomorphism 
$S\Omega(\mathscr{F})\rightarrow S\Omega(\mathscr{F}')$ by the rule

$$(\sigma\omega) (D_{1},\ldots, D_{k})= \sigma (\omega (\sigma^{-1}D_{1}\sigma, \ldots, \sigma^{-1}D_{k}\sigma)).$$

Moreover, $\sigma d\omega = d\sigma\omega $, $\sigma(\omega_{1}\omega_{2}) = \sigma\omega_{1}\cdot \sigma\omega_{2}$ and $\sigma(\omega^{(k)})= (\sigma\omega)^{(k)}$.

  The following Lemma will be applied in section 3.
\begin{lemma}\label{lemma 0.2}
{\it Let $\omega= \sum\limits_{i=1}^{n}\omega_{ii} (dx_{i})^{(2)} + \sum\limits_{i<j}\omega_{ij}dx_{i}dx_{j}$ be a symmetric $2$ - form with coefficients from $\widehat{O} (E)$. If $d\omega= 0$, then $\omega_{ii}\in K$.}
\end{lemma}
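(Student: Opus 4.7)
My plan is to expand $d\omega$ in the natural basis of degree-$3$ divided-power monomials in the $dx_i$'s and read off the coefficients that involve only the diagonal entries $\omega_{ii}$. Since $d(dx_i)=0$, the Leibniz rules $d(\omega_1\omega_2)=(d\omega_1)\omega_2+\omega_1(d\omega_2)$ and $d\omega^{(r)}=\omega^{(r-1)}d\omega$ give $d((dx_i)^{(2)})=0$ and $d(dx_i\,dx_j)=0$, so with $df=\sum_k\partial_k f\cdot dx_k$ I obtain
$$d\omega=\sum_{i,k}\partial_k(\omega_{ii})\,dx_k(dx_i)^{(2)}+\sum_{i<j,\,k}\partial_k(\omega_{ij})\,dx_k\,dx_i\,dx_j.$$

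Next I will apply the characteristic-$2$ divided-power identities $dx_i\cdot dx_i=2(dx_i)^{(2)}=0$ and $dx_i\cdot(dx_i)^{(2)}=\binom{3}{1}(dx_i)^{(3)}=(dx_i)^{(3)}$. These kill every monomial $dx_k\,dx_i\,dx_j$ with a repeated index, so the second sum contributes only monomials in three pairwise distinct $dx$'s, which never equal $(dx_i)^{(3)}$ or $dx_k(dx_i)^{(2)}$. From the first sum the coefficient of $(dx_i)^{(3)}$ is $\partial_i\omega_{ii}$ and, for $k\ne i$, the coefficient of $dx_k(dx_i)^{(2)}$ is $\partial_k\omega_{ii}$. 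Since $(dx_i)^{(3)}$, the $dx_k(dx_i)^{(2)}$ with $k\ne i$, and the triple products in three distinct indices are part of a basis of the degree-$3$ component, the condition $d\omega=0$ forces $\partial_k\omega_{ii}=0$ for every pair $(i,k)$.

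Finally, an element of $\widehat O(E)$ killed by every $\partial_k$ must lie in $K$: writing $\omega_{ii}=\sum_\alpha c_\alpha x^{(\alpha)}$ and applying $\partial_k$ shows $c_\alpha=0$ whenever $\alpha_k\geqslant 1$, so only $c_0$ survives and $\omega_{ii}\in K$. The only delicacy in the whole argument is the characteristic-$2$ bookkeeping in the divided-power algebra: the cancellations $(dx_i)^2=0$ and $dx_i(dx_i)^{(2)}=(dx_i)^{(3)}$ are precisely what make the $(dx_i)^{(3)}$ and $(dx_i)^{(2)}dx_k$ components of $d\omega$ see only the diagonal coefficients, which is the entire content of the lemma; no information about the off-diagonal $\omega_{ij}$ is needed.
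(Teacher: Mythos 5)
Your proof is correct and follows essentially the same route as the paper's: both separate the diagonal part $\sum_i\omega_{ii}(dx_i)^{(2)}$ from the alternating part $\sum_{i<j}\omega_{ij}dx_i dx_j$ and observe that $d$ sends them into complementary spans of degree-$3$ monomials, so that $d\omega=0$ forces $\partial_k\omega_{ii}=0$ for all $i,k$ and hence $\omega_{ii}\in K$. The paper phrases the separation structurally, via the ideal generated by the $(dx_i)^{(2)}$ meeting the alternating subcomplex $\widehat{\Omega}(E)$ trivially, whereas you carry it out by explicit expansion in the divided-power monomial basis; the underlying content is identical.
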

\begin{proof}
	Write $\omega$ as $\omega = \omega_{1} + \omega_{2}$, $\omega_{1}= \sum\limits_{i=1}^{n}\omega_{ii}(dx_{i})^{(2)}$, $\omega_{2}= \sum\limits_{i<j}\omega_{ij}dx_{i}dx_{j}$. Obviously $\omega_{2}\in \widehat{\Omega}(E)$. Let $I$ be the ideal of $\widehat{S\Omega}(E)$ generated by $(dx_{i})^{(2)}$, $i= 1,\ldots, n$. Here $\widehat{S\Omega}(E)$ is the algebra of $\widehat{O}(E)$-multilinear maps from $\widehat{W}(E)$ to $\widehat{O}(E)$. Since $d\omega_{2}\in \widehat{\Omega}(E)$, $d(f(dx_{i})^{(2)}) = \\ =\sum\limits_{j} \partial_{j}f dx_{j}dx_{i}^{(2)}\in I$ and $I\cap \widehat{\Omega}(E)=0$, then $d\omega_{1}= d\omega_{2}=0$. Now, $d\omega_{1} = \sum\limits_{i} d\omega_{ii} (dx_{i})^{(2)} =0$, therefore, $d\omega_{ii}=0$.
\end{proof}

 Symmetric $2$-form $\omega\in S\Omega^{2}(\mathscr{F})$ is called \textit{non-alternate} if there exists $D\in W(\mathscr{F})$ such that $\omega(D,D)\neq 0$. Let
$$\omega= \sum_{i=1}^{n}\omega_{ii} (dx_{i})^{(2)} + \sum_{i<j}\omega_{ij}dx_{i}dx_{j}$$
be a non-alternating $2$ - form. For $j>i$ put $\omega_{ji}= \omega_{ij}$, $M=(\omega_{ij})$ is the matrix of $\omega$. The form $\omega$ is called \textit{nondegenerate} if $\det\omega = \det M$ is invertible in $O(\mathscr{F})$.
Let $a_{ij}= \omega_{ij}(0) = \omega_{ij}(mod \mathfrak{m})\in O(\mathscr{F})\big/\mathfrak{m}= K$, $\omega(0)= \sum\limits_{i=1}^{n}a_{ii}(dx_{i})^{(2)} + \sum\limits_{i<j}a_{ij}dx_{i}dx_{j}$. Obviously, the form $\omega$ is nondegenerate if and only if $\det\omega(0) = \det M(0)\neq 0$. Throughout what follows, we will denote $(\overline{\omega}_{ij})= M^{-1}$, $(\overline{a}_{ij})= M^{-1} (0)$.

The closed nondegenerate non-alternating form $\omega\in s\Omega^{2}(\mathscr{F})$ is called the \textit{non-alternating Hamiltonian} form. The corresponding Lie algebra of Hamiltonian vector fields is denoted by $\widetilde{P}(\mathscr{F}, \omega)$,
$$\widetilde{P} (\mathscr{F}, \omega)= \{ D\in W(\mathscr{F}) ~|~ D\omega=0 \}.$$

It follows from \eqref{eqeq 0.6} that ~$D\in \widetilde{P}(\mathscr{F}, \omega)$~ if and only if ~$d(D\lrcorner \omega)=0$,~ i.e. ~$D\lrcorner \omega\in Z^{1}(S\Omega^{1}(\mathscr{F})) = Z^{1}(\Omega (mathscr{F}))$. By Theorem~\ref{ter 0.1} $D\lrcorner \omega= df$, $f\in O(\mathscr{F})+ \langle x_{i}^{(2^{m_{i}})}, ~i= 1,\ldots, n \rangle$. Whence 
$$D= D_{f}=\sum_{i, j}\overline{\omega}_{ij} \partial_{j}f\partial_{i}.$$
From the formula \eqref{eqeq 0.5} we obtain that
$$[D_{f}, D_{g}]\lrcorner \omega= D_{f}(D_{g}\lrcorner \omega) = D_{f}dg = d(D_{f}(g)).$$

Thus, the correspondence $f\mapsto D_{f}$ is an isomorphism of the Lie algebra $\widetilde{P}(\mathscr{F}, \omega)$ and the Lie algebra $\widetilde{O}(\mathscr{F})\big/ K$ with Poisson bracket
\begin{equation}\label{eqeq 0.7}
	\{ f, g \} = D_{f} (g) = D_{g} (f)= \sum_{i,j}\overline{\omega}_{ij} \partial_{i}f\partial_{j}g.
\end{equation}

Let $$P(\mathscr{F}, \omega)=\{ D_{f} ~|~ f\in O(\mathscr{F})\big/ K \}.$$

It follows from \eqref{eqeq 0.7} that $P (\mathscr{F}, \omega)$ is the ideal of $\widetilde{P} (\mathscr{F}, \omega)$ of codimension $n$. A Lie algebra $\mathscr{L}$ such that $P^{(1)} (\mathscr{F}, \omega)\subseteq  \mathscr{L} \subseteq \widetilde{P}(\mathscr{F}, \omega)$ will be called the non-alternating Hamiltonian Lie algebra.

\begin{remark}
	Poincar\'e algebra $\widehat{O} (\mathscr{F})$ (resp. $O (\mathscr{F})$) with Poisson bracket \eqref{eqeq 0.7} is a Leibniz algebra with center $K$, but not a Lie algebra.
\end{remark}

\section{Canonical form of bilinear non-alternating \\ symmetric forms with respect to the flag}

Let $V$ be a finite-dimensional vector space over the field $K$ of an even characteristic. $b$~ bilinear nondegenerate non-alternating symmetric form on $V$, $V^{0}$ the hyperplane consisting of isotropic vectors (i.e. $V^{0}=\{ v\in V ~|~ b(v,v)=0\} $) and
$\mathscr{F}\colon 0=V_{0}\subseteq V_{1}\subseteq \ldots$  be a flag of $V$ such that $V_{q}=V$ for sufficiently large $q$. For the subspace $L\subseteq V$, denote by $L^{\perp}$ its orthogonal complement with respect to $b$. We recall canonical isomorphisms and pairings constructed in 
~\cite{S}. Let
$$\Phi_{q}\mathscr{F}=V_{q}/V_{q-1} \text{ for } q\geqslant 1,$$
$$\Phi_{q} (\mathscr{F})_{r}=(V_{q}\cap V_{r}^{\perp}+V_{q-1}) {\big /}V_{q-1} \text{ for } q\geqslant 1, r\geqslant 0,$$
$$\Phi_{q}\Phi_{r}\mathscr{F}=\Phi_{q} (\mathscr{F})_{r-1}{\big /}\Phi_{q}(\mathscr{F})_{r} \text{ for } q,r\geqslant 1.$$

There are canonical isomorphisms
\begin{equation*}
\Phi_{q}\Phi_{r}\mathscr{F}\cong (V_{q}\cap V_{r-1}^{\perp}+V_{q-1}){\big/} (V_{q}\cap v_{r}^{\perp}+V_{q-1})\cong 
\end{equation*}
\begin{equation}\label{equ:mul1} 	
\cong (V_{q}\cap V_{r-1}^{\perp}){\big/} (V_{q}\cap V_{r}^{\perp}+V_{q-1}\cap V_{r-1}^{\perp})=\Phi_{qr} 
\end{equation}

The form $b$ induces a non-degenerate pairing of subspaces $U, W\subset V$.
$$U{\big/} (U\cap W^{\bot})\times W{\big/} (U^{\bot}\cap W)\rightarrow K, ~~(u, w)\mapsto b(u, w).$$

Let $L_{1},M_{1}\subseteq V$ and $L_{0}\subseteq L_{1}$, $M_{0}\subseteq M_{1}$ be subspaces of $V$. We construct a pairing $(L_{1}\cap M_{0}^{\bot})\times (L_{0}^{\bot}\cap M_{1})\rightarrow K$. Find the left and right core.
Since $(L\cap M^{\bot})^{\bot}=L^{\bot}+M$, 
\begin{multline*} 
(L_{1}\cap M_{0}^{\bot})^{\bot}\cap (L_{0}^{\bot}\cap M_{1})=(L_{1}^{\bot}+M_{0})\cap (L_{0}^{\bot}\cap M_{1})= \\
=(L_{1}^{\bot}\cap M_{1}+M_{0})\cap L_{0}^{\bot}=L_{1}^{\bot}\cap m_{1}+L_{0}^{\bot}\cap M_{0}
\end{multline*}
and analogously 
$$(L_{1}\cap M_{0}^{\bot})\cap(L_{0}^{\bot}\cap M_{1})^{\bot}=L_{1}\cap M_{1}^{\bot}+L_{0}\cap M_{0}^{\bot}.$$

Hence, $b$ induces a non-degenerate bilinear pairing
$$(L_{1}\cap M_{0}^{\bot}){\big /}(L_{1}\cap M_{1}^{\bot}+L_{0}\cap M_{0}^{\bot})\times(L_{0}^{\bot}\cap M_{1}){\big /}(L_{1}^{\bot}\cap M_{1}+L_{0}^{\bot}\cap M_{0})\rightarrow K.$$

Given isomorphisms of \eqref{equ:mul1}, we obtain non-degenerate bilinear pairings
\begin{equation}\label{equ:spar2}
	\Phi_{qr}\times \Phi_{rq}\rightarrow K, \quad q, r\geqslant 1. 
\end{equation}

 For $q, r\geqslant 1$ define
\begin{center}
	$\Phi_{qr}^{0}$ as the image of $V_{q}\cap V_{r-1}^{\perp}\cap V^{0}$ under the canonical projection on $\Phi_{qr}$, 
	$$ n_{qr}=\dim\Phi_{qr}, ~~~n_{qr}^{1}=\dim\Phi_{qr} - \dim\Phi_{qr}^{0}.$$
\end{center}

Note that $n_{qr}^{1}$ can only be equal to $0$ or $1$.

The decomposition of $V$ into the direct sum of its subspaces $P$ and $Q$ is called coordinated with the flag $\mathscr{F}$ if $V_{j}=V_{j}\cap P+V_{j}\cap Q$ for all $j\geqslant 0$. The number $m= \min\{ j ~ / ~ e\in V_{j}\}$ is the height of an element $e$.

\begin{lemma}[cf. \cite{S}]\label{lem:lem5.1}
{\it Suppose that for some $q, r\geqslant 1$ either $q\neq r$, or $q=r$, $n_{qq}^{1}=0$. If  $u\in V_{q}\cap V_{r-1}^{\bot}$, $v\in V_{r}\cap V_{q-1}^{\bot}$, and $b(u,v)=1$ then the decomposition
	$$V=\langle u, v \rangle \oplus \langle u,v \rangle^{\bot}$$
	coordinated with $\mathscr{F}$ and $b(u,u)\cdot b (v,v)=0$.}
\end{lemma}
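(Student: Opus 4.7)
The plan is to first prove the second assertion $b(u,u)\cdot b(v,v)=0$, since this instantly yields the direct sum decomposition: in characteristic $2$ the Gram determinant of $\langle u,v\rangle$ is $b(u,u)b(v,v)+1$, which equals $1$ once we know the product vanishes, so $\langle u,v\rangle$ is a nondegenerate plane and $V=\langle u,v\rangle\oplus\langle u,v\rangle^{\perp}$. After that, only the coordination with $\mathscr{F}$ remains to verify.

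For the isotropy claim I split into two cases. If $q\neq r$, say $q<r$ without loss of generality, then $u\in V_q\subseteq V_{r-1}$ and simultaneously $u\in V_{r-1}^{\perp}$, so $b(u,u)=0$ at once. If $q=r$, I would invoke $n_{qq}^1=0$. The key preliminary observation is that both $V_q\cap V_q^{\perp}$ and $V_{q-1}\cap V_{q-1}^{\perp}$ already lie in $V^0$ (any self-orthogonal subspace is isotropic), so the ``denominator'' in the definition of $\Phi_{qq}$ is contained in $V^0$. The condition $n_{qq}^1=0$ means $\Phi_{qq}^0=\Phi_{qq}$, i.e.
$$V_q\cap V_{q-1}^{\perp}=\bigl(V_q\cap V_{q-1}^{\perp}\cap V^0\bigr)+\bigl(V_q\cap V_q^{\perp}+V_{q-1}\cap V_{q-1}^{\perp}\bigr),$$
and combining these gives $V_q\cap V_{q-1}^{\perp}\subseteq V^0$; in particular $b(u,u)=b(v,v)=0$.

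For coordination, given $w\in V_j$ write $w=c_1 u+c_2 v+w'$ with $w'\in\langle u,v\rangle^{\perp}$. Setting $a=b(u,u)$, $d=b(v,v)$, the pairings $b(w,u)=c_1 a+c_2$ and $b(w,v)=c_1+c_2 d$ together with $ad=0$ give $c_1=b(w,v)+d\,b(w,u)$ and $c_2=b(w,u)+a\,b(w,v)$. Verifying $c_1 u+c_2 v\in V_j$ then reduces to case analysis on $j$ relative to $\min(q,r)$ and $\max(q,r)$: for $j<\min(q,r)$ we have $w\in V_{q-1}\cap V_{r-1}$, so both pairings vanish by the orthogonality conditions on $u,v$, forcing $c_1=c_2=0$; for $j\geqslant\max(q,r)$ both $u$ and $v$ belong to $V_j$; in the intermediate range (which arises only when $q\neq r$) one pairing vanishes for flag reasons while the surviving term is of the form $\lambda u$ or $\lambda v$ with the corresponding vector already in $V_j$.

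The main obstacle is translating the numerical hypothesis $n_{qq}^1=0$ into the honest set-theoretic inclusion $V_q\cap V_{q-1}^{\perp}\subseteq V^0$; the decisive ingredient is the elementary but easy-to-overlook fact that the radicals of $b$ on $V_q$ and on $V_{q-1}$ are automatically isotropic, so the ``class-wise'' isotropy provided by $\Phi_{qq}^0=\Phi_{qq}$ upgrades to genuine isotropy of every representative. Once that is in hand, the rest of the argument is mechanical bookkeeping with the flag.
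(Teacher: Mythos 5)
Your proof is correct and follows essentially the same route as the paper's: the same case split on $j$ relative to $q$ and $r$, and the same two isotropy facts ($b(u,u)=0$ when $q<r$ because $u\in V_{r-1}\cap V_{r-1}^{\perp}$, and $V_q\cap V_{q-1}^{\perp}\subseteq V^0$ when $q=r$, $n_{qq}^{1}=0$). The only difference is organizational: you extract the isotropy claims first and justify the direct sum via the Gram determinant, and you spell out a step the paper merely asserts, namely that $n_{qq}^{1}=0$ upgrades to genuine isotropy of every vector of $V_q\cap V_{q-1}^{\perp}$ because $V^0$ is a subspace in characteristic $2$ containing the radicals $V_q\cap V_q^{\perp}$ and $V_{q-1}\cap V_{q-1}^{\perp}$.
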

\begin{proof}
	(1) Assume for certainty that $q<r$. 
	
	(1.1) If $j<q$, then $j\leqslant q-1 < r-1$ and $V_{j}\subseteq V_{q-1}$, $V_{j}\subseteq V_{r-1}$. Then $V_{j}\subseteq u^{\bot}$ and $V_{j}\subseteq v^{\bot}$, hence $V_{j}\subseteq \langle u, v \rangle^{\bot}$. 
	
	(1.2) If $j\geqslant r$, then $V_{q},V_{r}\subseteq V_{j}$, then $\langle u,v \rangle \subseteq V_{j}$ and  $V_{j}=V_{j}\cap V=V_{j}\cap(\langle u,v \rangle + \\ + \langle u,v \rangle^{\bot})=\langle u,v \rangle + \langle u,v \rangle^{\bot}\cap V_{j}.$
(1.3) Let $q\leqslant j<r$. Then $V_{q}\subseteq V_{j}\subseteq V_{r-1}$, hence $u\in V_{j}\cap V_{j}^{\bot}$, i.e. $b(u, u)=0$. Since $b(u,v)=1$, then $v\notin V_{j}$ and for any $u'\in V_{j}$ such that $b (u', v)=\alpha$ we have $b (\alpha u+u', v)=0$. Let's denote $w=\alpha u+u'$ and get the decomposition $u ' =\alpha u+w$, i.e. 
	$$V_{j}=\langle u \rangle + \{ w\in V_{j} ~|~ b(w,v)=0\}=\langle u,v \rangle\cap V_{j} + \langle u,v \rangle ^{\bot}\cap V_{j}.$$
	
	(2) for $q=r$ we should consider only cases similar to (1.1) and (1.2). Since $n_{qq}^{1}=0$, the set $V_{q}\cap V_{q-1}^{\bot}$ does not contain a vector of nonzero length and therefore for $u, v \in v_{q}\cap V_{q-1}^{\bot}$, $b(u,u)=0$ and $b(v, v)=0$.
\end{proof}

\begin{lemma}\label{lem:lem5.2}
{\it Let $u\in V_{q}\cap V_{q-1}^{\bot}$ be given For some $q\geqslant 1$, with $b (u,u)=1$. Then decomposition
	$$V=\langle u \rangle \oplus \langle u \rangle^{\bot}$$
	coordinated with $\mathscr{F}$.}
\end{lemma}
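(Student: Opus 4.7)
The plan is to mimic the two-part case analysis from Lemma~\ref{lem:lem5.1}, but simpler, since now a single vector $u$ (rather than a hyperbolic pair) generates the summand complementary to $\langle u\rangle^\bot$. First I would observe that $b(u,u)=1\neq 0$ gives $u\notin \langle u\rangle^\bot$, so by nondegeneracy of $b$ the sum $V=\langle u\rangle\oplus\langle u\rangle^\bot$ is indeed a direct-sum decomposition of $V$. The content of the lemma is therefore the compatibility with each $V_j$, namely $V_j=V_j\cap\langle u\rangle+V_j\cap\langle u\rangle^\bot$.

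I would split into the cases $j\leqslant q-1$ and $j\geqslant q$. In the first case, $V_j\subseteq V_{q-1}$; since $u\in V_{q-1}^\bot$ this gives $V_j\subseteq u^\bot=\langle u\rangle^\bot$, whence $V_j\cap\langle u\rangle=0$ and $V_j=V_j\cap\langle u\rangle^\bot$ trivially. In the second case, $u\in V_q\subseteq V_j$, so $\langle u\rangle\subseteq V_j$; for an arbitrary $v\in V_j$ set $\alpha=b(v,u)\in K$ and write $v=\alpha u+(v+\alpha u)$ (in characteristic $2$). Then $\alpha u\in V_j\cap\langle u\rangle$, while $v+\alpha u\in V_j$ and, using $b(u,u)=1$,
$$b(v+\alpha u,u)=b(v,u)+\alpha\, b(u,u)=\alpha+\alpha=0,$$
so $v+\alpha u\in V_j\cap\langle u\rangle^\bot$. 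This exhibits the required decomposition of $V_j$.

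There is no real obstacle here: the only subtle point compared with Lemma~\ref{lem:lem5.1} is the absence of an intermediate range $q\leqslant j<r$ (which was the source of the isotropy conclusion $b(u,u)\cdot b(v,v)=0$ in the previous lemma). Because $q=r$ and $u$ itself has nonzero length, the "projection onto $u$" step uses $b(u,u)=1$ directly and no separate argument for $b(u,u)=0$ is needed.
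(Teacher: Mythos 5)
Your proof is correct and follows essentially the same two-case argument as the paper: for $j<q$ one has $V_j\subseteq V_{q-1}\subseteq\langle u\rangle^{\bot}$, and for $j\geqslant q$ one uses $u\in V_j$ together with $b(u,u)=1$ to split any $v\in V_j$ off along $u$. The only cosmetic difference is that you write out the explicit projection $v=\alpha u+(v+\alpha u)$ where the paper invokes the modular law $V_j\cap(\langle u\rangle+\langle u\rangle^{\bot})=\langle u\rangle+V_j\cap\langle u\rangle^{\bot}$; these are the same computation.
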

\begin{proof}
	If $j<q$, then $V_{j}\subseteq V_{q-1}$ and $V_{j}\subseteq u^{\bot}$. 	
	If $j\geqslant r$, then $V_{q}\subseteq V_{j}$ and $\langle u \rangle \subseteq V_{j}$. Now, $v_{j}=V_{j}\cap V=V_{j}\cap(\langle u \rangle + \langle u \rangle^{\bot})=\langle u \rangle + \langle u \ rangle^{\bot}\cap V_{j}$
\end{proof}

Consider triples $(V, \mathscr{F}, b)$ and call $(V, \mathscr{F}, b)$ and $(V', \mathscr{F}', b')$ \textit{equivalent} if there is an isomorphism $V\rightarrow V'$ that translates the flag $\mathscr{F}$ onto $\mathscr{F}'$, and the form $b$ into $b'$. Such isomorphism induces isomorphisms $\Phi_{qr}\rightarrow \Phi'_{qr}$ for all $q, r\geqslant 1$. Therefore, the values $n_{qr}$ and $n_{qr}^{1}$ are invariants of the triple $(V, \mathscr{F}, b)$.

\begin{theorem}\label{lem5.2}
{\it (1) There is coordinated with the flag $\mathscr{F}$ a basis of the space $V$ relative to which the matrix of non-alternatig form $b$ has the form $diag(M_{0},\ldots, M_{0}, M_{1} ,\ldots, M_{1}, 1_{s})$, where $s=\sum\limits_{q=1}^{n} n_{qq}n_{qq}^{1}$, the number of matrices $M_{1}$ is $\sum\limits_{q<r}^{n} n_{qr}n_{rq}^{1}$,
	$$M_{0}=\begin{pmatrix} 0 & 1 \\ 1 & 0\end{pmatrix} \text{ and } M_{1}=\begin{pmatrix} 0 & 1 \\ 1 & 1\end{pmatrix}.$$
	(2) Triples $(V, \mathscr{F}, b)$ and $(V', \mathscr{F}', b')$ are equivalent if and only if they have the same invariants $n_{qr}$ and $n_{qr}^{1}$ for all $q, r\geqslant 1$.}
\end{theorem}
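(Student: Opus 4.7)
The overall strategy is an induction on $\dim V$: at each step, Lemmas~\ref{lem:lem5.1} and~\ref{lem:lem5.2} let us split off a one- or two-dimensional summand orthogonal to the rest and coordinated with $\mathscr{F}$, after which the induction hypothesis applies to the orthogonal complement equipped with the induced flag. Part~(2) then follows from~(1): the numbers $n_{qr}$ and $n_{qr}^{1}$ are manifestly invariants of the equivalence class of $(V, \mathscr{F}, b)$ (the construction of the $\Phi_{qr}$ commutes with any isomorphism), so an equivalence of triples preserves them; conversely, two triples with the same invariants are each equivalent to the same canonical matrix by~(1), and hence to each other.

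For the inductive step I begin by hunting for an index at which to peel off a summand. If some $n_{qq}^{1}=1$, then $V_{q}\cap V_{q-1}^{\perp}$ contains a non-isotropic vector $u$; because $K$ is perfect of characteristic~$2$, rescaling $u$ by $b(u,u)^{-1/2}$ achieves $b(u,u)=1$, and Lemma~\ref{lem:lem5.2} produces the decomposition $V=\langle u\rangle\oplus\langle u\rangle^{\perp}$ coordinated with $\mathscr{F}$, contributing a diagonal block $(1)$. Otherwise, I choose $(q,r)$ with $\Phi_{qr}\neq 0$; the non-degenerate pairing $\Phi_{qr}\times\Phi_{rq}\to K$ of~\eqref{equ:spar2} furnishes representatives $u\in V_{q}\cap V_{r-1}^{\perp}$ and $v\in V_{r}\cap V_{q-1}^{\perp}$ with $b(u,v)=1$, and the hypotheses of Lemma~\ref{lem:lem5.1} hold (either $q<r$, or $q=r$ with $n_{qq}^{1}=0$), yielding the orthogonal splitting $V=\langle u,v\rangle\oplus\langle u,v\rangle^{\perp}$ coordinated with $\mathscr{F}$, together with the crucial identity $b(u,u)\,b(v,v)=0$. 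If both diagonal entries vanish, the resulting block is $M_{0}$; otherwise, say $b(v,v)=\alpha\neq 0$ and $b(u,u)=0$, and the substitution $u\mapsto\alpha^{1/2}u$, $v\mapsto\alpha^{-1/2}v$ (valid because $K$ is perfect) preserves the subspace $\langle u,v\rangle$ and the normalisation $b(u,v)=1$, while converting the block to $M_{1}$.

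The main obstacle will be organising these local choices so that, once $V$ is exhausted, the totals of $M_{0}$, $M_{1}$ and $(1)$ blocks match the prescribed formulas. I would process each pair $(q,r)$ in a single batch: pick dual bases of $\Phi_{qr}$ and $\Phi_{rq}$ adapted to the codimension-at-most-one isotropic subspaces $\Phi_{qr}^{0}$ and $\Phi_{rq}^{0}$, and pair vectors of matching isotropy type. Non-degeneracy of the induced pairing between the isotropic and non-isotropic parts then forces exactly $\sum_{q<r}n_{qr}n_{rq}^{1}$ of the resulting two-dimensional blocks to be of type $M_{1}$, while the remaining two-dimensional blocks are of type $M_{0}$; at the $q=r$ level, the vectors extracted in the first case account for the $s=\sum_{q}n_{qq}n_{qq}^{1}$ copies of $(1)$. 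A parallel bookkeeping verification shows that at each step the invariants of the orthogonal complement equal the original invariants minus the contribution of the peeled summand, which is what closes the induction and completes~(1); statement~(2) then drops out as described above.
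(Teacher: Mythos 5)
Your strategy for part (1) is essentially the paper's: peel off one- or two-dimensional orthogonal summands via Lemmas~\ref{lem:lem5.1} and~\ref{lem:lem5.2} and recurse on the orthogonal complement with the induced flag (the paper phrases this as an algorithm rather than an induction, and leaves the square-root normalisations implicit). However, your claim that non-degeneracy ``forces exactly $\sum_{q<r}n_{qr}n_{rq}^{1}$'' blocks of type $M_{1}$ is not correct: the number of $M_{1}$ blocks produced by the peeling is not determined by the triple but by your choices. For instance, with $q<r$, $n_{qr}=2$, $n_{rq}^{1}=1$, the forms $\mathrm{diag}(M_{0},M_{1})$ and $\mathrm{diag}(M_{1},M_{1})$ with heights $(q,r,q,r)$ are equivalent and have identical invariants, and which matrix you land on depends on whether the second vector $v_{2}\in V_{r}\cap V_{q-1}^{\bot}$ is taken inside or outside the isotropic hyperplane. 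Since $n_{qr}n_{rq}^{1}=n_{qr}$ when $n_{rq}^{1}=1$, the statement requires \emph{all} blocks of such a pair to be $M_{1}$; to achieve this you must choose every $v_{i}$ outside $\Phi_{rq}^{0}$, whereas a basis ``adapted to'' $\Phi_{rq}^{0}$, as you propose, contains exactly one non-isotropic vector and hence yields only one $M_{1}$ block per pair. What your bookkeeping would produce is an equivalent but literally different normal form from the one in the statement.

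The more substantive gap is in part (2). ``Each equivalent to the same canonical matrix, hence to each other'' does not suffice: an equivalence of triples must carry $\mathscr{F}$ to $\mathscr{F}'$, and identifying the two canonical bases entry by entry preserves $b$ but preserves the flag only if corresponding basis vectors have equal heights. The canonical matrix alone does not record the heights, so you must additionally show that the invariants determine, for each pair $(q,r)$, how many blocks consist of vectors of heights $q$ and $r$, and then reorder one basis by a block-respecting permutation so that the heights agree. This is precisely what occupies the paper's proof of (2): the classes of the basis vectors indexed by $I_{qr}$ form a basis of $\Phi_{qr}$, so $|I_{qr}|=n_{qr}=|I'_{qr}|$, and one builds a permutation $\pi$ with $\pi\tilde{i}=\widetilde{\pi i}$ and $m'_{\pi i}=m_{i}$ before mapping $e_{i}\mapsto e'_{\pi i}$. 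Your closing remark about bookkeeping points in this direction, but the height-matching step is the actual content of the ``if'' direction of (2) and needs to be carried out explicitly.
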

\begin{proof}
	(1) Since the form is nondegenerate, there is $n_{qr}>0$ for some $1\leqslant q\leqslant r$. 
	
    \underline{Step 1.} Among $n_{qr}>0$ choose the one for which $q<r$ or $q=r$, $n_{qq}^{1}=0$. Due to the nondegeneracy of the pairing \eqref{equ:spar2}, there are $u, v$ that satisfy the conditions of the Lemma~\ref{lem:lem5.1}. In this case, if $n_{qr}>2$ and $~n_{rq}^{1}=1$, then take $v$ such that $b(v, v)=0$. We include elements $u,v$ in the basis. Let's put $W=\langle u, v \rangle^{\bot}$ and define the flag $\mathscr{H}$ of subspaces in $W$, assuming $W_{j}=W\cap V_{j}$ for $j\geqslant 0$. Repeat Step 1 for $W$, $H$, and ${b}\vert_{W}$ until $n_{qr}>0$ meet the condition. 
	
	\underline{Step 2.} For the remaining $n_{qq}>0$, $n_{qq}^{1}=1$ is fulfilled. Due to the nondegeneracy of the pairing \eqref{equ:spar2}, there is $u$ that satisfies the conditions of the Lemma~\ref{lem:lem5.2}. Include in the basis element $u$. Let's put $W=\langle u \rangle^{\bot}$ and define the flag $\mathscr{H}$ of subspaces in $W$, assuming $W_{j}=W\cap V_{j}$ for $j\geqslant 0$. Repeat Step 2 for $W$, $\mathscr{H}$, and $b\vert_{W}$. 
	
	\underline{Step 3.} Enumerate the basis so that the shape matrix takes the form specified in the condition.
	
	(2) Assume that the listed invariants of two triples coincide. Then the dimensions of the spaces $V$, $V'$ are equal and $n=\sum n_{qr}$. We have bases $\{e_{1},\ldots, e_{n}\}$ in $V$ and $\{e'_{1},\ldots, e'_{n}\}$ in $V'$, as in (1). Let $m_{i}$ ($m'_{i}$, respectively) be the height of the element $e_{i}$ ($e'_{i}$, respectively). Let's put for $q\neq r\geqslant1$ and $2t=n-\sum\limits_{q=1}^{n} n_{qq}n_{qq}^{1}$ 
	$$ \tilde{i}=\begin{cases} i+1, \text{ if } i=2k-1 \ \ i - 1, \text{ if } i=2k \end{cases}$$
	$$I_{qr}=\{ i ~ / ~ 1\leqslant i\leqslant 2t, ~m_{i}=q, ~m_{\tilde{i}}=r \}, $$
	$$I'_{qr}=\{ i ~ / ~ 1\leqslant i\leqslant 2t, ~m'_{i}=q, ~m'_{\tilde{i}}=r \}, $$
	$$I_{qq}=\{ i ~ / ~ 2t+1\leqslant i\leqslant n, ~~m_{i}=q\}, $$
	$$I'_{qq}=\{ i ~ / ~ 2t+1\leqslant i\leqslant n, ~~m'_{i}=q \}.$$
	
	Classes of vectors $e_{i}$, $i\in I_{qr}$ (possibly $q=r$) form the basis $\Phi_{qr}$. So $I_{qr}$ consists of $n_{qr}$ indexes and the same is true for $I'_{qr}$. Similarly, the sets $I_{qq}$ and $I'_{qq}$ consist of the same number of $n_{qq}$ indexes. Note also that $\tilde{i}\in I_{rq}$ for $i\in I_{qr}$. All of the above allows us to construct a permutation $\pi$ of indices $1, \ldots, n$, which maps each $I_{qr}$ to $I'_{qr}$, $q, r\geqslant1$, and for which $\pi \tilde{i}= \widetilde{\pi i}$ at $1\leqslant i\leqslant 2t$. Then $m'_{\pi i}= m_{i}$ for all $i$. The linear isomorphism $V\rightarrow V'$, which translates $e_{i}$ to $e '_{\pi i}$, $1\leqslant i\leqslant n$, specifies the equivalence of triples $(V, \mathscr{F}, b)$ and $(V', \mathscr{F}', b')$.
\end{proof}

\begin{remark}
\noindent 1.	From the proof of Lemma~\ref{lem:lem5.1} it follows that for the pair $(u,v)$ corresponding to the matrix $M_{1}$, the height of $u$ is less than the height of $v$.\newline
\noindent 2.  In the case where $p>2$, the canonical form of a matrix of symmetric bilinear form
contains no blocks $M_1$, and $ 1_s $ is replaced by a diagonal matrix (see ~\cite{HP}, Theorem 1, Section 2, Chapter 9).
\end{remark}

\begin{remark}
	The invariants mutually uniquely define a matrix of canonical form and a set of heights. For a triple $(V, \mathscr{F}, b)$ with invariants $n_{qr}$ and $n_{qr}^{1}$ ($q, r\geqslant 1$), the height set consists of three groups: $M_{0}$, $M_{1}$ and $1_{s}$, respectively. For $q\leqslant r$ in the first group of pair $q, r$ is included in the amount of $(1-n_{rq}^{1})n_{qr}$. For $q<r$ in the second group of the pair $q, r$ is in the amount of $n_{qr}n_{rq}^{1}$. The third group includes the height $q$ in the amount of $n_{qq}n_{qq}^{1}$.
\end{remark}

In the following examples, the differential form with constant coefficients is considered as a bilinear form on the space $V=W_{-1}$.

\textbf{Example 1.} Let $n=3$, $\mathscr{F}\colon V=V_{3}=\langle \partial_{1}, \partial_{2}, \partial_{3} \rangle \supset V_{2}=\langle \partial_{1}, \partial_{2} \rangle \supset V_{1}=\langle \partial_{1} \rangle \supset V_{0}=0$ and a coherent $\mathscr{F}$ basis form has the form $\omega=dx_{1}^{(2)} +dx_{1}dx_{3}+dx_{2}dx_{3}$. \\
We have $V^{0}= \langle \partial_{2}, \partial_{3} \rangle$ and $V_{1}^{\bot}= \langle \partial_{2}, \partial_{1}+\partial_{3} \rangle$. 
We apply the algorithm specified in the proof of Theorem~\ref{lem5.2}: \\
$\Phi_{11}=(V_{1}\cap V_{0}^{\bot}) {\big/} (V_{1}\cap V_{1}^{\bot}+ V_{0}\cap V_{0}^{\bot})= V_{1}$, ~~$V_{1}\cap V_{0}^{\perp}\cap V^{0}=0 \Rightarrow \Phi_{11}^{0}=0 \Rightarrow n_{11}^{1}=1$; \\
$\Phi_{12}=(V_{1}\cap V_{1}^{\bot}) {\big/} (V_{1}\cap V_{2}^{\bot}+ V_{0}\cap V_{1}^{\bot})= 0$; \\
$\Phi_{13}=(V_{1}\cap V_{2}^{\bot}){\big/} (V_{1}\cap V_{3}^{\bot}+ V_{0}\cap V_{2}^{\bot})= 0$; \\
$\Phi_{21}=(V_{2}\cap V_{0}^{\bot}){\big/} (V_{2}\cap V_{1}^{\bot}+ V_{1}\cap V_{0}^{\bot})= 0$; \\
$\Phi_{22}=(V_{2}\cap V_{1}^{\bot}){\big/} (V_{2}\cap V_{2}^{\bot}+ V_{1}\cap V_{1}^{\bot})= 0$; \\
$\Phi_{23}=(V_{2}\cap V_{2}^{\bot}) {\big/} (V_{2}\cap V_{3}^{\bot}+ V_{1}\cap V_{2}^{\bot})= \langle \partial_{2} \rangle$, ~$V_{2}\cap V_{2}^{\perp} \cap V^{0}=\langle \partial_{2} \rangle \Rightarrow \Phi_{23}^{0}=\Phi_{23} \Rightarrow n_{23}^{1}=0$ \\
$\Phi_{31}=(V_{3}\cap V_{0}^{\bot}){\big/} (V_{3}\cap V_{1}^{\bot}+ V_{2}\cap V_{0}^{\bot})= 0$; \\
$\Phi_{32}=(V_{3}\cap V_{1}^{\bot}){\big /}(V_{3}\cap V_{2}^{\bot}+ V_{2}\cap V_{1}^{\bot})= \langle \partial_{1}+\partial_{3} \rangle$; $V_{3}\cap V_{1}^{\perp}\cap V^{0}=\langle \partial_{2} \rangle \Rightarrow \Phi_{32}^{0}=0 \Rightarrow n_{32}^{1}=1$; \\
$\Phi_{33}=(V_{3}\cap V_{2}^{\bot}){\big/} (V_{3}\cap V_{3}^{\bot}+ V_{2}\cap V_{2}^{\bot})= 0$.\\

We see that $n_{23}>0$. Since $n_{32}^{1}=1$, the pair $(\partial_{2}, \partial_{1}+\partial_{3})$ corresponds to the matrix $M_{1}$. We include these vectors in the basis. Step 1 completed. \\
We see that $n_{11}>0$ and $n_{11}^{1}=1$. Include the vector $\partial_{1}$ in the basis.\\
As a result, $\omega=dx_{1}dx_{2}+dx_{2}^{(2)}+dx_{3}^{(2)}$. Set of heights: $(2,3,1)$.

\textbf{Example 2.} Let $\mathscr{F}\colon V=V_{4} \supset V_{3} = V_{2} = V_{1} \supset V_{0}=0$ and $V_{4} {\big /} V_{3}= \langle \partial_{1} \rangle$, in some coordinated with the flag, the form matrix has the form of $\left( \begin{array}{ccc} 1 & 0 & 0 \\ 0 & 1 & 0 \\ 0 & 0 & B \end{array} \right)$, where $B$ is a non-degenerate skew-symmetric matrix. 
Then $V^{0}= \langle \partial_{1}+ \partial_{2}, \partial_{3}, \ldots, \partial_{n} \rangle$ and \\
$\Phi_{12}=(V_{1}\cap V_{1}^{\bot}) {\big/} (V_{1}\cap V_{2}^{\bot}+ V_{0}\cap V_{1}^{\bot})= \Phi_{21}=(V_{2}\cap V_{0}^{\bot}){\big/} (V_{2}\cap V_{1}^{\bot}+ V_{1}\cap V_{0}^{\bot})= 0$; \\
$\Phi_{13}=(V_{1}\cap V_{2}^{\bot}) {\big/} (V_{1}\cap V_{3}^{\bot}+ V_{0}\cap V_{2}^{\bot})= \Phi_{31}=(V_{3}\cap V_{0}^{\bot}){\big/} (V_{3}\cap V_{1}^{\bot}+ V_{2}\cap V_{0}^{\bot})= 0$; \\
$\Phi_{14}=(V_{1}\cap V_{3}^{\bot}) {\big/} (V_{1}\cap V_{4}^{\bot}+ V_{0}\cap V_{3}^{\bot})= \Phi_{41}=(V_{4}\cap V_{0}^{\bot}) {\big/} (V_{4}\cap V_{1}^{\bot}+ V_{4}\cap V_{0}^{\bot})= 0$; \\
$\Phi_{23}=(V_{2}\cap V_{2}^{\bot}) {\big/} (V_{2}\cap V_{3}^{\bot}+ V_{1}\cap V_{2}^{\bot})= \Phi_{32}=(V_{3}\cap V_{1}^{\bot}){\big/} (V_{3}\cap V_{2}^{\bot}+ V_{2}\cap V_{1}^{\bot})= 0$; \\
$\Phi_{24}=(V_{2}\cap V_{3}^{\bot}) {\big/} (V_{2}\cap V_{4}^{\bot}+ V_{1}\cap V_{3}^{\bot})= \Phi_{42}=(V_{4}\cap V_{1}^{\bot}) {\big/} (V_{4}\cap V_{2}^{\bot}+ V_{3}\cap V_{1}^{\bot})= 0$; \\
$\Phi_{34}=(V_{3}\cap V_{3}^{\bot}) {\big/} (V_{3}\cap V_{4}^{\bot}+ V_{2}\cap V_{3}^{\bot})= \Phi_{43}=(V_{4}\cap V_{2}^{\bot}) {\big/} (V_{4}\cap V_{3}^{\bot}+ V_{3}\cap V_{2}^{\bot})= 0$; \\
$\Phi_{11}=(V_{1}\cap V_{0}^{\bot}){\big/} (V_{1}\cap V_{1}^{\bot}+ V_{0}\cap V_{0}^{\bot})= V_{1}$, ~~~$\Phi_{11}^{0}=\langle \partial_{3}, \ldots, \partial_{n} \rangle$, ~$n_{11}^{1}=1$; \\
$\Phi_{22}=(V_{2}\cap V_{1}^{\bot}) {\big/} (V_{2} \cap V_{2}^{\bot}+ V_{1}\cap V_{1}^{\bot})= \Phi_{33}=(V_{3}\cap V_{2}^{\bot}) {\big/} (V_{3}\cap V_{3}^{\bot}+ V_{2}\cap V_{2}^{\bot})= 0$; \\
$\Phi_{44}=(V_{4}\cap V_{3}^{\bot}) {\big/} (V_{4}\cap V_{4}^{\bot}+ V_{3}\cap V_{3}^{\bot})= \langle \partial_{1} \rangle$, ~$\Phi_{22}^{0}=0$, ~$n_{22}^{1}=1$; \\
As a result, $\omega=dx_{1}^{(2)}+ \ldots +dx_{n}^{(2)}$. Set of heights: $(3, \ldots, 3, 4)$.

\section{Filtered deformations of graded \\ non-alternating Hamiltonian Lie algebras}

The filtered Lie algebra $\mathscr{L}=\mathscr{L}_{-q}\supset \ldots \supset \mathscr{L}_{-1}\supset \mathscr{L}_{0} \supset \mathscr{L}_{1}\supset \ldots \mathscr{L}_s \supset 0$ is called the filtered deformation of the graded Lie algebra $L =\text{gr}\,\mathscr{L}$. ~Let ~$P^{(1)}(\mathscr{F}, \omega_{0}) \subseteq$  $ \subseteq L \subseteq \widetilde{P}(\mathscr{F}, \omega_{0})$ be the graded non-alternating  Hamiltonian Lie algebra, corresponding to the form $\omega$ with constant coefficients, $\mathscr{F}\colon E=E_{0}\supseteq E_{1}\supseteq \ldots \supseteq E_{r}\supset E_{r+1}=0$, $W=W(\mathscr{F})$, $V=L_{-1}= W_{-1}\cong W(\mathscr{F}){\big /}W(\mathscr{F})_{(0)}$, $E=V^{\ast}$.

According to Embedding Theorem ~\cite{K} there is a minimal embedding  $\tau\colon (\mathscr{L} \mathscr{L}_{0})\rightarrow (W(\mathscr{F}'), W(\mathscr{F}')_{(0)})$ with the minimal flag $\mathscr{F}'= \mathscr{F}(\mathscr{L}, \mathscr{L}_{0})$. The embedding $\tau$ is uniquely determined up to the automorphism of the pair $(W(\mathscr{F}'), W (\mathscr{F}')_{(0)})$. Such automorphisms are induced by admissible automorphisms of the algebra $O (\mathscr{F}')$. In particular, $\mathscr{F} (L, L_ {(0)})= \mathscr{F}$. It is known that $\mathscr{F}\leqslant \mathscr{F}'$ (~\cite{K}). The following theorem is a special case of the theorem proved in ~\cite{K}, ~\cite{K1}

\begin{theorem}\label{filt 1}
{\it Let $L= L_{-1}+ L_{0}+ \ldots $ be a transitive graded Lie algebra. If 
	\begin{itemize}
		\item[$I.~~$] $L_{-1}$ is an irreducible $L_{0}$ - module,
		\item[$ii.~$] $H^{1} (L_{0}, L_{-1})= 0$,
		\item[$iii.$] $\text{mtp} (L_{-1}, X(L_ {(1)})\leqslant m (\mathscr{F} (L, L_ {(0)})) - n$, $n=\dim L_{-1}$,
	\end{itemize}
	then for any filtered deformation of $\mathscr{L}$ of Lie algebra $L$ 
	\begin{itemize}
		\item [$(a)$] $\mathscr{F} (\mathscr{L}, \mathscr{L}_{0})= \mathscr{F} (L, L_ {(0)})$,
		\item [$(b)$] $\text{Der}\,\mathscr{L}\cong N_{\overline{W}(\mathscr{F})} (\tau(\mathscr{L})$.
	\end{itemize}
	Here $\tau\colon \mathscr{L}\rightarrow W (\mathscr{F})$ is the minimum embedding, $\overline{W} (\mathscr{F})$ is the $p$ -- closure of $W (\mathscr{F})$ in $\text{Der}\, O (\mathscr{F})$, $X(L_ {(1)})= L_ {(1)} {\big/}[L_ {(1)}, L_{(1)}]$, $\text{mtp}(Q, V)$ is the multiplicity of $L_{0}$-module $Q$ in $L_{0}$-module $V$.} 
	
	\raggedleft $\square$
\end{theorem}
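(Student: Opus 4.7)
The statement is presented as a specialization of the general result of~\cite{K},~\cite{K1}, so the plan is to invoke the Skryabin/Blattner machinery of minimal embeddings and then verify that the hypotheses (i)--(iii) force the minimal flag of $\mathscr{L}$ to agree with the minimal flag of its associated graded algebra. Let $\tau\colon(\mathscr{L},\mathscr{L}_{0})\rightarrow(W(\mathscr{F}'),W(\mathscr{F}')_{(0)})$ be the minimal embedding and set $\mathscr{F}'=\mathscr{F}(\mathscr{L},\mathscr{L}_{0})$. Passing to the associated graded recovers the analogous minimal embedding for $L$, whose flag is $\mathscr{F}=\mathscr{F}(L,L_{(0)})$ by definition, so one has the a priori comparison $\mathscr{F}\leqslant\mathscr{F}'$ recorded in the text just before the theorem.

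For part (a), I would read off the heights of $\mathscr{F}'$ from the composition factors of the $\mathscr{L}_{0}/\mathscr{L}_{(1)}$-module $X(\mathscr{L}_{(1)})=\mathscr{L}_{(1)}/[\mathscr{L}_{(1)},\mathscr{L}_{(1)}]$, and correspondingly for $\mathscr{F}$ from $X(L_{(1)})$. The irreducibility hypothesis (i) identifies the relevant composition factors in both $X(\mathscr{L}_{(1)})$ and $X(L_{(1)})$ with copies of $L_{-1}$; the cohomology vanishing (ii) guarantees that the filtration of $\mathscr{L}$ is compatible with these identifications, so the multiplicity of $L_{-1}$ in $X(\mathscr{L}_{(1)})$ equals that in $X(L_{(1)})$. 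The bound (iii) then prevents $\mathscr{F}'$ from containing any variable of height exceeding the heights already present in $\mathscr{F}$: the number of extra divided-power generators needed would violate $\text{mtp}(L_{-1},X(L_{(1)}))\leqslant m(\mathscr{F})-n$. This forces $\mathscr{F}'=\mathscr{F}$.

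For part (b) I would use the standard normalizer argument. Any $D\in\text{Der}\,\mathscr{L}$ preserves the canonical filtration $\{\mathscr{L}_{i}\}$ (since this filtration is intrinsically determined by $\mathscr{L}_{0}$ and its commutator subalgebras) and hence induces a derivation of $L=\text{gr}\,\mathscr{L}$. Using part (a) and the uniqueness of $\tau$ up to admissible automorphisms of $O(\mathscr{F})$, the derivation $D$ extends to a continuous special derivation of $O(\mathscr{F})$, i.e.\ an element of $\overline{W}(\mathscr{F})$, and by construction this element normalizes $\tau(\mathscr{L})$. Conversely, any element of $N_{\overline{W}(\mathscr{F})}(\tau(\mathscr{L}))$ restricts to a derivation of $\tau(\mathscr{L})\cong\mathscr{L}$; injectivity of the restriction map follows from transitivity of the grading on $L$.

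The main obstacle is the height comparison in (a): the abstract multiplicity condition (iii) has to be translated into the geometric statement that the Blattner--Skryabin coinduced structure of $\mathscr{L}$ (\cite{Blattner},~\cite{K}) produces no extra divided-power generators beyond those already visible in $L$. Hypotheses (i) and (ii) are precisely what is needed to make this translation work -- irreducibility cuts down composition factors to a single isotypic component, while $H^{1}(L_{0},L_{-1})=0$ eliminates the obstruction to lifting splittings from the graded to the filtered setting. Once (a) is in hand, (b) is a routine normalizer computation inside the fixed ambient algebra $\overline{W}(\mathscr{F})$.
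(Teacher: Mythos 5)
First, note that the paper does not actually prove this theorem: it is stated as ``a special case of the theorem proved in \cite{K}, \cite{K1}'' and closed with an empty proof box, so its entire content is imported from the embedding theorem and the theory of truncated coinduced modules. Your proposal correctly identifies that machinery as the relevant framework, but as a standalone proof it has two genuine gaps. For part (a), the whole weight of the argument rests on a quantitative relation between the size $m(\mathscr{F}(\mathscr{L},\mathscr{L}_{0}))$ of the minimal flag of the filtered algebra and the multiplicity of $L_{-1}$ in $X(\mathscr{L}_{(1)})$ --- roughly $m(\mathscr{F}(\mathscr{L},\mathscr{L}_{0})) - n \leqslant \text{mtp}(L_{-1}, X(\mathscr{L}_{(1)})) \leqslant \text{mtp}(L_{-1}, X(L_{(1)}))$ --- which, combined with the a priori comparison $\mathscr{F}(L,L_{(0)})\leqslant\mathscr{F}(\mathscr{L},\mathscr{L}_{0})$ and hypothesis (iii), squeezes the two flags together. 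You instead assert that (i) and (ii) give \emph{equality} of the two multiplicities; this is not justified in your text, and it is not the role that $H^{1}(L_{0},L_{-1})=0$ plays (that hypothesis is a cohomological splitting condition governing the uniqueness of the embedding and of the subalgebra $\mathscr{L}_{0}$, not a multiplicity count). Without an upper bound on $m(\mathscr{F}(\mathscr{L},\mathscr{L}_{0}))$ in terms of the multiplicity, nothing in your argument actually ``forces $\mathscr{F}'=\mathscr{F}$''.

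For part (b), your opening step --- that every $D\in\text{Der}\,\mathscr{L}$ preserves the filtration ``since this filtration is intrinsically determined by $\mathscr{L}_{0}$'' --- begs the question: it presupposes $D(\mathscr{L}_{0})\subseteq\mathscr{L}_{0}$, which is precisely the kind of invariance statement that must be proved (compare Section 4 of the paper, where invariance of $\mathscr{L}_{0}$ is a theorem requiring the codimension estimates of Lemma 4.3, and is moreover established only for automorphisms of the specific Hamiltonian algebras, not for arbitrary derivations of an arbitrary transitive $L$). Likewise, extending $D$ to an element of $\overline{W}(\mathscr{F})$ normalizing $\tau(\mathscr{L})$ is not a ``routine normalizer computation'': it uses the universal property of the truncated coinduced module $O(\mathscr{F})$ from \cite{Blattner}, \cite{K}, which your sketch invokes by name but never applies. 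In short, your plan points at the right references but does not reconstruct the proof; given that the paper itself offers only the citation to \cite{K}, \cite{K1} here, the honest version of your argument is that citation.
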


Let $L$ be a non-alternating Hamiltonian Lie algebra, $P^{(1)} (\mathscr{F}, \omega_{0}) \subseteq L \subseteq \widetilde{P}(\mathscr{F}, \omega_{0})$,  $\omega_{0}$ be a non-degenerate non-alternating form with constant coefficients.

\begin{proposition}\label{filt 2}
{\it Let $\overline{L}_{0}$ be the $p$ - closure of $L_{0}$ in $W_{0}\cong gl(L_{-1})$, $V= W{\big /}W_{(0)}\cong L_{-1}$, $\overline{\omega}_{0}$ non-alternating symmetric bilinear form at $E=V^{\ast}$, dual $\omega_{0}$, $E^{0}$ subspace of isotropic vectors of $E$. The following statements are true.
	\begin{itemize}
		\item[$i.~$] $\overline{L}_{0}= \overline{L}_{0}^{(1)}\oplus T$, ~where $T$ -- torus, ~$T_{0} \subseteq T \subseteq T_{1}$, ~$T_{0}\cong \langle x_{i}^{(2)}+ x_{j}^{(2)}, ~i, j=1, \ldots, n \rangle$, ~$T_{1}\cong~\langle x_{i}^{(2)}, ~i=1, \ldots, n \rangle \subset \widetilde{P}_{0}$. Here $\{ x_{i} \}$ is the orthonormal basis of $E$ with respect to $\overline{\omega}_{0}$.
		\item[$ii.$] If $E_{1}\not\subset E^{0}$, then $T= T_{1}$,
		\item[$iii.$] If $n> 2$ or $n= 2$ and $T= T_{1}$, then $L_{-1}$ is an absolutely irreducible $\overline{L}_{0}$-module,
		\item[$iv.$] If $n> 4$ or $n= 2,3,4$ and $T= T_{1}$,
		\begin{itemize}
			\item[$(a)$] $H^{1} (\overline{L}_{0}, L_{-1})= 0$,
			\item [$(b)$] $(S^{3}(V)^{\ast})^{T}=0$.
			\item [$(c)$] $\text{mtp} (L_{-1}, X(L_ {(1)})\leqslant m(\mathscr{F} (L, L_ {(0)}))-n$.		\end{itemize}
	\end{itemize}}
\end{proposition}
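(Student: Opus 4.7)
The plan is to choose a basis of $E$ in canonical form for $\omega_{0}$ (Theorem~\ref{lem5.2}), reducing everything to explicit matrix calculations in the characteristic-$2$ orthogonal algebra $\mathfrak{o}(V,\omega_{0})\subset\mathfrak{gl}(V)$. Under the Poisson-bracket identification $f\mapsto D_{f}$, the degree-$0$ piece of $P$ is spanned by the $D_{x_{i}x_{j}}$ ($i\ne j$) together with those $D_{x_{i}^{(2)}}$ for which $m_{i}\ge 2$; in the orthonormal case these are $E_{ij}+E_{ji}$ and $E_{ii}$ respectively. The extra degree-$0$ generators $D_{x_{i}^{(2^{m_{i}})}}$ of $\widetilde{P}_{0}$ contribute $E_{ii}$ exactly when $m_{i}=1$. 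Thus $L_{0}$ is a restricted subalgebra of the symmetric-matrix algebra that always contains every off-diagonal $E_{ij}+E_{ji}$ and possibly a subspace of the diagonal.

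For (i), the key restricted-algebra identity $(E_{ij}+E_{ji})^{[2]}=E_{ii}+E_{jj}$ yields $T_{0}\subseteq\overline{L}_{0}^{(1)}$, while the commutators $[E_{ij}+E_{ji},E_{jk}+E_{kj}]=E_{ik}+E_{ki}$ for distinct $i,j,k$ show that all off-diagonals lie in $\overline{L}_{0}^{(1)}$; hence any toral complement $T$ sits inside the full diagonal $T_{1}$, which is maximal toral in $W_{0}$. For (ii), the hypothesis $E_{1}\not\subset E^{0}$ lets one modify the canonical basis so that $x_{1}\in E_{1}$ with $\omega_{0}(x_{1},x_{1})=1$; since $x_{1}\in E_{1}$ forces $m_{1}\ge 2$ we have $x_{1}^{(3)}\in O(\mathscr{F})$, and the Poisson bracket $\{x_{1},x_{1}^{(3)}\}=x_{1}^{(2)}$ puts $E_{11}\in P_{0}^{(1)}\subseteq L_{0}$; combined with $T_{0}\subseteq\overline{L}_{0}^{(1)}$ this yields $T=T_{1}$.

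For (iii), decompose $V$ under $T$ over an algebraic closure of $K$: when $T=T_{1}$ the weights on the $\partial_{i}$ are all distinct, so any $\overline{L}_{0}$-submodule is a sum of weight lines, and the off-diagonals $E_{ij}+E_{ji}$ interchange them, forcing absolute irreducibility. When $T=T_{0}$ the weights differ only in a common shift, so one needs $n\ge 3$ for $T_{0}$ to separate the basis; the excluded case $n=2$, $T=T_{0}$ is precisely where $T_{0}$ acts as a scalar on $V$ and $\partial_{1}+\partial_{2}$ is fixed by the sole off-diagonal generator. Part (iv)(a) follows by a standard $T$-equivariance argument: any $\varphi\in Z^{1}(\overline{L}_{0},V)$ can be adjusted by a coboundary to vanish on $T$, after which the cocycle identity on the off-diagonal generators and the explicit commutator relations force $\varphi=0$, provided the rank is high enough to avoid low-dimensional accidents. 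Part (iv)(b) reduces to a weight count: the weights of $T$ on $S^{3}(V)^{\ast}$ are sums $\epsilon_{i}+\epsilon_{j}+\epsilon_{k}$ of three basic characters, and under the stated hypotheses on $n$ and $T$ none of these vanishes.

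The main obstacle is (iv)(c), the multiplicity bound $\mathrm{mtp}(L_{-1},X(L_{(1)}))\le m(\mathscr{F}(L,L_{(0)}))-n$. Here one must identify $X(L_{(1)})=L_{(1)}/[L_{(1)},L_{(1)}]$ and control the multiplicity of $L_{-1}$ in it. The approach is to show that $[L_{i},L_{j}]=L_{i+j}$ for most positive $i,j$, so that $X(L_{(1)})$ is governed only by the ``top monomials'' $x_{i}^{(2^{m_{i}})}$ attached to the variables whose height is exhausted in the minimal flag $\mathscr{F}(L,L_{(0)})$; the $L_{0}$-module multiplicity of $L_{-1}$ in this quotient then comes out to be at most $m(\mathscr{F})-n$. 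The delicate part is pinning down exactly which brackets fail to be surjective in the excluded low-dimensional cases ($n=2,3,4$ and the $m_{i}=1$ exceptions), which requires Skryabin's coinduced-module analysis from~\cite{S}.
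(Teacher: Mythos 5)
Your overall strategy coincides with the paper's: reduce to the canonical (orthonormal) form of $\omega_{0}$, use the restricted-power identity $(\mathrm{ad}(x_{i}x_{j}))^{2}=\mathrm{ad}(x_{i}^{(2)}+x_{j}^{(2)})$ for $i.$, a non-isotropic vector of $E_{1}$ for $ii.$, weight arguments with respect to $T$ for $iv.(a),(b)$, and a deferral to the known computation of $X(L_{(1)})$ for $(c)$. However, your claim that $T_{0}\subseteq\overline{L}_{0}^{(1)}$ is false and in fact contradicts the very decomposition you are proving: in $\overline{L}_{0}=\overline{L}_{0}^{(1)}\oplus T$ one has $T_{0}\subseteq T$, so $T_{0}\cap\overline{L}_{0}^{(1)}=0$. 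Concretely, in characteristic $2$ brackets among the symmetric matrices $E_{ij}+E_{ji}$ and the diagonal matrices only ever produce off-diagonal symmetric matrices, so the derived subalgebra contains no nonzero diagonal element; the identity $(E_{ij}+E_{ji})^{[2]}=E_{ii}+E_{jj}$ places $T_{0}$ in the $p$-closure $\overline{L}_{0}$, not in its derived subalgebra. Your conclusion in $ii.$ survives because what actually matters is that $D_{y_{1}^{(2)}}$ (for $y_{1}\in E_{1}\setminus E^{0}$ in a basis coordinated with the flag, where it need not literally equal $E_{11}$) has nonzero trace on $L_{-1}$, while $\overline{L}_{0}^{(1)}\oplus T_{0}$ is traceless --- this is exactly the paper's argument --- but the step as you wrote it is wrong.

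The second weak point is $iv.(a)$ in the case $T=T_{0}$, which is the only case where something remains to be proved for $n>4$: ``provided the rank is high enough to avoid low-dimensional accidents'' is not an argument, since identifying exactly which $n$ are exceptional is the entire content of the hypothesis. The paper's proof is concrete: with respect to $T_{0}=\langle h_{1},\dots,h_{n-1}\rangle$ the weights of $L_{-1}$ are all nonzero with one-dimensional weight spaces, the roots of $\overline{L}_{0}$ are sums of distinct weights and differ from the weights when $n>4$, so the zero-weight subcomplex $C^{\ast}_{0}(\overline{L}_{0},L_{-1})$ vanishes and $H^{1}=0$ (for even $n$, in particular $n=4$, one instead uses that $z=x_{1}^{(2)}+\dots+x_{n}^{(2)}$ lies in $T_{0}$ and acts as the identity on $L_{-1}$). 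You should supply this computation rather than gesture at it. For $(b)$ you should also record, in the $T=T_{0}$ case, that the only linear relation among the weights is $\alpha_{1}+\dots+\alpha_{n}=0$, which is precisely why $n>3$ is needed there. Your treatment of $(c)$ is at the same level of detail as the paper's, which simply refers to the analogous computation of $X(L_{(1)})$ in Kostrikin--Shafarevich, so no complaint on that point.
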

\begin{proof} 
	$i.$ While studying the pair $(L_{-1}, L_{0})$ we can assume that the form $\omega_{0}$ has the form $\omega_{0}= (dx_{1})^{(2)}+ \ldots + (dx_{n})^{(2)}$. Then $\overline{\omega}_{0} (x_{i}, x_{j})= \delta_{ij}$ and $\{ f, g \}= \sum \partial_{i}f\partial_{i}g$. It is easy to check that $(\text{ad}(x_{i}x_{j}))^{2}= \text{ad}(x_{i}^{(2)}+ x_{j}^{(2)})$, which implies $i$.
	
	$ii.$ Let $y\in E_{1}$, $y\notin E^{0}$. Choose the basis $\{ y_{i} \}$ in $E$, coordinated with the flag $\mathscr{F}$, $y= y_{1}$. Let $m_{1}$ be the height of $y_{1}$. We have $m_{1}>1$. Therefore, $y_{1}^{(2)}\in L_{0}$. In the basis $\{ y_{i} \} $ $ \overline{\omega}_{0}$ looks like $\overline{\omega}_{0}(u, v)= \sum \limits_{i=1}^{n} \overline{\alpha}_{ii}\partial_{i}u\partial_{i}v + \sum\limits_{i, j} \overline{\alpha}_{ij}\partial_{i}u\partial_{j}v$, $u, v\in E$, $\overline{\alpha}_{11}= \overline{\omega}_{0}(y_{1}, y_{1})\neq 0$. Here $(\overline{\alpha}_{ij})= (\omega_{ij})^{-1}$.
	$$\{ f, g \}=\sum_{i=1}^{n} \overline {\alpha}_{ii}\partial_{i}f\partial_{i}g + \sum_{i, j}\overline{\alpha}_{ij} \partial_{i}f\partial_{j}g,$$
	$\overline{\alpha}_{ij}= \overline{\alpha}_{ji}$. Therefore, $\text{ad}\, y_{1}^{(2)}= \overline{\alpha}_{11}y_{1}\partial_{1} + \sum\limits_{j>1} \overline {\alpha}_{1j} y_{1}\partial_{j}$, $\text{tr}\left({\text{ad}\, y_{1}^{(2)}}\vert_{L_{-1}}\right)= \overline{\alpha}_{11}\neq 0$. Obviously, ${\text{tr}} \vert_{\overline{L}_{0}^{(1)}\oplus T_{0}} = 0$. Therefore $y_{1}^{(2)}= a+ t$, $a\in \overline{L}_{0}^{(1)}$, $t\in T$, $t\notin T_{0}$. Therefore, $T= T_{1}$. \\ [-0.2 cm]
	
	$iii.$ The statement is verified directly.
	
	$iv.$ If $T= T_{1}$, then $z= x_{1}^{(2)}+ \ldots + x_{n}^{(2)}\in Z(\overline{L}_{0})$ and $\text{ad} z|_{L_{-1}}= \text{id}$. So $H^{1} (\overline{L}_{0}, L_{-1})= 0$. Note that if $n$ is even, $z\in T_{0}\subset \overline{L}_{0}$.	
	Let $n> 3$ and $T= T_{0}= \langle h_{1}, \ldots, h_{n-1} \rangle$, $h_{i}= x_{i}^{(2)}+ x_{i+1}^{(2)}$, $\{ \varepsilon_{i} \}$ dual basis of $T_{0}^{\ast}$. Then the weights of $L_{-1}$ are $\alpha_{1}=\varepsilon_{1}$, $\alpha_{2}=\varepsilon_{1}+ \varepsilon_{2}$, $\ldots$ , $\alpha_{n-1}=\varepsilon_{n-2}+ \varepsilon_{n-1}$, $\alpha_{n}=\varepsilon_{n-1}$. All weights are nonzero and weight spaces are one-dimensional. The roots of $\overline{L}_{0}$ are the sums of different weights of $L_{-1}$. If $n> 4$, all root spaces are one-dimensional, the roots are different from the weights of $L_{-1}$. If $n= 4$, then the roots are different from the weights $L_{-1}$, the root spaces are two-dimensional. From the decomposition of the complex $C^{\ast}(\overline{L}_{0}, L_{-1}) $ into weight spaces with respect to $T$ we obtain
	$$H^{1} (\overline{L}_{0}, L_{-1})= H^{1}_{0}(\overline{L}_{0}, L_{-1})= H^{1}(C^{\ast}_{0}(\overline{L}_{0}, L_{-1})).$$
	Since all weights of $L_{-1}$ are nonzero and distinct from roots of $\overline{L}_{0}$, \\ $C^{\ast}_{0} (\overline{L}_{0}, L_{-1})=0$ and $H^{1} (\overline{L}_{0}, L_{-1})=0$. Note that for $n= 4$ $z\in T_{0}\subset \overline{L}_{0}$ and thus $H^{1}(\overline{L}_{0}, L_{-1})=0$.

Similarly, for ~$\varphi\in (S^{3}(V)^{\ast})^{T}$ ~we have ~$ \ varphi(v_{\mu_{1}}, v_ {\mu_{2}}, v_ {\mu_{3}})= 0$, ~if ~$\mu_{1}+\mu_{2}+ \mu_{3}= 0$, ~$\mu_{i}\in  \{ \alpha_{1}, \ldots, \alpha_{n} \}$.  The weights of module $V= L_{-1}$ satisfy the only relation $\alpha_{1}+ \ldots + \alpha_{n}= 0$. Thus, for $n> 3$ $\varphi= 0$. Let $T= T_{1}$. Then $T= \langle h_{1}, \ldots, h_{n} \rangle$. Weights of $L_{-1}$ are $\varepsilon_{1}, \ldots, \varepsilon_{n}$, where $\{ \varepsilon_{i} \}$ is the dual basis of $T^{\ast}$. Therefore, $\varphi= 0$. In particular, it is true for $n= 2$ and $3$. 
	The statement $(c)$ is proved by direct computations analogous to the computation of $X(L_ {(1)})$ for the classical Hamiltonian Lie algebra in ~\cite{KSh} (see ~\cite{KSh}, Chapter 3, Proposition 1). 
\end{proof}

\begin{theorem}\label{filt 3}
{\it Let $L= L_{-1}+ L_{0}+ \ldots $ be a graded non-alternating Hamiltonian Lie algebra, $P^{(1)}(\mathscr{F}, \omega_{0}) \subseteq L \subseteq \widetilde{P}(\mathscr{F}, \omega_{0})$, $\mathscr{L}$  filtered deformation of $L$. For $n= 2,3$, assume that $E_{1}\not\subset E^{0}$, where $E^{0}$ is the subspace of isotropic vectors with respect to the form $\overline{\omega}_{0}$ on the $E$ dual to the form $\omega_{0}$. For $n= 4$, assume that $\mathscr{F}$ is a nontrivial flag. Then
	\begin{itemize}
		\item[$i.$] $\mathscr{F} (\mathscr{L}, \mathscr{L}_{0})= \mathscr{F} (L, L_ {(0)})=\mathscr{F}$,
		\item[$ii.$] $\text{Der}\,\mathscr{L}\cong N_{\overline{W} (\mathscr{F})} (\tau (\mathscr{L}))$,
	\end{itemize}
	where $\tau\colon \mathscr{L}\rightarrow W (\mathscr{F})$ is the minimal embedding.}
\end{theorem}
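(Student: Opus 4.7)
The strategy is to verify the three hypotheses of Theorem \ref{filt 1} for the pair $(L_{-1}, L_0)$ and then invoke that theorem directly; its conclusions $(a)$ and $(b)$ are precisely the two assertions to be proved. The identification $\mathscr{F}(L, L_{(0)}) = \mathscr{F}$ is built into the graded structure of $L$ together with the minimal embedding construction, so the substance is entirely in checking hypotheses $(i)$, $(ii)$, $(iii)$ of Theorem \ref{filt 1}.

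The conditions on $n$ and the flag $\mathscr{F}$ are tailored exactly so that Proposition \ref{filt 2} applies. For $n = 2, 3$ the hypothesis $E_1 \not\subset E^0$ combined with Proposition \ref{filt 2}$(ii)$ forces $T = T_1$; for $n = 4$ nontriviality of $\mathscr{F}$ either gives the same conclusion or places the central element $z = x_1^{(2)} + \cdots + x_n^{(2)}$ in $T_0 \subseteq \overline{L}_0$, as noted in the final paragraph of the proof of Proposition \ref{filt 2}$(iv)$. With these in force, Proposition \ref{filt 2}$(iii)$ yields absolute irreducibility of $L_{-1}$ over $\overline{L}_0$. Since the derived subalgebra $\overline{L}_0^{(1)}$ lies in $L_0$ (brackets of $[2]$-powers of $L_0$-elements reduce, via $\mathrm{ad}\, x^{[2]} = (\mathrm{ad}\, x)^2$ and Jacobi, to iterated brackets in $L_0$), every $L_0$-submodule of $L_{-1}$ is automatically $\overline{L}_0^{(1)}$-invariant and hence $\overline{L}_0$-invariant; thus irreducibility descends to $L_0$, giving $(i)$. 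Hypothesis $(iii)$ is delivered verbatim by Proposition \ref{filt 2}$(iv)(c)$.

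The crux is hypothesis $(ii)$, the vanishing $H^1(L_0, L_{-1}) = 0$. Proposition \ref{filt 2}$(iv)(a)$ supplies $H^1(\overline{L}_0, L_{-1}) = 0$ via the central element $z$ acting as the identity on $L_{-1}$ (or, in the weight-space regime of that proof, because no weight of $L_{-1}$ equals zero or coincides with a root of $\overline{L}_0$). To descend to $L_0$ one writes $\overline{L}_0 = L_0 + T''$ with $T'' \subseteq T$ a complementary subspace of the torus and invokes the Hochschild-Serre-type decomposition for the abelian extension $L_0 \hookrightarrow \overline{L}_0$, or equivalently decomposes $C^\ast(L_0, L_{-1})$ into weight spaces under $L_0 \cap T$ and argues, as in the proof of Proposition \ref{filt 2}$(iv)$, that the only potentially nonvanishing weight components are killed by the contracting action of an element that, under the present hypotheses on $n$ and $\mathscr{F}$, already lies in $L_0$. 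I expect this descent from the $p$-closure to $L_0$ to be the main technical obstacle, as one must verify that every step of the weight-space analysis of Proposition \ref{filt 2}$(iv)$ survives the passage and that the relevant contracting element is realised inside $L_0$ itself.

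Once hypotheses $(i)$, $(ii)$, $(iii)$ have been established, Theorem \ref{filt 1} yields conclusions $(a)$ and $(b)$, which are precisely the asserted $(i)$ and $(ii)$ of the present theorem.
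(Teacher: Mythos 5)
Your plan is to verify the three hypotheses of Theorem~\ref{filt 1} for the pair $(L_{-1},L_{0})$ itself and then apply that theorem to $L$ directly. The paper never does this, and for good reason: the step you yourself flag as ``the main technical obstacle'' --- descending $H^{1}(\overline{L}_{0},L_{-1})=0$ to $H^{1}(L_{0},L_{-1})=0$ --- does not go through. Since $L_{0}$ is an ideal of $\overline{L}_{0}$ with abelian quotient spanned by the classes of the $[2]$-powers, inflation--restriction together with $L_{-1}^{L_{0}}=0$ gives only $H^{1}(\overline{L}_{0},L_{-1})\cong H^{1}(L_{0},L_{-1})^{\overline{L}_{0}/L_{0}}$, i.e.\ the vanishing of the \emph{invariant} part of $H^{1}(L_{0},L_{-1})$ under the outer action of the torus quotient; nonzero-weight components of $H^{1}(L_{0},L_{-1})$ are untouched. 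The weight-space argument of Proposition~\ref{filt 2}$(iv)$ depends on having the torus $T$ (or the central element $z$) available \emph{inside} the acting algebra, and precisely when the heights $m_{i}$ equal $1$ the elements $x_{i}^{(2)}+x_{j}^{(2)}$ act on $L_{-1}$ only as $(\mathrm{ad}(x_{i}x_{j}))^{2}$ and do not lie in $L_{0}$, so $L_{0}\cap T$ can be trivial and the contracting element you need is simply not realised in $L_{0}$. (A similar issue affects hypothesis $(iii)$: the multiplicity $\mathrm{mtp}(L_{-1},X(L_{(1)}))$ computed over $L_{0}$ can exceed the one computed over $\overline{L}_{0}$.) A small additional slip: an $L_{0}$-submodule of $L_{-1}$ is $\overline{L}_{0}$-invariant not because $\overline{L}_{0}^{(1)}\subseteq L_{0}$, but because the adjoined elements act as squares of operators already acting.

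The paper's actual route avoids the descent entirely. It applies Theorem~\ref{filt 1} to the enlarged graded algebra $\overline{L}=L_{-1}+\overline{L}_{0}+L_{1}+\cdots$, for which Proposition~\ref{filt 2} supplies all three hypotheses, and then transfers the conclusions back to $\mathscr{L}$: writing $\overline{L}_{0}=L_{0}+\langle a_{i}^{2}\rangle$ and choosing lifts $l_{i}\in\mathscr{L}_{0}$ of the $a_{i}$, the fact that $W(\mathscr{F}')_{(0)}$ is a $p$-subalgebra gives $l_{i}^{2}\equiv a_{i}^{2}\ (\mathrm{mod}\ W_{(1)})$, so $\overline{\mathscr{L}}=\mathscr{L}+\langle l_{i}^{2}\rangle$ is a filtered deformation of $\overline{L}$; the sandwich $\mathscr{F}(\overline{\mathscr{L}},\overline{\mathscr{L}}_{0})\leqslant\mathscr{F}'\leqslant\mathscr{F}$ forces $\mathscr{F}'=\mathscr{F}$, and each $D\in\mathrm{Der}\,\mathscr{L}$ extends to $\overline{\mathscr{L}}$ by $D(l_{i}^{2})=[D(l_{i}),l_{i}]$, giving the normalizer description. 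Unless you can independently prove $H^{1}(L_{0},L_{-1})=0$ --- which the paper does not claim and which is doubtful when the relevant heights are $1$ --- your argument has a hole exactly where you predicted one.
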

\begin{proof}
	According to~\ref{filt 2}, the conditions of the theorem~\ref{filt 1} are satisfied for the Lie algebra $\overline{L}=L_{-1}+ \overline{L}_{0} +L_{1}+ \ldots $. Hence, for any filtered deformation $\overline {\mathscr{L}}$ of the Lie algebra $\overline{L}$
	\begin{equation}\label{eqeqeqeq}
	\mathscr{F} (\overline{\mathscr{L}}, \overline{\mathscr{L}}_{0})= \mathscr{F} (\overline{L}, \overline{L}_{(0)})=\mathscr{F}, ~\text{Der}\,\overline{\mathscr{L}}\cong N_{\overline{W} (\mathscr{F})}(\overline{\tau} (\overline{\mathscr{L}})),
	\end{equation}
	where $\overline{\tau}$ is the minimal embedding of $\overline{\mathscr{L}}$ in $W(\mathscr{F})$.
	
	Let now $\mathscr{L}$ be a filtered deformation of the Lie algebra $L$, $\tau\colon \mathscr{L}\rightarrow W (\mathscr{F}')$ minimal embedding, $\mathscr{F}'= \mathscr{F} (\mathscr{L}, \mathscr{L}_{0})$. Identify $\mathscr{L}$ and $\tau(\mathscr{L})$. Let $\overline{L}_{0}= L_{0}+ \langle a_{i}^{2}, ~i=1, \ldots, s \rangle$, ~$a_{i}\in L_{0}$, ~$l_{i}\in \mathscr{L}_{0}$, ~$l_{i} \equiv a_{i} (\text{mod}\, W_ {(1)})$. ~As to ~$W(\mathscr{F}')_{(0)}$ is a $p$-subalgebra of $\text{Der}\,O(\mathscr{F}')$, ~$l_{i}^{2}\equiv  a_{i}^{2} (\text{mod}\,W_{(1)})$. Hence, $\overline{\mathscr{L}}= \mathscr{L}+ \langle l_{i}^{2}, ~i=1, \ldots, s \rangle$ is a filtered deformation of the Lie algebra $\overline{L}$. Thus, $\mathscr{F}'\geqslant \mathscr{F} (\overline{\mathscr{L}}, \overline{\mathscr{L}_{0}})$ (see ~\cite{K}). On the other hand, $\overline{\tau}_{\mathscr{L}}\colon \mathscr{L}\rightarrow W(\mathscr{F})$ is embedding, where $\overline{\tau}\colon \overline{\mathscr{L}}\rightarrow W(\mathscr{F})$ is the minimal embedding of $\mathscr{L}$, and $\mathscr{F}'\leqslant \mathscr{F}$. Thus $\mathscr{F}'= \mathscr{F}$ and we can assume that $\tau= \overline{\tau}$. Identifying $\mathscr{L}$ and $\text{ad}\,\mathscr{L}\subset \text{Der}\,\mathscr{L}$, we get $[D, \text{ad}(l^{2})]= \text{ad}[D(l), l]$. Hence, $D\in \text{Der}\,\mathscr{L}$ may be extended up to derivation of the Lie algebra $\overline{\mathscr{L}}$  in such a way that $D(l_{i}^{2})= [D(l_{i}), l_{i}]$. From here we obtain, using \eqref{eqeqeqeq}, that $\text{Der}\,\mathscr{L}\cong N_{\overline{W}(\mathscr{F})}(\tau(\mathscr{L})$.
\end{proof}

The proof of the following theorem is similar to the proof of Theorem~8.1~\cite{Sk1} (or Theorem~7.1~\cite{Sk2}) for the classical Hamiltonian Lie algebras based on the theory of truncated coinduced modules ~\cite{Blattner}, ~\cite{K}, ~\cite{St} and the theory of modular pairs of Lie-Cartan~ (see \cite{Sk1} and ~\cite{Sk2} for detailed presentation).

\begin{theorem}\label{filt 4}
{\it Under the conditions of theorem~\ref{filt 3}, there exists a unique up to an admissible automorphism of the algebra $W(\mathscr{F})$ a non-alternating Hamiltonian form $\omega$ with coefficients from $O(\mathscr{F})$, such that $\omega(0) =\omega_{0}$ and $P^{(1)}(\mathscr{F}, \omega)\subseteq \mathscr{L} \subseteq \widetilde{P} (\mathscr{F}, \omega)$.}
\end{theorem}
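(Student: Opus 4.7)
The plan is to adapt Skryabin's coinduced-module approach (\cite{Sk1}, \cite{Sk2}) to the non-alternating Hamiltonian setting in characteristic $2$. By Theorem~\ref{filt 3} we identify $\mathscr{L}$ with its image under the minimal embedding $\tau\colon\mathscr{L}\to W(\mathscr{F})$, so that $\mathscr{F}(\mathscr{L},\mathscr{L}_0)=\mathscr{F}$ and the standard coordinates $x_1,\dots,x_n$ are available. The theorem then reduces to producing a form $\omega\in S\Omega^2(\mathscr{F})$ satisfying $\omega(0)=\omega_0$, $d\omega=0$, $\det M(\omega)$ invertible in $O(\mathscr{F})$, and $D\omega=0$ for every $D\in\mathscr{L}$; given such an $\omega$, the correspondence $f\mapsto D_f$ from Section~0 makes $\mathscr{L}\subseteq\widetilde{P}(\mathscr{F},\omega)$ automatic, while the reverse inclusion $P^{(1)}(\mathscr{F},\omega)\subseteq\mathscr{L}$ follows from a graded comparison using $\operatorname{gr}P^{(1)}(\mathscr{F},\omega)=P^{(1)}(\mathscr{F},\omega_0)\subseteq L=\operatorname{gr}\mathscr{L}$.

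To construct $\omega$, view the space of closed $2$-forms $Z^2(\mathscr{F})\subseteq S\Omega^2(\mathscr{F})$ as an $\mathscr{L}$-module via the natural action. The evaluation-at-zero map $Z^2(\mathscr{F})\to S^2(V^{\ast})$ is $\mathscr{L}_0$-equivariant, and, following the Lie--Cartan pair formalism of \cite{Blattner}, \cite{K}, \cite{Sk1}, the module $Z^2(\mathscr{F})$ is a truncated coinduced module for the pair $(\mathscr{L},\mathscr{L}_0)$ with fiber $S^2(V^{\ast})$. By Frobenius-type reciprocity for such modules, lifting the $L_0$-invariant class $\omega_0$ to an $\mathscr{L}$-invariant element of $Z^2(\mathscr{F})$ is controlled filtration-step by step by cohomology groups of type $H^1(\overline{L}_0,L_{-1})$ (and higher weight variants) together with the multiplicity $\operatorname{mtp}(L_{-1},X(L_{(1)}))$. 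Under the hypotheses of Theorem~\ref{filt 3} these are precisely the vanishings supplied by Proposition~\ref{filt 2}(iv); combining with the description of $H^2(S\Omega(\mathscr{F}))$ from Theorem~\ref{ter 0.1} kills all obstructions and yields $\omega$, which is automatically non-degenerate since $\det M(\omega)\equiv\det M(\omega_0)\not\equiv 0\pmod{\mathfrak{m}}$.

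For uniqueness, if $\omega'$ is a second form with the same properties, then $\omega'-\omega\in\mathfrak{m}\cdot Z^2(\mathscr{F})$ is $\mathscr{L}$-invariant. The Cartan homotopy formula \eqref{eqeq 0.6} together with the same coinduced-module analysis lets one write $\omega'-\omega=u\cdot\omega$ for some $u\in\mathfrak{m}\cdot W(\mathscr{F})$, and the admissible automorphism $\sigma=\exp(u)$ of $O(\mathscr{F})$ then satisfies $\sigma\omega=\omega'$.

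The main obstacle will be the second paragraph: identifying $Z^2(\mathscr{F})$ as a truncated coinduced $\mathscr{L}$-module in the non-alternating setting, where by Theorem~\ref{ter 0.1} the second cohomology $H^2(S\Omega(\mathscr{F}))$ carries the extra symmetric-specific classes $[(dx_i)^{(2)}]$ absent in the classical alternating case, and verifying that these new classes do not obstruct the lifting under the hypotheses of Theorem~\ref{filt 3}.
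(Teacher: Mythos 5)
Your overall strategy---run Skryabin's truncated coinduced-module machinery with $\Lambda^2V,\Lambda^3V$ replaced by their symmetric analogues and feed in the vanishings of Proposition~\ref{filt 2}---is indeed the paper's route (the paper passes first to the enlarged algebra $\overline{\mathscr{L}}$ so that the hypotheses of Proposition~\ref{filt 2} apply, and replaces $\overline{P}=\Lambda^{2}V/\mathscr{L}_{0}\cdot\Lambda^{2}V$ by $S^{2}V/\overline{\mathscr{L}}_{0}\cdot S^{2}V$ and $\varphi\colon\Lambda^{3}V\to\overline{P}$ by $\varphi\colon S^{3}V\to\overline{P}$). But there is a genuine gap at the point where you set up ``$Z^2(\mathscr{F})\subseteq S\Omega^2(\mathscr{F})$ as a truncated coinduced module.'' The coinduced-module argument does \emph{not} directly produce a closed $2$-form with coefficients in $O(\mathscr{F})$: what it produces is a non-alternating Hamiltonian form with values in some \emph{invertible module of de Rham coefficients} $P$ of the pair $(O(\mathscr{F}),W(\mathscr{F}))$, i.e.\ a priori a twisted module $O(\mathscr{F})u\subset\widehat{O}(E)$ with $u=\exp f$, $f\in\langle x_{i}^{(2^{m_{i}})}\rangle$. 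By starting from $S\Omega^2(\mathscr{F})$ you are assuming the conclusion that the coefficients are polynomial, and this cannot be assumed: in the classical alternating Hamiltonian case the analogous statement is \emph{false} (Kac's examples of filtered deformations corresponding to forms with non-polynomial coefficients), so some input specific to the non-alternating situation is indispensable. The paper supplies it via Lemma~\ref{lemma 0.2}: for a closed symmetric $2$-form the coefficients of the $(dx_i)^{(2)}$-components are constants, which forces $f=0$ and hence $P=O(\mathscr{F})$. You correctly flag the classes $[(dx_i)^{(2)}]$ as the new symmetric-specific feature, but you treat them as a potential obstruction to lifting, whereas their actual role is the opposite: they are precisely what rigidifies the coefficient module and makes the theorem true in the stated form.

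Two smaller points. First, your uniqueness argument via $\sigma=\exp(u)$ for $u\in\mathfrak{m}\cdot W(\mathscr{F})$ is problematic in characteristic $2$, where exponentials of derivations are not available in general; uniqueness up to an admissible automorphism has to come out of the same coinduced-module analysis (as in Skryabin's Theorem~8.1), not from exponentiating a vector field. Second, the passage to $\overline{\mathscr{L}}=\mathscr{L}+\langle l_i^2\rangle$ is not optional bookkeeping: Proposition~\ref{filt 2} establishes irreducibility, $H^1(\overline{L}_0,L_{-1})=0$ and $(S^3(V)^{\ast})^{T}=0$ only for the $p$-closure $\overline{L}_0$ (via the torus $T$), so the obstruction computation must be carried out for $\overline{\mathscr{L}}$ and the conclusion then restricted to $\mathscr{L}$.
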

\begin{proof}
 We note only the changes in the proof of Theorem~8.1~\cite{Sk1} that need to be done to extend Theorem 8.1~\cite{Sk1} to the case of filtered deformations of graded non-alternating Hamiltonian Lie algebras.  First, we will consider the Lie algebra $\overline{\mathscr{L}}$ as in the proof of the theorem~\ref{filt 3}, which is a filtered deformation of the graded Lie algebra $\overline{L}$ (see~\ref{filt 2}), $\mathscr{L} \ subset \overline {\mathscr{L}}$. Second, in the proof of theorem~8.1~\cite{Sk1} we need to replace $\overline{P}=\Lambda^{2}V{\big /}\mathscr{L}_{0} \cdot \Lambda^{2}V$ with $\overline{P}=S^{2}V {\big /} \overline{\mathscr{L}} _ {0}\cdot S^{2}V$. Third, instead of $\varphi\colon \Lambda^{3}V\rightarrow \overline{P}$, consider $\varphi\colon S^{3}V\rightarrow \overline{P}$. All the information necessary to apply the proof of Theorem ~8.1~\cite{Sk1} in the case of filtered deformation of a non-alternating Hamiltonian Lie algebra is given in~\ref{filt 2}.
	
     As a result, we obtain that
	$$P^{(1)}(\mathscr{F}, \omega)\subseteq \overline{\mathscr{L}} \subseteq \widetilde{P}(\mathscr{F}, \omega),$$
	$\omega(0) =\omega_{0}$, $\omega$ is a non-alternating Hamiltonian form on $W(\mathscr{F})$ with values in some invertible module of the de Rham coefficients of the pair $(O(\mathscr{F}), W(\mathscr{F})$. It is easy to check that any invertible module of the coefficients of the de Rham $P$ of the pair $(O(\mathscr{F}), W(\mathscr{F}))$ is isomorphic to the submodule of $O(\mathscr{F})u\subset \widehat{O}(E)$, $u=\exp f$, $f\in \langle x_{i}^{(p^{m_{i}})}, ~i=1, \ldots, n \rangle$, where $\{ x_{i} \} $ is a basis of $E$, coordinated with the flag $\mathscr{F}$, $m_{i}$ is the height of $x_{i}$. However, Lemma~\ref{lemma 0.2} implies that $f= 0$. Hence, $P= O (\mathscr{F})$.
\end{proof}

\section{Non-alternating Hamiltonian forms with \\ non-constant coefficients}

Let $\mathfrak{m} (\mathscr{F})^{(j)}$ be the standard filtration of $O (\mathscr{F})$. Thus, $\mathfrak{m}(\mathscr{F})^{(1)}=\mathfrak{m}(\mathscr{F})$.  Denote by $G (\mathscr{F})$ the group of admissible automorphisms of the algebra $O (\mathscr{F})$. Let $G'(\mathscr{F})$ be a subgroup of automorphisms $\sigma\in G (\mathscr{F})$ such that $\sigma f - f \in \mathfrak{m} (\mathscr{F})^{2}$ for all $f \in O (\mathscr{F})$. The group $G (\mathscr{F})$ preserves ideals $(\mathscr{F})^{(k)}$. Define the filtration of corresponding groups by normal subgroups,  $j\geqslant 0$
$$  G(\mathscr{F})_{j}= \{ \sigma\in G(\mathscr{F}) ~ | ~ \sigma f-f \in \mathfrak{m} (\mathscr{F})^{(j+l)} ~~ \forall f \in \mathfrak{m} (\mathscr{F})^{(l)}, ~l\geqslant 0 \}, ~j\geqslant 0$$
$$G'(\mathscr{F})_{j}=G (\mathscr{F})_{j}\cap G'(\mathscr{F}),$$
and the following Lie subalgebras of $W (\mathscr{F})$
$$\mathfrak{g}'(\mathscr{F})=\mathfrak{m}(\mathscr{F})^{2}W(\mathscr{F}), \quad \mathfrak{g}'(\mathscr{F})_{j}=(\mathfrak{m}(\mathscr{F})^{2} \cap \mathfrak{m}(\mathscr{F})^{(j+1)}) W (\mathscr{F}).$$

Put 
$$z_{i}=x_{i}^{(2^{m_{i}})}, \quad \langle x_{1}, \ldots, x_{n} \rangle=E, \quad m_{i}=\min\{ j ~|~ x_{i}\notin E_{j} \}. $$

Note that the group $G'(\mathscr{F})$ acts trivially in $H^{\ast} (\Omega (\mathscr{F})$ (this follows from the triviality of the action on $H^1 (\Omega (\mathscr{F}))\equiv tilde{O} (\mathscr{F})/O (\mathscr{F})$)).

\begin{proposition}[~\cite{S}]\label{prop 1.24}
{\it For $j\geqslant 1$ for a given $\sigma \in G'(\mathscr{F})_{j}$ there exists the unique  $D \in \mathfrak{g}'(\mathscr{F})_{j}$  such that 
	\begin{gather}\label{for 1.36}
	\begin{array}{c}
	\sigma x= x + Dx \qquad \forall x\in E, \\
	(\sigma - id-D)(\mathfrak{m}(\mathscr{F})^{(l)})\subseteq \mathfrak{m}(\mathscr{F})^{(j+l+1)} \qquad l\geqslant 0.
	\end{array}	
	\end{gather} \\ [-1.3 cm]}
	\begin{flushright}$\square$\end{flushright} 	
\end{proposition}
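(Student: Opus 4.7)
The plan is to construct $D$ directly as the unique special derivation of $O(\mathscr{F})$ whose values on a basis of $E$ are prescribed by $\sigma$, and then verify the two required conditions by a routine divided-power computation.

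First I fix a basis $\{x_1,\ldots,x_n\}$ of $E$ coordinated with $\mathscr{F}$ and define $Dx_i:=\sigma x_i-x_i$. Since $\sigma\in G'(\mathscr{F})_j$ and $x_i\in\mathfrak{m}(\mathscr{F})$, we have $Dx_i\in\mathfrak{m}(\mathscr{F})^{(j+1)}$; since $\sigma\in G'(\mathscr{F})$, we also have $Dx_i\in\mathfrak{m}(\mathscr{F})^2$. I then extend by the formula $D=\sum_i (Dx_i)\partial_i$, which is visibly a special derivation of $O(\mathscr{F})$ lying in $\mathfrak{g}'(\mathscr{F})_j$. The first line of \eqref{for 1.36} then holds on all of $E$ by $K$-linearity.

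The core step is to verify the second line of \eqref{for 1.36} on the monomial generators $x^{(\alpha)}$ of $\mathfrak{m}(\mathscr{F})^{(l)}$, that is, for $|\alpha|\ge l$. Using admissibility of $\sigma$ and the divided-power addition formula \eqref{eqeq 0.1}, I expand
\[
\sigma x^{(\alpha)} \;=\; \prod_{i=1}^n (x_i+Dx_i)^{(\alpha_i)} \;=\; \sum_{0\le s_i\le \alpha_i}\;\prod_{i=1}^n x_i^{(\alpha_i-s_i)}(Dx_i)^{(s_i)}.
\]
The term with all $s_i=0$ is $x^{(\alpha)}$, and the terms with exactly one $s_i=1$ assemble into $\sum_i (Dx_i)\,\partial_i x^{(\alpha)}=D(x^{(\alpha)})$. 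For each remaining term one has $\sum_i s_i\ge 2$, and using $(Dx_i)^{(s)}\in\mathfrak{m}(\mathscr{F})^{(s(j+1))}$ together with $\mathfrak{m}(\mathscr{F})^{(a)}\cdot\mathfrak{m}(\mathscr{F})^{(b)}\subseteq\mathfrak{m}(\mathscr{F})^{(a+b)}$, such a term lies in $\mathfrak{m}(\mathscr{F})^{(|\alpha|+j\sum_i s_i)}\subseteq\mathfrak{m}(\mathscr{F})^{(l+2j)}\subseteq\mathfrak{m}(\mathscr{F})^{(l+j+1)}$, the last inclusion being precisely where the hypothesis $j\ge 1$ is used.

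Uniqueness is immediate: any $D'\in\mathfrak{g}'(\mathscr{F})_j$ satisfying the first line of \eqref{for 1.36} must agree with my construction on $E$, and a special derivation of $O(\mathscr{F})$ is determined by its values on $E$. I expect the filtration bookkeeping in the previous paragraph to be the only real obstacle: one must justify the inclusions $(Dx_i)^{(s)}\in\mathfrak{m}(\mathscr{F})^{(s(j+1))}$ and the product inclusion just cited, both of which follow from \eqref{eqeq 0.1}--\eqref{eqeq 0.2} and the description of $\mathfrak{m}(\mathscr{F})^{(k)}$ as the span of monomials of degree $\ge k$.
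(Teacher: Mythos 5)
Your argument is correct, and it is essentially complete. Note that the paper itself gives no proof of this proposition: it is quoted from Skryabin \cite{S} and closed with a $\square$, so there is nothing to compare against except the standard argument, which is exactly what you reconstruct (define $D=\sum_i(\sigma x_i-x_i)\partial_i$, expand $\sigma x^{(\alpha)}=\prod_i(x_i+Dx_i)^{(\alpha_i)}$ by the divided-power addition rule, identify the $\sum s_i=0$ and $\sum s_i=1$ terms with $x^{(\alpha)}$ and $Dx^{(\alpha)}$, and estimate the rest). Two small points deserve a sentence each in a polished write-up. First, the intermediate factors $(Dx_i)^{(s_i)}$ for $s_i\geqslant 2$ need not individually lie in $O(\mathscr{F})$, since $O(\mathscr{F})$ is not closed under arbitrary divided powers; the expansion and the degree estimate should be read in the ambient algebra $O(E)$ (where divided powers of all elements of $\mathfrak{m}$ are defined and satisfy $u^{(s)}\in\mathfrak{m}^{(sk)}$ for $u\in\mathfrak{m}^{(k)}$), after which the membership $(\sigma-\mathrm{id}-D)(x^{(\alpha)})\in\mathfrak{m}(\mathscr{F})^{(j+l+1)}$ follows because the total expression does lie in $O(\mathscr{F})$. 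Second, for $l=0$ one should dispose of the constant monomial separately, where $(\sigma-\mathrm{id}-D)(1)=0$; for $|\alpha|\geqslant 1$ your bound $|\alpha|+2j\geqslant l+j+1$ already covers it. Your uniqueness argument is fine: an element of the free $O(\mathscr{F})$-module $W(\mathscr{F})=\bigoplus O(\mathscr{F})\partial_i$ is determined by its values on the $x_i$.
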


\begin{collor}[see ~\cite{S}]\label{lem 1.25}
{\it Let $\psi \in \mathfrak{m} (\mathscr{F})^{(r)} S\Omega^{k}(\mathscr{F})$, ~$\sigma \in G'(\mathscr{F})_{j}$, where $j\geqslant 1$. ~Then ~$\sigma\psi - \psi \in \mathfrak{m}(\mathscr{F})^{(j+r)}S\Omega^{k}(\mathscr{F})$. Moreover, if $D \in \mathfrak{g}'(\mathscr{F})_{j}$ is related to $\sigma$ by \eqref{for 1.36}, $\sigma\psi - \psi - D\psi \in \mathfrak{m} (\mathscr{F})^{(j+r+1)}S\Omega^{k} (\mathscr{F})$.}
\end{collor}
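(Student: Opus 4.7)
The plan is to reduce Corollary \ref{lem 1.25} to Proposition \ref{prop 1.24} by propagating the function-level estimates through the algebra $S\Omega(\mathscr{F})$, using multiplicativity of $\sigma$ (which preserves divided powers), the derivation property of $D$, and the commutation relations $\sigma d = d\sigma$ and $[D,d]=0$. The case $k=0$ is exactly Proposition \ref{prop 1.24}. For $k\geq 1$, I write $\psi \in \mathfrak{m}(\mathscr{F})^{(r)} S\Omega^k(\mathscr{F})$ in the standard monomial basis as $\psi = \sum_\alpha f_\alpha (dx)^{(\alpha)}$ with $f_\alpha \in \mathfrak{m}(\mathscr{F})^{(r)}$ and $|\alpha|=k$, and reduce to estimating $(\sigma-\mathrm{id})$ and $(\sigma-\mathrm{id}-D)$ separately on each coefficient and on each form generator $(dx)^{(\alpha)}$.

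The key computation is on the form generators. Since $\sigma$ commutes with $d$, $\sigma(dx_i) = dx_i + y_i$ where $y_i := d(Dx_i) \in \mathfrak{m}(\mathscr{F})^{(j)} S\Omega^1(\mathscr{F})$ because $Dx_i \in \mathfrak{m}(\mathscr{F})^{(j+1)}$. The scaling property of divided powers gives $y_i^{(l)} \in \mathfrak{m}(\mathscr{F})^{(lj)} S\Omega^l(\mathscr{F})$, and the sum formula for divided powers yields
\begin{equation*}
\sigma\bigl((dx_i)^{(s)}\bigr) - (dx_i)^{(s)} = \sum_{l=1}^{s} (dx_i)^{(s-l)} y_i^{(l)}.
\end{equation*}
The $l=1$ summand equals $(dx_i)^{(s-1)} d(Dx_i) = D\bigl((dx_i)^{(s)}\bigr)$ (using the divided-power derivation rule $D((dx_i)^{(s)}) = (dx_i)^{(s-1)} D(dx_i)$ together with $D(dx_i)=d(Dx_i)$), while the $l\geq 2$ tail lies in $\mathfrak{m}(\mathscr{F})^{(2j)} S\Omega^s(\mathscr{F})$. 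Multiplicativity of $\sigma$ and Leibniz for $D$ then extend these estimates to arbitrary monomials:
\begin{equation*}
(\sigma-\mathrm{id})\bigl((dx)^{(\alpha)}\bigr) \in \mathfrak{m}(\mathscr{F})^{(j)} S\Omega^k(\mathscr{F}), \qquad (\sigma-\mathrm{id}-D)\bigl((dx)^{(\alpha)}\bigr) \in \mathfrak{m}(\mathscr{F})^{(2j)} S\Omega^k(\mathscr{F}).
\end{equation*}

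To assemble the two claims, I decompose
\begin{align*}
\sigma\psi - \psi - D\psi = {}& \sum_\alpha (\sigma f_\alpha - f_\alpha - Df_\alpha)(dx)^{(\alpha)} \\
& {}+ \sum_\alpha f_\alpha\bigl((\sigma-\mathrm{id}-D)((dx)^{(\alpha)})\bigr) \\
& {}+ \sum_\alpha (\sigma f_\alpha - f_\alpha)\bigl((\sigma-\mathrm{id})((dx)^{(\alpha)})\bigr).
\end{align*}
Proposition \ref{prop 1.24} gives $\sigma f_\alpha - f_\alpha - Df_\alpha \in \mathfrak{m}(\mathscr{F})^{(j+r+1)}$ and $\sigma f_\alpha - f_\alpha \in \mathfrak{m}(\mathscr{F})^{(j+r)}$, while writing $D=\sum g_i\partial_i$ with $g_i\in\mathfrak{m}(\mathscr{F})^{(2)}\cap\mathfrak{m}(\mathscr{F})^{(j+1)}$ yields $Df_\alpha\in\mathfrak{m}(\mathscr{F})^{(j+r)}$. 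Combining with the monomial estimates from the previous paragraph and using $j\geq 1$ (so $2j\geq j+1$ and $2j+r\geq j+r+1$), each of the three summands lies in $\mathfrak{m}(\mathscr{F})^{(j+r+1)} S\Omega^k(\mathscr{F})$. The weaker statement $\sigma\psi - \psi \in \mathfrak{m}(\mathscr{F})^{(j+r)} S\Omega^k(\mathscr{F})$ follows from the same decomposition, dropping the $D$-terms and using only the $(\sigma-\mathrm{id})$ bounds.

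The main technical obstacle is the characteristic-2 divided-power bookkeeping in expanding $\sigma((dx_i)^{(s)})$: I must invoke the fact that $S\Omega(\mathscr{F})$ is itself a divided-power algebra on the ideal of positive-degree forms so that the sum formula $(dx_i + y_i)^{(s)} = \sum_l (dx_i)^{(s-l)} y_i^{(l)}$ is legitimate, and then track filtration shifts carefully through this expansion to confirm that the $l=1$ terms collect precisely into $D\psi$ while every remaining correction picks up the extra factor of $j\geq 1$ required to land in $\mathfrak{m}(\mathscr{F})^{(j+r+1)} S\Omega^k(\mathscr{F})$.
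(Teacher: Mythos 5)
Your proof is correct. The paper itself offers no argument for this corollary --- it is imported from Skryabin's treatment of the classical alternating case with the reference to ~\cite{S} --- so your write-up supplies a proof the paper omits, and it follows the route one would expect: expand $\psi$ over the free $O(\mathscr{F})$-basis $\{(dx)^{(\alpha)}\}$ of $S\Omega^{k}(\mathscr{F})$, feed the coefficients into Proposition~\ref{prop 1.24} and the definition of $G(\mathscr{F})_{j}$, and control $(\sigma-\mathrm{id})$ and $(\sigma-\mathrm{id}-D)$ on the basis forms. The one ingredient genuinely new relative to the alternating setting of ~\cite{S} is the presence of the divided powers $(dx_{i})^{(s)}$, and you handle it correctly: $\sigma dx_{i}-dx_{i}=d(Dx_{i})$ lies in the degree-one component $S\Omega^{1}(\mathscr{F})$, so the addition formula for divided powers is applicable there, the $l=1$ term of the expansion is exactly $D\bigl((dx_{i})^{(s)}\bigr)=(dx_{i})^{(s-1)}d(Dx_{i})$, and every higher term carries a coefficient in $\mathfrak{m}(\mathscr{F})^{(2j)}$, which together with $2j\geqslant j+1$ yields both stated bounds. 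Two small points are worth making explicit. First, the scaling $(g\,dx_{s})^{(l)}=g^{l}(dx_{s})^{(l)}$ for $g\in O(\mathscr{F})$ (not merely $g\in K$), which you need to conclude $y_{i}^{(l)}\in\mathfrak{m}(\mathscr{F})^{(lj)}S\Omega^{l}(\mathscr{F})$, holds because $S\Omega(\mathscr{F})$ is realized functionally as $S(W(\mathscr{F}))^{\ast}$. Second, the identities $D\bigl(\omega^{(s)}\bigr)=\omega^{(s-1)}D\omega$ for the Lie-derivative action and $D(dx_{i})=d(Dx_{i})$, which guarantee that the $l=1$ terms really do assemble into $D\psi$, follow from the Cartan formula \eqref{eqeq 0.6} and the multilinear-function model of the product; neither is stated in the paper for the symmetric complex, so they deserve a line of verification.
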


For the non-alternating Hamiltonian form $\omega \in S\Omega^{2}(\mathscr{F})$, we define the isomorphism of $O(\mathscr{F})$-modules $i_{\omega}\colon W(\mathscr{F}) \rightarrow S\Omega^{1}(\mathscr{F}),$ $i_{\omega}(D) = D\lrcorner\omega$.

In the future, we use abbreviations ~$G'_{j}= G'(\mathscr{F})_{j}$, ~$S\Omega= S\Omega (\mathscr{F})$, ~$\mathfrak{m}= \ = \ mathfrak{m} (\mathscr{F})$, etc. 

Write non-alternating Hamiltonian form $\omega$ in the form $\omega= \sum a_{ii}dx_{i}^{(2)} + \sum\limits_{i<j} a_{ij}dx_{i}dx_{j} + \\ + d\varphi + \sum\limits_{i<j} b_{ij}dz_{i}dz_{j}$, where $a_{ij},b_{ij}\in K$, $\varphi \in \mathfrak{m}^{(2)}S\Omega^{1}$.
We assume that $\omega (0)$ is given to \textit{canonical form}.

We identify $H^{2}(\Omega)$ with the subspace $\langle dz_{i}dz_{j} \rangle \subset H^{2}(S\Omega)$ and define the semi-linear map $\lambda\colon H^{2}(\Omega)\rightarrow O(\mathscr{F})$ assuming $\lambda(b^{2}dz_{i}dz_{j})= \\ = bx_{i}^{(2^{m_{i}-1})}x_{j} {(2^{m_{j}-1})}$.

\begin{lemma}\label{lem dx2}
{\it (1) An automorphism $\sigma \in G'$ acts identically on the cohomological class of the form $\omega$ if and only if for all $i$, such that $\omega(0)$ contains $dx_{i}^{(2)}$, runs in $\sigma x_{i} = x_{i} + f$ where $f \in \mathfrak{m}^{2}$ and $f$ does not contain monomials of the form $\lambda(dz_{s}dz_{j})$. 
	
	(2) Let $\{ dx_{i}^{(2)}, dz_{i}dz_{j}\} $ be the basis of $H^{2}(S\Omega)$. The automorphism $\sigma \in G'$ can act not identically only on the elements $dx_{i}^{(2)}$. \\ If $\sigma x_{i} = x_{i} + bx_{s}^{(2^{m_{s}-1})}x_{r}^{(2^{m_{r}-1})}$, then $\sigma dx_{i}^{(2)} = dx_{i}^{(2)} +  d\varphi + b^{2}dz_{s}dz_{r}$, where $\varphi= bx_{s}^{(2^{m_{s}-1})} x_{r}^{(2^{m_{r}-1})} dx_{i}$.}
\end{lemma}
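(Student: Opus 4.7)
The plan is to handle part (2) first and derive (1) from its local analysis.

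For (2), my starting point is the decomposition $H^{2}(S\Omega) = B(n)_{2}\oplus H^{2}(\Omega)$ given by Theorem~\ref{ter 0.1}, where $B(n)_{2} = \langle [(dx_{i})^{(2)}]\rangle$ and $H^{2}(\Omega) = \langle[dz_{i}dz_{j}]\rangle$. Since $G'$ acts trivially on $H^{\ast}(\Omega)$ (as noted in the paper just before Proposition~\ref{prop 1.24}), $\sigma$ necessarily fixes each $[dz_{i}dz_{j}]$, and the only classes it can shift are the $[(dx_{i})^{(2)}]$ — this gives the first sentence of (2). To get the explicit formula, I set $g = b\,x_{s}^{(2^{m_{s}-1})}x_{r}^{(2^{m_{r}-1})}$ and use that admissible automorphisms commute with $d$ and with divided powers:
\[
\sigma(dx_{i})^{(2)} = (dx_{i}+dg)^{(2)} = (dx_{i})^{(2)} + dx_{i}\cdot dg + (dg)^{(2)}.
\]
The middle term equals $d(g\,dx_{i}) = d\varphi$ since $d(dx_{i}) = 0$. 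For $(dg)^{(2)}$, I split $dg = b(A+B)$ along the Leibniz rule, with $A = x_{s}^{(2^{m_{s}-1}-1)}x_{r}^{(2^{m_{r}-1})}dx_{s}$ and $B = x_{s}^{(2^{m_{s}-1})}x_{r}^{(2^{m_{r}-1}-1)}dx_{r}$, so that $(dg)^{(2)} = b^{2}(A^{(2)}+AB+B^{(2)})$. The pure squares vanish: $(f\,dx_{k})^{(2)} = f^{2}(dx_{k})^{(2)}$, and $(x^{(2^{m-1}-1)})^{2} = \binom{2^{m}-2}{2^{m-1}-1}x^{(2^{m}-2)} = 0$ in characteristic 2 by Lucas. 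For the cross term, $x_{s}^{(2^{m_{s}-1}-1)}x_{s}^{(2^{m_{s}-1})} = \binom{2^{m_{s}}-1}{2^{m_{s}-1}-1}x_{s}^{(2^{m_{s}}-1)} = x_{s}^{(2^{m_{s}}-1)}$ (again by Lucas), and analogously in $r$, giving $AB = x_{s}^{(2^{m_{s}}-1)}x_{r}^{(2^{m_{r}}-1)}dx_{s}dx_{r} = dz_{s}dz_{r}$. Assembling, $\sigma(dx_{i})^{(2)} = (dx_{i})^{(2)} + d\varphi + b^{2}dz_{s}dz_{r}$.

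For (1), Proposition~\ref{prop 1.24} lets me write $\sigma x_{i} = x_{i}+f_{i}$ with $f_{i}\in\mathfrak{m}^{2}$, and I decompose $f_{i} = \sum_{k}f_{i,k}$ into monomials. The cohomology class of $\omega$ collapses to $\sum a_{ii}[(dx_{i})^{(2)}] + \sum_{i<j}b_{ij}[dz_{i}dz_{j}]$, because $\sum_{i<j}a_{ij}dx_{i}dx_{j} = d(\sum_{i<j}a_{ij}x_{i}dx_{j})$ and $d\varphi$ are exact. Since $\sigma$ fixes the $[dz_{i}dz_{j}]$ and the scalar coefficients, $\sigma[\omega] = [\omega]$ is equivalent to $\sum a_{ii}\bigl([\sigma(dx_{i})^{(2)}] - [(dx_{i})^{(2)}]\bigr) = 0$. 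Expanding
\[
\sigma(dx_{i})^{(2)} = (dx_{i})^{(2)} + dx_{i}\cdot df_{i} + \sum_{k}(df_{i,k})^{(2)} + \sum_{k<\ell}df_{i,k}\,df_{i,\ell},
\]
I note that $dx_{i}\cdot df_{i} = d(f_{i}\,dx_{i})$ and $df_{i,k}\,df_{i,\ell} = d(f_{i,k}\,df_{i,\ell})$ are both exact, so only the $(df_{i,k})^{(2)}$ summands can contribute non-trivially to cohomology.

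The main obstacle is the monomial-by-monomial analysis of $(df_{i,k})^{(2)}$: for $f_{i,k} = bx^{(\alpha)}$, the pure-square components $\sum_{p}(x^{(\alpha-\varepsilon_{p})})^{2}(dx_{p})^{(2)}$ vanish since $\binom{2m}{m} \equiv 0 \pmod 2$ for $m > 0$; the remaining $\sum_{p<q}x^{(\alpha-\varepsilon_{p})}x^{(\alpha-\varepsilon_{q})}dx_{p}dx_{q}$ lies in $\Omega^{2}$, and a Lucas count combined with the exactness identity $d(x_{s}^{(2\alpha_{s})}x_{r}^{(2\alpha_{r}-1)}dx_{r}) = x_{s}^{(2\alpha_{s}-1)}x_{r}^{(2\alpha_{r}-1)}dx_{s}dx_{r}$, valid whenever $2\alpha_{s} < 2^{m_{s}}$, pins down the class in $H^{2}(\Omega) = \langle dz_{s}dz_{j}\rangle$: it is non-zero precisely when $\alpha = 2^{m_{s}-1}\varepsilon_{s} + 2^{m_{j}-1}\varepsilon_{j}$ for some $s<j$, i.e.\ when $f_{i,k} = \lambda(b^{2}dz_{s}dz_{j})$, and then (as part (2) already computed) the contribution is $b^{2}dz_{s}dz_{j}$. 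Summing over the monomials of $f_{i}$ and over $i$ with $a_{ii}\ne 0$ yields exactly the criterion stated in (1).
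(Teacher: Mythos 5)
Your proof is correct and follows essentially the same route as the paper: the decomposition $H^{2}(S\Omega)=\langle[(dx_{i})^{(2)}]\rangle\oplus H^{2}(\Omega)$ with trivial $G'$-action on the second summand, the expansion $\sigma(dx_{i})^{(2)}=(dx_{i})^{(2)}+dx_{i}\cdot df+(df)^{(2)}$, and the identification of the monomials $\lambda(dz_{s}dz_{r})$ as the only ones whose divided square contributes a nontrivial cohomology class. You merely make explicit, via Lucas' theorem and the exactness identities, the binomial computations that the paper's proof asserts without detail.
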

\begin{proof}
	According to theorem~\ref{ter 0.1} $H^{2} (S\Omega)= H^{2}(\Omega) \oplus \langle dx_{1}^{(2)}, \ldots, dx_{n}^{(2)} \rangle$. On $H^{2}(\Omega)$, the automorphism $\sigma \in G'$ acts trivially. 
	
	Let $\sigma \in G_{j}'$, $j\geqslant 1$, $\sigma x = x+ Dx$ for $x\in E$, $D \in \mathfrak{g}_{j}'$. Let $Dx=\sum a_{\alpha}x^{(\alpha)}$. Then $\sigma (dx)^{(2)}= (\sigma dx)^{(2)}= (d\sigma x)^{(2)}= (d(x+ Dx))^{(2)}= (dx+ dDx)^{(2)}= (dx)^{(2)}+ dx\cdot dDx+ (dDx)^{(2)}= (dx)^{(2)}+ d(D(x)dx) + (d\sum a_ {\alpha}x^{(\alpha)})^{(2)} = (DX)^{(2)}+ d(D (x)dx) + \left (\sum\limits_{\alpha,i} a_{\alpha}\partial_{i}x^{(\alpha)}dx_{i}\right) ^{(2)} = (dx)^{(2)}+ \\ + d (D(x) dx) + d\varphi' + \sum a_{sr}dz_{s}dz_{r}= (dx)^{(2)} + d\varphi + \eta$, where $a_{sr}= a_{\alpha}^{2}$ is coefficient square at $x^{(\alpha)}= x_{s}^{(2^{m_{s}-1})}x_{r}^{(2^{m_{r}-1})}$ in $Dx$. Consequently, the cohomology class of the form $(dx)^{(2)}$ changes only if $Dx$ has a summand $a_{\alpha} x_{s}^{(2^{m_{s}-1})}x_{r}^{(2^{m_{r}-1})}= \lambda (a_{sr}dz_{s}dz_{r})$. For $Dx = b\lambda (dz_{s}dz_{r})$ we get $\sigma(dx)^{(2)} = (dx)^{(2)} + d(D (x)dx) + b^{2}dz_{s}dz_{r}$.
\end{proof}

\begin{collor}\label{col dx2}
{\it For $\sigma \in G'$ such that $\sigma x_{i} = x_{i} + bx_{s}^{(2^{r})}x_{j}^{(2^{t})}$, we have $\sigma dx_{i}^{(2)} = dx_{i}^{(2)} + d\varphi + \ + b^{2}x_{s}^{(2^{r+1}-1)} x_{j}^{(2^{t+1}-1)} dx_{s}dx_{j}$, where $\varphi= bx_{s}^{(2^{r})} x_{j}^{(2^{t})} dx_{i}$. Here $0\leqslant r < m_{s}, ~0\leqslant t < m_{j}$.}
\end{collor}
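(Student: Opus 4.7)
The plan is to mimic the calculation in the proof of Lemma~\ref{lem dx2}, substituting the explicit $Dx_i = bx_s^{(2^r)} x_j^{(2^t)}$ into the general formula for the action of an admissible automorphism on a divided square. Since $\sigma$ commutes with $d$ and with divided powers on $1$-forms, $\sigma dx_i^{(2)} = (d\sigma x_i)^{(2)} = (dx_i + dDx_i)^{(2)}$. Expanding via the characteristic-$2$ identity $(u_1+u_2)^{(2)} = u_1^{(2)} + u_1 u_2 + u_2^{(2)}$ yields
\[
\sigma dx_i^{(2)} = dx_i^{(2)} + dx_i\cdot dDx_i + (dDx_i)^{(2)}.
\]

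For the middle term, $d^{2}=0$ and the Leibniz rule for $d$ give $d(Dx_i\cdot dx_i) = dDx_i\cdot dx_i$, so that $dx_i\cdot dDx_i = d\varphi$ with $\varphi = bx_s^{(2^r)} x_j^{(2^t)} dx_i$, which is the first piece of the claimed formula.

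The bulk of the work is the third term. I would write $dDx_i = \alpha + \beta$ with $\alpha = bx_s^{(2^r-1)} x_j^{(2^t)} dx_s$ and $\beta = bx_s^{(2^r)} x_j^{(2^t-1)} dx_j$, and apply $(\alpha+\beta)^{(2)} = \alpha^{(2)} + \alpha\beta + \beta^{(2)}$ together with $(f\eta)^{(2)} = f^{2}\eta^{(2)}$ for a function $f$ and a $1$-form $\eta$. The two pure-square summands vanish because the scalar factor in $\alpha^{(2)}$ is a multiple of $(x_s^{(2^r-1)})^{2} = \binom{2^{r+1}-2}{2^r-1} x_s^{(2^{r+1}-2)}$, and Lucas's theorem forces $\binom{2^{r+1}-2}{2^r-1}\equiv 0\pmod{2}$ (the bit at position $0$ of $2^{r+1}-2$ is $0$ while that of $2^r-1$ is $1$); symmetrically for $\beta^{(2)}$. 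The cross term equals $b^{2}\binom{2^{r+1}-1}{2^r-1}\binom{2^{t+1}-1}{2^t-1} x_s^{(2^{r+1}-1)} x_j^{(2^{t+1}-1)} dx_s\,dx_j$, and Lucas again gives both binomials $\equiv 1\pmod{2}$. Summing the three pieces produces the corollary.

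The only real obstacle is the pair of mod-$2$ binomial identities, but these are immediate once $2^{r}-1$, $2^{r+1}-2$, and $2^{r+1}-1$ are written in binary, so no step really resists; everything else is a direct substitution, and the degenerate case $s=j$ (where $dx_s\,dx_j = 2(dx_s)^{(2)} = 0$) is consistent with the formula.
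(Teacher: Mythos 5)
Your computation is correct and is essentially the paper's own argument: the corollary carries no separate proof there precisely because it is the single-monomial specialization of the expansion $(dx+dDx)^{(2)}=(dx)^{(2)}+dx\cdot dDx+(dDx)^{(2)}$ performed in the proof of Lemma~\ref{lem dx2}, with the cross term of $(dDx_i)^{(2)}$ evaluated explicitly via $x_s^{(2^{r}-1)}x_s^{(2^{r})}=\binom{2^{r+1}-1}{2^{r}-1}x_s^{(2^{r+1}-1)}=x_s^{(2^{r+1}-1)}$. The one small slip is that your Lucas argument for $\alpha^{(2)}=0$ fails at $r=0$ (there $\binom{2^{r+1}-2}{2^{r}-1}=\binom{0}{0}=1$), but the conclusion survives because the other factor $(x_j^{(2^{t})})^{2}=\binom{2^{t+1}}{2^{t}}x_j^{(2^{t+1})}=0$; the cleanest statement is that the square of any non-constant monomial in divided powers vanishes in characteristic $2$.
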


\begin{lemma}\label{lem 2.3}
{\it Let $\omega \in Z^{k}(S\Omega)$, $k=1,2$ and $\sigma \in G_{j}'$, $D \in \mathfrak{g}_{j}'$, where $j\geqslant 1$, are bound by the condition \eqref{for 1.36}. If $\omega \in Z^{1}(S\Omega)$, then there is $\varphi \in \mathfrak{m}$ such that $\sigma\omega - \omega = d\varphi$ and $\varphi - D\lrcorner\omega \in \mathfrak{m}^{(j+2)}$. If $\omega \in Z^{2}(S\Omega)$, then $\sigma\omega - \omega = d\varphi+ \eta$, where $\varphi \in S\Omega^{1}$, $\eta \in H^{2} (\Omega)$, $ \lambda(\eta)\in \mathfrak{m}^{(j+1)}$, and $\varphi-D\lrcorner\omega \in \mathfrak{m}^{(j+2)}s\Omega^{1}$.}
\end{lemma}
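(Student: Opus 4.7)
The plan is to combine Corollary~\ref{lem 1.25} with the Cartan homotopy formula~\eqref{eqeq 0.6}, reducing each case to a filtered Poincar\'e-type calculation on $(S\Omega, d)$. First, applying Corollary~\ref{lem 1.25} with $r=0$ gives $\sigma\omega - \omega - D\omega \in \mathfrak{m}^{(j+1)} S\Omega^{k}$; since $d\omega = 0$, formula~\eqref{eqeq 0.6} yields $D\omega = d(D \lrcorner \omega)$, so
$$
\rho := \sigma\omega - \omega - d(D \lrcorner \omega) \in \mathfrak{m}^{(j+1)} S\Omega^{k},
$$
and $\rho$ is closed because both $\sigma\omega$ and $\omega$ are. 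Moreover, $D \in \mathfrak{g}'_{j} = (\mathfrak{m}^{2} \cap \mathfrak{m}^{(j+1)}) W(\mathscr{F})$ forces $D \lrcorner \omega \in \mathfrak{m}^{(j+1)} S\Omega^{k-1}$. It remains to exhibit a primitive of $\rho$ (up to a permitted cohomological term) of filtration at least $j+2$.

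For $k = 1$, Theorem~\ref{ter 0.1} gives $H^{1}(S\Omega) = H^{1}(\Omega)$, and the remark before Proposition~\ref{prop 1.24} states that $G'$ acts trivially on $H^{\ast}(\Omega)$. Hence $[\sigma\omega - \omega] = 0$ in $H^{1}(S\Omega)$, so $\rho = dg$ for some $g \in O(\mathscr{F})$. The condition $dg \in \mathfrak{m}^{(j+1)} S\Omega^{1}$ means each $\partial_{i} g \in \mathfrak{m}^{(j+1)}$, which in the basis $\{x^{(\alpha)}\}$ forces every coefficient of $g$ of total degree in $[1, j+1]$ to vanish; after subtracting the constant, we may take $g \in \mathfrak{m}^{(j+2)}$. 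Then $\varphi := D \lrcorner \omega + g$ satisfies $\sigma\omega - \omega = d\varphi$, $\varphi - D \lrcorner \omega = g \in \mathfrak{m}^{(j+2)}$, and $\varphi \in \mathfrak{m}$.

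For $k = 2$, Theorem~\ref{ter 0.1} gives $H^{2}(S\Omega) = H^{2}(\Omega) \oplus \langle [dx_{i}^{(2)}]\rangle$. By Lemma~\ref{lem dx2}(2), $\sigma \in G'$ acts trivially on the $H^{2}(\Omega)$-summand and shifts each class $[dx_{i}^{(2)}]$ by an element $b_{i}^{2}[dz_{s_{i}} dz_{r_{i}}] \in H^{2}(\Omega)$ read off from the $\lambda$-summands $b_{i} \overline{x_{s_{i}}}\, \overline{x_{r_{i}}}$ of $Dx_{i}$. Since $Dx_{i} \in \mathfrak{m}^{(j+1)}$, each such summand lies in $\mathfrak{m}^{(j+1)}$; weighting by the constants $a_{ii}$ of $\omega(0)$ and using the semi-linearity of $\lambda$ gives $[\sigma\omega - \omega] = [\eta]$ with $\lambda(\eta) \in \mathfrak{m}^{(j+1)}$. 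Writing $\rho = \eta + d\varphi''$ for some $\varphi'' \in S\Omega^{1}$, the analogous filtered Poincar\'e argument in degree two allows $\varphi''$ to be chosen in $\mathfrak{m}^{(j+2)} S\Omega^{1}$. Setting $\varphi = D \lrcorner \omega + \varphi''$ yields the required $\sigma\omega - \omega = d\varphi + \eta$ with $\varphi - D \lrcorner \omega \in \mathfrak{m}^{(j+2)} S\Omega^{1}$.

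The principal obstacle is the degree-two filtered Poincar\'e step: producing, from a closed $\rho \in \mathfrak{m}^{(j+1)} S\Omega^{2}$ whose class lies in $\langle [dz_{s} dz_{r}]\rangle$, a primitive of $\rho - \eta$ in $\mathfrak{m}^{(j+2)} S\Omega^{1}$. The natural route is to construct an explicit filtration-respecting homotopy operator on $(S\Omega, d)$, separating the contributions of the $(dx_{i})^{(2)}$ and $dx_{i}dx_{j}$ factors as in the tensor decomposition $H^{\ast}(S\Omega) = B(n) \otimes H^{\ast}(\Omega)$ underlying Theorem~\ref{ter 0.1}, and then reducing the filtration tracking to the scalar monomial inspection already used in the $k=1$ case.
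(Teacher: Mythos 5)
Your argument is correct, but it takes a genuinely different route from the paper's. The paper proves the lemma by exhibiting an explicit spanning set of $Z^{k}(S\Omega)$ --- coboundaries, the forms $(dx_{i})^{(2)}$, and (for $k=2$) products $Z^{1}(S\Omega)\cdot Z^{1}(S\Omega)$, with closed $1$-forms reduced modulo coboundaries to $d(f^{(2)})=f\,df$ --- and verifies the filtration estimate generator by generator via explicit formulas (e.g.\ $h=(\sigma f-f)^{(2)}+f(\sigma f-f)$ for $\omega=d(f^{(2)})$). You instead reduce everything to two inputs: the action of $G'$ on $H^{\ast}(S\Omega)$ (trivial on $H^{1}$, and on $H^{2}$ given by Lemma~\ref{lem dx2}), plus a filtered Poincar\'e lemma asserting that an exact form lying in $\mathfrak{m}^{(j+1)}S\Omega^{k}$ has a primitive in $\mathfrak{m}^{(j+2)}S\Omega^{k-1}$. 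Your version is shorter and more conceptual; what it gives up is the cochain-level bookkeeping of which monomials actually occur in $\varphi$, which the paper reuses later (e.g.\ in Proposition~\ref{lem 2.5.1} and Theorem~\ref{th sigma}).

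The step you defer as the ``principal obstacle'' is in fact immediate, and no homotopy operator is needed. Write $S\Omega^{k}=\bigoplus_{s}S\Omega^{k}_{[s]}$, where $S\Omega^{k}_{[s]}$ is the span of the constant basis forms with coefficients in $O_{s}$; then $d(S\Omega^{1}_{[s]})\subseteq S\Omega^{2}_{[s-1]}$, so $B^{2}(S\Omega)$ is a graded subspace and $\mathfrak{m}^{(r)}S\Omega^{k}=\bigoplus_{s\geqslant r}S\Omega^{k}_{[s]}$. Since $\lambda(\eta)\in\mathfrak{m}^{(j+1)}$ forces $\eta\in\mathfrak{m}^{(2j)}S\Omega^{2}\subseteq\mathfrak{m}^{(j+1)}S\Omega^{2}$ for $j\geqslant 1$, the exact form $\rho-\eta$ lies in $\mathfrak{m}^{(j+1)}S\Omega^{2}$; writing $\rho-\eta=d\psi$ and discarding the components $\psi_{[s]}$ with $s\leqslant j+1$ (whose differentials must vanish) leaves a primitive in $\mathfrak{m}^{(j+2)}S\Omega^{1}$. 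This is exactly the monomial observation you already made in the $k=1$ case, one degree higher. One small imprecision: the coefficients weighting the classes $[\eta_{i}]$ should be the constants $\omega_{ii}$ attached to $(dx_{i})^{(2)}$ in the arbitrary closed form $\omega$ (constants by Lemma~\ref{lemma 0.2}), not the canonical-form entries $a_{ii}$ of $\omega(0)$; since $K$ is perfect and $\lambda$ is semilinear, this does not affect the conclusion $\lambda(\eta)\in\mathfrak{m}^{(j+1)}$.
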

\begin{proof}
	Consider first the action of $\sigma$ on $\omega = d\psi$, where $\psi \in S\Omega^{k-1}$. Correcting $\psi$ on coboundary, we can assume that $\psi \in \mathfrak{m}S\Omega^{k-1}$. Assuming $\varphi= \sigma\psi - \psi - d(D\lrcorner\psi)$, we get $\sigma d\psi - d\psi = d(\sigma\psi - \psi) = d\varphi$. Since $D\lrcorner d\psi = D\psi -d(D\lrcorner\psi)$, by corollary~\ref{lem 1.25}, $\varphi - D\lrcorner d\psi= \sigma\psi - \psi - D\psi \in \mathfrak{m}^{(j+2)}S\Omega^{k-1}$.
	
	Now let $\omega=(dx)^{(2)}$, $x\in E$. We use notations from the proof of the Lemma~\ref{lem dx2}. For $Dx=\sum a_{\alpha}x^{(\alpha)} \in \mathfrak{m}^{2} \cap \mathfrak{m}^{(j+1)}$ we have 	
	$\sigma(dx)^{(2)}= (dx)^{(2)} + d\varphi + \eta$, where $\lambda (\eta)= \lambda (\sum a_{is}dz_{i}dz_{s})\in \mathfrak{m}^{(j+1)},$ and $\varphi - D\lrcorner (dx)^{(2)}= \varphi-D(x)dx= \varphi'\in \mathfrak{m}^{(2j+1)}S\Omega^{1} \subseteq \mathfrak{m}^{(j+2)}S\Omega^{1}$.
	
	Show that for $\omega \in Z^{1}(S\Omega)$ the Lemma is true. For $f \in \mathfrak{m}$ 
	$$\sigma d(f^{(2)})- d(f^{(2)})= d ((\sigma f)^{(2)} - f^{(2)})= dh,$$
	where $h= (\sigma f-f)^{(2)} + f (\sigma f-f) \in O (\mathscr{F})$, because $\sigma f-f\in \mathfrak{m}^{2}$. Herewith
	\begin{equation}\label{for 2.3}
	h - D \lrcorner d(f^{(2)}) = h - Df^{(2)} = h-fDf = f(\sigma f - f-Df) + (\sigma f-f)^{(2)}. 
	\end{equation} 
	In view of \eqref{for 1.36} $f (\sigma f-f - Df)\in \mathfrak{m}^{(j+3)}$ and $(\sigma f-f)^{(2)}\in \mathfrak{m}^{(2j+2)}$, so that the right side of \eqref{for 2.3} is known to be $\mathfrak{m}^{(j+2)}$, that is, for $d(f^{(2)})$ the Lemma is true. But any 1-cocycle comparable  modulo coboundaries with the appropriate cocycle $fdf$. So the Lemma is true for all $\omega \in Z^{1}(S\Omega)$.
	
	It remains to prove that the Lemma is true for $\omega_{1}\omega_{2} \in Z^{1}(S\Omega)\cdot Z^{1}(S\Omega)$. Let $\sigma\omega_{i} - \omega_{i} = d\varphi_{i}$. Then $\sigma (\omega_{1}\omega_{2}) - \omega_{1}\omega_{2} = (\sigma \omega_{1} - \omega_{1})(\sigma \omega_{2} - \omega_{2}) + \omega_{1}(\sigma \omega_{2} - \omega_{2}) + (\sigma \omega_{1} - \omega_{1})\omega_{2} = d\varphi_{1}d\varphi_{2} + \omega_{1}d\varphi_{2} +  d\varphi_{1}\omega_{2} = d(\varphi_{1}d\varphi_{2}) + d(\omega_{1}\varphi_{2} + \varphi_{1}\omega_{2})= d\varphi$ and 
	$\varphi_{1}d\varphi_{2} + \omega_{1}\varphi_{2} + \varphi_{1}\omega_{2} - D\lrcorner(\omega_{1}\omega_{2}) = \varphi_{1}d\varphi_{2} + \omega_{1}\varphi_{2} + \\ + \varphi_{1}\omega_{2} (D\lrcorner\omega_{1})\omega_{2} - \omega_{1}(D\lrcorner\omega_{2}) = \varphi_{1}d\varphi_{2} + \omega_{1}(\varphi_{2} - D\lrcorner\omega_{2}) + (\varphi_{1} - D\lrcorner\omega_{1})\omega_{2}$. As in corollary~\ref{lem 1.25} ~$d\varphi_{i}\in \mathfrak{m}^{(j)}S\Omega^{1}$, then $\varphi - D\lrcorner(\omega_{1}\omega_{2}) \in \mathfrak{m}^{(2j+1)}S\Omega^{1} + \mathfrak{m}^{(j+2)}S\Omega^{1} + \mathfrak{m}^{(j+2)}S\Omega^{1} \subseteq \mathfrak{m}^{(j+2)}S\Omega^{1}$.
	
	It follows from the Theorem~\ref{ter 0.1} that $Z^{2} (S\Omega)$ is spanned by $(dx_{i})^{(2)}$ ($i=1,\ldots, n$), $Z^{1}(S\Omega)\cdot Z^{1}(S\Omega)$, and $B^{2}(S\Omega)$. So, the Lemma is proven.
\end{proof}

\begin{collor}\label{col 2.4}
{\it If $\omega \in S\Omega^{2}$ is a closed form, $\sigma \in G_{j}'$, $j\geqslant 1$, then $\sigma\omega - \omega = d\varphi + \eta$ for a suitable form $\varphi \in \mathfrak{m}^{(j+1)} S\Omega^{1}$ and $\eta \in H^{2}(\Omega)$, $\lambda(\eta)\in \mathfrak{m}^{(j+1)}$.}
\end{collor}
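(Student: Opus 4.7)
The plan is to invoke Lemma~\ref{lem 2.3} directly, and then upgrade its conclusion by using the filtration degree of the derivation associated with $\sigma$. Since $\omega\in Z^{2}(S\Omega)$ and $\sigma\in G'_{j}$, Proposition~\ref{prop 1.24} supplies a unique $D\in\mathfrak{g}'_{j}$ linked to $\sigma$ by \eqref{for 1.36}, and the second clause of Lemma~\ref{lem 2.3} produces $\varphi\in S\Omega^{1}$ and $\eta\in H^{2}(\Omega)$ with $\lambda(\eta)\in\mathfrak{m}^{(j+1)}$ such that
\[
\sigma\omega-\omega=d\varphi+\eta,\qquad \varphi-D\lrcorner\omega\in\mathfrak{m}^{(j+2)}S\Omega^{1}.
\]
So the decomposition and the required bound on $\eta$ come for free; the only remaining task is to sharpen the membership of $\varphi$ from $S\Omega^{1}$ to $\mathfrak{m}^{(j+1)}S\Omega^{1}$.

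The key observation I would make is that $D\lrcorner\omega\in\mathfrak{m}^{(j+1)}S\Omega^{1}$. This is immediate from the definition $\mathfrak{g}'_{j}=(\mathfrak{m}^{2}\cap\mathfrak{m}^{(j+1)})W(\mathscr{F})$: writing $D=\sum f_{i}\partial_{i}$ with $f_{i}\in\mathfrak{m}^{(j+1)}$, each coefficient of $D\lrcorner\omega$ is an $O(\mathscr{F})$-linear combination of the $f_{i}$, and $\mathfrak{m}^{(j+1)}$ is an ideal. Combining with the lemma,
\[
\varphi=(\varphi-D\lrcorner\omega)+D\lrcorner\omega\in\mathfrak{m}^{(j+2)}S\Omega^{1}+\mathfrak{m}^{(j+1)}S\Omega^{1}\subseteq\mathfrak{m}^{(j+1)}S\Omega^{1},
\]
which is exactly the claim.

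I do not expect a substantive obstacle: all the delicate work, namely the splitting of $Z^{2}(S\Omega)$ afforded by Theorem~\ref{ter 0.1} into the exact part, the products $Z^{1}\cdot Z^{1}$, and the $(dx_{i})^{(2)}$ summands, together with the tracking of the cohomological correction $\eta$ arising from the $(dx_{i})^{(2)}$ components via Lemma~\ref{lem dx2}, has already been carried out in Lemma~\ref{lem 2.3}. The corollary is simply the convenient repackaging of that result in the form needed for the admissibility-of-automorphism arguments later in the section, with the $D\lrcorner\omega$ term absorbed into the filtration bound on $\varphi$.
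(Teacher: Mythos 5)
Your argument is correct and is precisely the derivation the paper intends: the corollary is stated without proof as an immediate consequence of Lemma~\ref{lem 2.3} (case $k=2$) together with Proposition~\ref{prop 1.24}, and the only content to supply is the observation that $D\lrcorner\omega\in\mathfrak{m}^{(j+1)}S\Omega^{1}$ because the coefficients of $D\in\mathfrak{g}'_{j}$ lie in $\mathfrak{m}^{(j+1)}$. Your filtration bookkeeping $\varphi=(\varphi-D\lrcorner\omega)+D\lrcorner\omega\in\mathfrak{m}^{(j+2)}S\Omega^{1}+\mathfrak{m}^{(j+1)}S\Omega^{1}$ closes the gap exactly as required.
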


For the non-alternating Hamiltonian form $\omega = \omega(0) + d\varphi + \eta$, where $\varphi \in \mathfrak{m}^{(2)} S\Omega^{1}$ and $\eta \in \ H^{2} (\Omega)$ we introduce the set of indices $I= 
\{ i ~ | ~ \bar{a}_{ii} \neq 0\}$. We also need the set $$\widetilde{\mathfrak{m}}^{(j)} S\Omega^{1}= \langle T \rangle, \text{ where} $$
$$T = \{ x^{(\alpha)} dx_{k} ~ | ~  |\alpha|\geqslant j, ~k=1,\ldots, n \} ~\diagdown~ \{ x_{r}^{(2^{l})} x_{s}^{(2^{t})} dx_{i}, ~x_{q}^{(2)} dx_{i}, ~i,q\in I \}.$$

\begin{lemma}\label{lem mm}
{\it Let $j\geqslant 1$ and $\varphi= x_{r}^{(2^{l})}x_{s}^{(2^{t})}dx_{i} \in \mathfrak{m}^{(j+1)}S\Omega^{1}$, $i\in I$. If $r,s\notin I$, or $l, t> 0$, or $r\notin I$, $t> 0$, or $s\notin I$, $l> 0$, then $\varphi = d\psi + \widetilde{\varphi}$, where 
$\widetilde{\varphi} \in \widetilde{\mathfrak{m}}^{(j+1)}S\Omega^{1}$. For $i= r$ and $m_{i}>1$ if $l> 0$ or $t> 1$, or $s\notin I$, then $x_{i}^{(2^{l})}x_{s}^{(2^{t})} dx_{i} = d\psi + 
\widetilde{\varphi}$, where $\widetilde{\varphi} \in \widetilde{\mathfrak{m}}^{(j+1)}S\Omega^{1}$.}
\end{lemma}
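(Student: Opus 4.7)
The plan is to construct an explicit primitive $\psi\in\mathfrak{m}^{(j+2)}$ so that, modulo an admissible remainder, $\varphi=d\psi$. When the three indices $i,r,s$ are distinct the natural candidate is $\psi=x_r^{(2^l)}x_s^{(2^t)}x_i$: the Leibniz rule for $d$ in characteristic $2$ together with $d(u^{(a)})=u^{(a-1)}\,du$ gives
$$d\psi=x_r^{(2^l-1)}x_s^{(2^t)}x_i\,dx_r+x_r^{(2^l)}x_s^{(2^t-1)}x_i\,dx_s+x_r^{(2^l)}x_s^{(2^t)}\,dx_i,$$
so that $\varphi=d\psi+\widetilde{\varphi}$ with $\widetilde{\varphi}$ equal to the sum of the first two summands on the right. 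For the special case $i=r$ with $m_i>1$ I would instead take $\psi=x_i^{(2^l+1)}x_s^{(2^t)}$, which lies in $O(\mathscr{F})$ by the height assumption; this choice kills the ``$dx_r$'' correction and leaves only $\widetilde{\varphi}=x_i^{(2^l+1)}x_s^{(2^t-1)}\,dx_s$.

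The second step is the bookkeeping. Under each of the listed hypotheses one checks that every monomial of $\widetilde{\varphi}$, expressed in the standard basis $\{x^{(\alpha)}\,dx_k\}$, belongs to the defining set $T$ of $\widetilde{\mathfrak{m}}^{(j+1)}S\Omega^1$, i.e.\ is not of the form $x_a^{(2^p)}x_b^{(2^q)}\,dx_c$ nor of the form $x_q^{(2)}\,dx_c$ with the ``dangerous'' indices in $I$. Under the first hypothesis ($r,s\notin I$) this is immediate, since each summand of $\widetilde{\varphi}$ has its $dx$-index in $\{r,s\}\subset I^c$. Under $l,t>0$ one uses that $2^l-1$ and $2^t-1$ are not powers of $2$ when $l,t\ge 2$, while for $l=1$ or $t=1$ the extra factor $x_i$ spoils the two-factor shape required to be excluded. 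The mixed hypotheses $(r\notin I,\,t>0)$ and $(s\notin I,\,l>0)$ combine the two arguments: one summand is disqualified by its $dx$-index, the other by the shape of its exponent vector.

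The main obstacle, and the reason for the somewhat elaborate list of hypotheses, is the case analysis when two of the indices among $\{i,r,s\}$ collide. Collisions are governed by the divided-power multiplication rule $x_u^{(a)}x_u^{(b)}=\binom{a+b}{a}x_u^{(a+b)}$, which in characteristic $2$ makes $x_u^{(2^l-1)}\cdot x_u=\binom{2^l}{1}x_u^{(2^l)}=0$ kill certain remainder terms outright; this is precisely what licenses the alternative choice $\psi=x_i^{(2^l+1)}x_s^{(2^t)}$ in the special case $i=r$, and one must check that $2^l+1$ is not a power of $2$ for $l\ge 1$, while the residual subcase $t=1,\,s\in I$ is exactly excluded by the hypothesis ``$t>1$ or $s\notin I$'' in the second part of the statement. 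I expect the bulk of the routine work to go into this finite but finicky combinatorial verification over the boundary exponents $l,t\in\{0,1\}$ and over the few index-collision patterns.
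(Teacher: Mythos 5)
Your proposal is correct and takes essentially the same route as the paper: the paper's proof produces exactly the same primitives and the same remainders, namely $\widetilde{\varphi}=x_{r}^{(2^{l}-1)}x_{s}^{(2^{t})}x_{i}\,dx_{r}+x_{r}^{(2^{l})}x_{s}^{(2^{t}-1)}x_{i}\,dx_{s}$ in the generic case and $\widetilde{\varphi}=x_{i}^{(2^{l}+1)}x_{s}^{(2^{t}-1)}\,dx_{s}$ when $i=r$, $m_{i}>1$, followed by the same case check of membership in $\widetilde{\mathfrak{m}}^{(j+1)}S\Omega^{1}$. The only cosmetic difference is that the paper first computes $d\varphi$ and recognizes it as $d$ of the remainder instead of exhibiting $\psi$ directly.
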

\begin{proof}
	If $i\neq r, s$, then $$d (x_{r}^{(2^{l})}x_{s}^{(2^{t})} dx_{i})= x_{r}^{(2^{l}-1)} x_{s}^{(2^{t})} dx_{r}dx_{i} + x_{r}^{(2^{l}}) x_{s}^{(2^{t}-1)} dx_{s}dx_{i} =$$ $$ =d(x_{r}^{(2^{l}-1)}x_{s} {(2^{t})} x_{i}dx_{r} + x_{r} {(2^{l})} x_{s}^{(2^{t}-1)} x_{i}dx_{s})$$. Therefore, $$x_{r}^{(2^{l})}x_{s}^{(2^{t})} dx_{i} = d\psi + x_{r}^{(2^{l}-1)} x_{s}^{(2^{t})}x_{i}dx_{r} + x_{r}^{(2^{l})} x_{s}^{(2^{t}-1)}x_{i}dx_{s}.$$ Obviously, $x_{r}^{(2^{l}-1)}x_{s}^{(2^{t})}x_{i}dx_{r} + x_{r}^{(2^{l})} x_{s}^{(2^{t}-1)} x_{i}dx_{s} \in \widetilde{\mathfrak{m}}^{(j+1)}S\Omega^{1}$, except for $r\in I$, $l=0$ or $s\in I$, $t= 0$. 
	
	If $m_{i}>1$, then $d(x_{i}^{(2^{l})} x_{s}^{(2^{t})} dx_{i}) = x_{i}^{(2^{l})} x_{s}^{(2^{t}-1)} dx_{s}dx_{i} = d(x_{i}^{(2^{l}+1)} x_{s}^{(2^{t}-1)}dx_{s})$. Therefore, $x_{i}^{(2^{l})}x_{s}^{(2^{t})}dx_{i} = d\psi + x_{i}^{(2^{l}+1)}x_{s}^{(2^{t}-1)}dx_{s}$. Obviously, $$x_{i}^{(2^{l}+1)} x_{s}^{(2^{t}-1)} dx_{s} \in \widetilde{\mathfrak{m}}^{(j+1)} S\Omega^{1}$$ 
for $l> 0$ or $t> 1$, or $s\notin I$.
\end{proof}

\begin{lemma}\label{lemma h}
{\it If $h\in \mathfrak{m}^{2}$, then $(dh)^{(2)}= d\varphi$, where $\varphi \in \widetilde{\mathfrak{m}}^{(2)}S\Omega^{1}$, then and only then, when $h$ does not contain monomials $\lambda(dz_{r}dz_{s})$ and monomials of the form $x_{q}x_{j}^{(2^{t})}$, where $q\in I$, $m_{q}= 1$ or $q, j\in I$, $m_{q}> 1$, $t= 0$.}
\end{lemma}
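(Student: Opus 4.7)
The plan is to decompose $h$ into monomials and analyze $(dh)^{(2)}$ term by term, combining the structure of $H^{\ast}(S\Omega(\mathscr{F}))$ from Theorem~\ref{ter 0.1} with the reduction provided by Lemma~\ref{lem mm}. Using the divided-power identities $(u+v)^{(2)} = u^{(2)} + uv + v^{(2)}$ and $(fu)^{(2)} = f^{2} u^{(2)}$, I first expand
$$(dh)^{(2)} = \sum_{i} (\partial_{i} h)^{2} (dx_{i})^{(2)} + \sum_{i<j} (\partial_{i} h)(\partial_{j} h)\, dx_{i}\, dx_{j}.$$
Since $h \in \mathfrak{m}^{2}$, every monomial of $\partial_{i} h$ has the form $c\,x^{(\beta)}$ with $|\beta|\ge 1$, and $(x^{(\beta)})^{2} = \prod_{k} \binom{2\beta_{k}}{\beta_{k}}\, x_{k}^{(2\beta_{k})} = 0$ (since $\binom{2a}{a}$ is even for every $a\ge 1$). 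Hence $(dh)^{(2)} \in \Omega^{2}(\mathscr{F})$, so the relevant cohomology to control is $H^{2}(\Omega) = \langle \bar{x}_{i} \bar{x}_{j}\, dx_{i}\, dx_{j} \rangle$.

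Writing $h=\sum_{\alpha} c_{\alpha} x^{(\alpha)}$ and using $dh_{1} \cdot dh_{2} = d(h_{1}\, dh_{2})$, I reduce to the monomial case modulo an explicit coboundary:
$$(dh)^{(2)} = \sum_{\alpha} c_{\alpha}^{2}\, (dx^{(\alpha)})^{(2)} + d\!\left(\sum_{\alpha<\beta} c_{\alpha} c_{\beta}\, x^{(\alpha)}\, dx^{(\beta)}\right).$$
A direct calculation with $x_{k}^{(a)} x_{k}^{(b)} = \binom{a+b}{a} x_{k}^{(a+b)}$, together with Kummer's theorem (so that $\binom{2a-1}{a-1}$ is odd iff $a$ is a power of $2$), shows $(dx^{(\alpha)})^{(2)} = 0$ unless $\alpha = 2^{l}\varepsilon_{i} + 2^{t}\varepsilon_{j}$ with $i\neq j$, in which case
$$(dx^{(\alpha)})^{(2)} = x_{i}^{(2^{l+1}-1)}\, x_{j}^{(2^{t+1}-1)}\, dx_{i}\, dx_{j}.$$

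Case analysis by $(l,t)$ then yields the two bad families. When $l=m_{i}-1$ and $t=m_{j}-1$, the form equals $\bar{x}_{i}\bar{x}_{j}\,dx_{i}\,dx_{j} = dz_{i}\,dz_{j}$, a nonzero class in $H^{2}(\Omega)$ by Theorem~\ref{ter 0.1}, so it admits no antiderivative in $S\Omega^{1}(\mathscr{F})$; here $x^{(\alpha)} = \lambda(dz_{i}\, dz_{j})$ is the first family. Otherwise, WLOG $l\le m_{i}-2$, and the candidates $\psi = x_{i}^{(2^{l+1})}\, x_{j}^{(2^{t+1}-1)}\, dx_{j}$ and $\psi' = x_{i}^{(2^{l+1}-1)}\, x_{j}^{(2^{t+1})}\, dx_{i}$ (when valid) both satisfy $d\psi = d\psi' = (dx^{(\alpha)})^{(2)}$; Lemma~\ref{lem mm} reduces at least one of them to $\widetilde{\mathfrak{m}}^{(2)} S\Omega^{1}$ except precisely when $x^{(\alpha)} = x_{q}\, x_{j}^{(2^{t})}$ with $q\in I,\ m_{q}=1$, or with $q,j\in I,\ m_{q}>1,\ t=0$—matching the second family. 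I also need to verify that the cross-term $d\sum_{\alpha<\beta} c_{\alpha} c_{\beta}\, x^{(\alpha)}\, dx^{(\beta)}$ reduces into $d(\widetilde{\mathfrak{m}}^{(2)} S\Omega^{1})$ whenever the contributing monomials of $h$ are good: since each $x^{(\alpha)} dx^{(\beta)}$ has total order $\ge 3$, the excluded monomials $x_{q}^{(2)} dx_{k}$ do not arise, and the only remaining obstruction is exponents $2^{l}\varepsilon_{r} + 2^{t}\varepsilon_{s}$ with $k\in I$, which Lemma~\ref{lem mm} handles under the good-monomial hypothesis.

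The hard part will be the ``only if'' direction: if $h$ contains a bad monomial, I must show intrinsically—not merely by the failure of the two natural antiderivatives—that no $\varphi \in \widetilde{\mathfrak{m}}^{(2)} S\Omega^{1}$ with $d\varphi = (dh)^{(2)}$ exists, and that distinct bad monomials yield linearly independent obstructions so that no cancellation is possible. The $H^{2}(\Omega)$ projection cleanly detects the $\lambda$-type monomials, but the second family of bad monomials lies inside $B^{2}(S\Omega)$ and requires a finer invariant: a residue functional on $S\Omega^{2}/d(\widetilde{\mathfrak{m}}^{(2)} S\Omega^{1})$ keyed to each excluded monomial type $x_{q}\,x_{j}^{(2^{t})}$, together with a verification that the resulting system of functionals separates the bad monomials. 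This independence check, and the matching verification that no coboundary $df + \sum c_{i}\bar{x}_{i}\,dx_{i}$ added to the candidate antiderivatives can place them into $\widetilde{\mathfrak{m}}^{(2)} S\Omega^{1}$ in the exceptional cases, constitutes the main technical obstacle.
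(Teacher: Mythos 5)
Your sufficiency argument follows essentially the same route as the paper: decompose $h$ into monomials, observe that $(dx^{(\alpha)})^{(2)}$ survives only for $\alpha=2^{l}\varepsilon_{i}+2^{t}\varepsilon_{j}$, push the cross-terms into $d\bigl(\widetilde{\mathfrak{m}}^{(2)}S\Omega^{1}\bigr)$, detect the $\lambda(dz_{i}dz_{j})$ family through $H^{2}(\Omega)$, and feed the remaining candidates to Lemma~\ref{lem mm}. One correction there: your claim that the cross-terms are harmless because $x^{(\alpha)}dx^{(\beta)}$ has ``total order $\geqslant 3$'' uses the wrong invariant. Degree $\geqslant 3$ does not exclude the forbidden shape $x_{r}^{(2^{l})}x_{s}^{(2^{t})}dx_{i}$ (e.g. $x_{r}^{(2)}x_{s}dx_{i}$ has degree $3$), and Lemma~\ref{lem mm} does not cover all such terms when $s\in I$, $t=0$. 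The correct reason, which is the one the paper uses, is that every monomial of $h\in\mathfrak{m}^{2}$ is a product of at least two indecomposable factors $x_{k}^{(2^{c})}$, so each coefficient $x^{(\alpha)}x^{(\beta-\varepsilon_{i})}$ appearing in $x^{(\alpha)}dx^{(\beta)}$ is a product of at least \emph{three} indecomposables, whereas the monomials excluded from $\widetilde{\mathfrak{m}}^{(2)}S\Omega^{1}$ have exactly one or two; since the number of indecomposable factors of a nonzero monomial is determined by the binary digits of its exponents, no reduction is needed at all for the cross-terms.

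The genuine gap is the ``only if'' direction, which you explicitly leave unresolved (``the main technical obstacle''). As submitted, you have proved only one implication of a stated equivalence. The paper disposes of this direction directly: if $h$ contains $\lambda(dz_{r}dz_{s})$, then $(dh)^{(2)}$ contains the nontrivial class $dz_{r}dz_{s}$ (Lemma~\ref{lem dx2}), so it is not exact at all; if $h$ contains a bad monomial $x_{q}x_{j}^{(2^{t})}$ of the second family, then by Corollary~\ref{col dx2} the term $x_{q}x_{j}^{(2^{t+1}-1)}dx_{q}dx_{j}=d\psi$ survives in $(dh)^{(2)}$, with $\psi=x_{q}x_{j}^{(2^{t+1})}dx_{q}$ or $\psi=x_{q}^{(2)}x_{j}dx_{j}$, and $\psi\notin\widetilde{\mathfrak{m}}^{(2)}S\Omega^{1}$; the point is that the antiderivative of a fixed exact $2$-form is determined up to $Z^{1}(S\Omega)=B^{1}(S\Omega)+\langle\overline{x}_{i}dx_{i}\rangle$, and this ambiguity cannot move $\psi$ into $\widetilde{\mathfrak{m}}^{(2)}S\Omega^{1}$, nor can distinct bad monomials cancel, since they contribute to distinct monomial components $dx_{q}dx_{j}$ with distinct coefficients $x_{q}x_{j}^{(2^{t+1}-1)}$. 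Your proposal instead calls for constructing a system of residue functionals on $S\Omega^{2}\big/ d\bigl(\widetilde{\mathfrak{m}}^{(2)}S\Omega^{1}\bigr)$ and proving their independence; that would work in principle but is not carried out, so the necessity half of the lemma remains unproved in your write-up.
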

\begin{proof}
	Since $h\in \mathfrak{m}^2$, $h= \sum \beta_{j}g_{j}$ is a linear combination of decomposable monomials $g_{j}$. Then 
$$(dh)^{(2)} = \left( \sum \beta_{j}dg_{j}\right)^{(2)} = \sum\limits_{j<k} \beta_{j}\beta_{k}dg_{j}dg_{k} + \sum\limits_{j} \beta_{j}^{2}(dg_{j})^{(2)} = $$
$$\sum\limits_{j<k} d(\beta_{j}\beta_{k}g_{j}dg_{k}) + \sum\limits_{j} \beta_{j}^ {2} (dg_{j})^{(2)} = d\psi + \sum \beta_{j}^{2}(dg_{j})^{(2)},$$
where $\psi \in \widetilde{\mathfrak{m}}^{(2)}S\Omega^{1}$ because the form $\xi \notin 
\widetilde{\mathfrak{m}}^{(2)} S\Omega^{1}$ must contain $x_{r}^{(2^{l})} x_{s}^{(2^{t})} dx_{i}$ which is the product of two indecomposable elements and $dx$, or $x_{q}^{(2)}dx_{i}$ which is a product of an indecomposable element and $dx$. If a monomial $g$ is $y_{1}y_{2}$ and one of the monomials $y_{1} \in \mathfrak{m}^{2}$ or $y_{2} \in \mathfrak{m}^{2}$ decompose, then $(dg)^{(2)}= y_{1}d(y_{2}) + d (y_{1}) y_{2} = d(y_{1}^{(2)})d(y_{2}^{(2)}) = 0$.
	Let $g = x_{r}^{(2^{k})}x_{s}^{(2^{t})}$, then $(dg)^{(2)} d((x_{r}^{(2^{k})})^{(2)})d((x_{s}^{(2^{t})})^{(2)})= x_{r}^{(2^{k+1}-1)}x_{s}^{(2^{t+1}-1)}dx_{r}dx_{s} = d\varphi_{1}$ if $t< m_{s}-1$, or $(dg)^{(2)}= d\varphi_{2}$ if $k< m_{r}-1$. Here $\varphi_{1} = x_{r}^{(2^{k+1}-1)} x_{s}^{(2^{t+1})} dx_{r}$, $\varphi_{2} = x_{r}^{(2^k+1)} x_{s}^{(2^{t+1}-1)} dx_{s}$. By Lemma ~\ref{lem mm} $\varphi_{1} + df_{1} \in \widetilde{\mathfrak{m}}^{(2)}S\Omega^{1}$ if $r\notin I$ or $k> 0$, or $m_{r}> 1$, $s\notin I$, or $m_{r}> 1$, $t> 0$ and $\varphi_{2} + df_{2} \in \widetilde{\mathfrak{m}}^{(2)}S\Omega^{1}$ if $s\notin I$ or $t> 0$, or $m_{s}> 1$, $r\notin I$, or $m_{s}> 1$, $k> 0$.
	Thus, if $h$ does not contain monomials of the form $x_{r}^{(2^{m_{r}-1})} x_{s}^{(2^{m_{s}-1})} = \lambda(dz_{r}dz_{s})$ and monomials of the form $x_{q}x_{j}^{(2^{t})}$, where $q\in I$, $m_{q}= 1$ or $m_{q}> 1$, $t= 0$, $q, j\in I$, then $\varphi \in \widetilde{\mathfrak{m}}^{(2)}s\Omega^{1}$. The sufficiency is proved.
	
	Now, let $\sigma \in G', ~\sigma x_{i}= x_{i} + h$, then $\sigma dx_{i}^{(2)}= dx_{i}^{(2)} + dhdx_{i} + (dh)^{(2)}$ (see proof of Lemma~\ref{lem dx2}). If $h$ contains $\lambda(dz_{r}dz_{s})$, then the Lemma~\ref{lem dx2} $(dh)^{(2)}$ contains $dz_{r}dz_{s} \neq d\varphi$. If $h$ contains $x_{q}x_{j}^{(2^{t})}$, where $q\in I$, $m_{q}= 1$ or $q, j\in I$, $m_{q}> 1$, $t= 0$, then from corollary ~\ref{col dx2} and the reasoning above, we obtain that $(dh)^{(2)}$ contains $x_{q}x_{j}^{(2^{t+1}-1)}dx_{q}dx_{j} = d\psi$, where $\psi = x_{q}x_{j}^{(2^{t+1})} dx_{q}$ or $\psi = x_{q}^{(2)} x_{j}dx_{j}$, and $\psi \notin \widetilde{\mathfrak{m}}^{(2)}S\Omega^{1}$. That is, the necessity is proved.	
\end{proof}

\begin{proposition}\label{lem 2.5.1}
{\it Let $\omega, \omega'$ be two non-alternating Hamiltonian forms, and $\omega' -  \omega = d\varphi$, where $\varphi \in \widetilde{\mathfrak{m}}^{(j+1)}S\Omega^{1}$, $j\geqslant 1$. Then, there exists $\sigma \in G_{j}'$ such that $\omega' - \sigma\omega = d\widetilde{\varphi}$ for some $\widetilde{\varphi} \in \widetilde{\mathfrak{m}}^{(j+2)}S\Omega^{1}$.}
\end{proposition}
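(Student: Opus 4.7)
The plan is to build $\sigma \in G_j'$ via Proposition~\ref{prop 1.24} from a suitable derivation $D \in \mathfrak{g}_j'$, chosen so that $D \lrcorner \omega$ matches $\varphi$ modulo the next filtration layer and so that the cohomological tail produced when $\sigma$ acts on $\omega$ is trivial. I would first use the nondegeneracy of $\omega$ to invert the $O(\mathscr{F})$-module isomorphism $i_\omega \colon W(\mathscr{F}) \to S\Omega^1$ at the level of leading coefficients: writing $\omega = \omega(0) + \omega_+$ with $\omega_+ \in \mathfrak{m} S\Omega^2$ and $\varphi = \sum_l \varphi_l\, dx_l$, one solves $D\lrcorner \omega(0) = \varphi$ literally by setting $D(x_k) = \sum_l \bar{a}_{kl}\varphi_l$, where $(\bar{a}_{kl})$ is the inverse of the constant-coefficient matrix of $\omega(0)$ (block-diagonal by Theorem~\ref{lem5.2}).

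The crucial observation is that the exclusion set defining $\widetilde{\mathfrak{m}}^{(j+1)} S\Omega^1$---the monomials $x_r^{(2^l)} x_s^{(2^t)} dx_i$ and $x_q^{(2)} dx_i$ with $i, q \in I$---is engineered so that a $\varphi$ avoiding these monomials yields (a) coefficients $D(x_k) \in \mathfrak{m}^2 \cap \mathfrak{m}^{(j+1)}$, placing $D$ in $\mathfrak{g}_j'$, and (b) no $\lambda(dz_r dz_s)$ monomial inside $D(x_i)$ for $i \in I$, which by Lemma~\ref{lem dx2} and Corollary~\ref{col dx2} are precisely the atoms producing nontrivial classes in $H^2(\Omega)$. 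Taking the corresponding $\sigma \in G_j'$ and applying Lemma~\ref{lem 2.3} gives
\[
\sigma\omega - \omega = d\varphi_\sigma + \eta, \qquad \varphi_\sigma - D\lrcorner \omega \in \mathfrak{m}^{(j+2)} S\Omega^1, \qquad \eta \in H^2(\Omega).
\]
Property (b) forces $\eta = 0$; combined with $D\lrcorner \omega = \varphi + D\lrcorner \omega_+$ and $D\lrcorner \omega_+ \in \mathfrak{m}^{(j+2)} S\Omega^1$, this gives $\omega' - \sigma\omega = d\chi$ with $\chi := \varphi - \varphi_\sigma \in \mathfrak{m}^{(j+2)} S\Omega^1$. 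Lemma~\ref{lem mm}, together with Lemma~\ref{lemma h} applied to the $(dh)^{(2)}$-type contributions coming from $\sigma(dx_i)^{(2)}$, then lets one write $\chi = d\psi + \widetilde{\varphi}$ with $\widetilde{\varphi} \in \widetilde{\mathfrak{m}}^{(j+2)} S\Omega^1$, whence $\omega' - \sigma\omega = d\widetilde{\varphi}$ as required.

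The main obstacle I foresee is the concluding reduction: proving that $\chi$ is free of atoms from the exclusion set of $\widetilde{\mathfrak{m}}^{(j+2)}$, so that Lemma~\ref{lem mm} in fact applies to it. This needs a careful case analysis tracing how $D(x_i)$ for $i \in I$ interacts with the non-constant part $\omega_+$ and with the nonlinear remainder in Proposition~\ref{prop 1.24}, verifying---possibly after a small further refinement of $D$---that no new $\lambda(dz_r dz_s)\, dx_i$ or $x_q^{(2)}\, dx_i$ monomials are generated by these interactions. The remaining steps---existence of a matching $D \in \mathfrak{g}_j'$, the vanishing of $\eta$, and the invocation of Lemma~\ref{lem 2.3}---are routine given the preparatory material of this section.
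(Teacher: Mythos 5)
Your skeleton matches the paper's: reduce to a derivation $D$ with $D\lrcorner\omega$ equal to $\varphi$, pass to $\sigma\in G_j'$ via Proposition~\ref{prop 1.24}, kill the $H^2(\Omega)$-tail with Lemma~\ref{lem dx2}, and finish with Lemma~\ref{lem 2.3}. But there are two genuine gaps. First, your claim (a) is false: the exclusion set defining $\widetilde{\mathfrak{m}}^{(j+1)}S\Omega^{1}$ only removes monomials $x_r^{(2^l)}x_s^{(2^t)}dx_i$ and $x_q^{(2)}dx_i$ with $i\in I$, so $\varphi$ may perfectly well contain terms such as $\beta x_k^{(2^t)}dx_s$ with $t\geqslant 2$, whose coefficient $x_k^{(2^t)}$ is an indecomposable divided power and hence \emph{not} in $\mathfrak{m}^2$. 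For such $\varphi$ your $D$ fails to lie in $\mathfrak{g}_j'=(\mathfrak{m}^2\cap\mathfrak{m}^{(j+1)})W$, and Proposition~\ref{prop 1.24} does not produce a $\sigma\in G_j'$. The paper removes these terms first by exploiting the freedom to change $\varphi$ by a coboundary: $\beta x_k^{(2^t)}dx_s = d(\beta x_k^{(2^t)}x_s) + \beta x_s\,d(x_k^{(2^t)})$, after which one may assume $\varphi\in(\mathfrak{m}^2\cap\widetilde{\mathfrak{m}}^{(j+1)})S\Omega^{1}$. This step is missing from your argument. (The paper also inverts the full isomorphism $i_\omega$ rather than only $i_{\omega(0)}$, which spares it your extra error term $D\lrcorner\omega_+$.)

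Second, the step you flag as "the main obstacle I foresee" is not a loose end to be patched but the actual content of the proof, and it cannot be discharged by simply citing Lemma~\ref{lem mm}: that lemma explicitly excludes the monomials $x_qx_s^{(2^t)}dx_i$ with $q,i\in I$ and $t=0$ (and their relatives), which genuinely cannot be moved into $\widetilde{\mathfrak{m}}$ by coboundary corrections --- indeed in Theorem~\ref{th sigma}(2) precisely these monomials generate nontrivial $dz_idz_j$ contributions. One must therefore prove that no such excluded monomials arise in $\sigma\omega-\omega$, which the paper does by a term-by-term analysis of the action of $\sigma$ on the three constituents of $\omega$: the summands $dz_kdz_r$ (each resulting term factors as a product of three elements times $dx$), the summands $df\,dx_k$ with $f\in\mathfrak{m}^{(2)}$ (a two-case analysis of $(\sigma f)dh_k$ and $(\sigma f-f)dx_k$), and the blocks of $\omega(0)$, where $(dh_l)^{(2)}$ is controlled by Lemma~\ref{lemma h} using the fact, extracted beforehand from $D\lrcorner\omega(0)\in\widetilde{\mathfrak{m}}^{(j+1)}S\Omega^{1}$, that $h_l$ contains no monomials $\lambda(dz_rdz_s)$ or $x_qx_k^{(2^t)}$. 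Without this analysis the conclusion $\widetilde{\varphi}\in\widetilde{\mathfrak{m}}^{(j+2)}S\Omega^{1}$, as opposed to the much weaker $\mathfrak{m}^{(j+2)}S\Omega^{1}$, is not established.
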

\begin{proof}
	Let $\varphi= \varphi_{1} + \varphi_{2}$, where $\varphi_{1} \in (\mathfrak{m}^{2}\cap \widetilde{\mathfrak{m}}^{(j+1)})S\Omega^{1}$, $\varphi_{2} \in \widetilde{\mathfrak{m}}^{(j+1)}S\Omega^{1}$, $\varphi_{2} \notin \mathfrak{m}^{2}S\Omega^{1}$. \\ Then $\varphi_{2}$ consists of $\beta x_{k}^{(2^{t})}dx_{s}$ and $t>1$terms.
	Correcting $\varphi_{2}$ on coboundary, $\varphi_{2}= d(\sum \beta x_{k}^{(2^{t})}x_{s}) - \sum x_{s}d(\beta x_{k}^{(2^{t})})= dg+ \widetilde{\varphi}_{2}$ and $\widetilde{\varphi}_{2} \in (\mathfrak{m}^{2}\cap \widetilde{\mathfrak{m}}^{(j+1)})S\Omega^{1},$ we can assume that $\varphi \in (\mathfrak{m}^{2}\cap \widetilde{\mathfrak{m}}^{(j+1)})S\Omega^{1}$. Since $i_{\omega}$ is an isomorphism of $O$ - modules, there is $D \in (\mathfrak{m}^{2}\cap \mathfrak{m}^{(j+1)})W = \mathfrak{g}_{j}'$ such that $D\lrcorner \omega=\varphi$. According to~\ref{prop 1.24} there is $\sigma \in G_{j}'$ such that \eqref{for 1.36} holds.
	
	Denote $Dx_{r}=h_{r}\in \mathfrak{m}^{2}\cap \mathfrak{m}^{(j+1)}$. The form $\omega$ is divided into $\omega(0)$ and summands of the form $dz_{k}dz_{r}$ and $dfdx_{k}$, $f \in \mathfrak{m}^{(2)}$. Since 
	$$D\lrcorner dz_{k}dz_{r} = x_{k}^{(2^{m_{k}}-1)} x_{r}^{(2^{m_{r} -1)}} h_{k} dx_{r} + x_{k}^{(2^{m_{k}} -1)} x_{r} {(2^{m_{r}} -1)} h_{r}dx_{k}$$
where each term can be decomposed into the product of the four elements and $dx$ and $D\lrcorner dfdx_{k} ~= ~h_{k}df ~+ ~D(f)dx_{k}$, where each summand can be decomposed into the product of three elements and $dx$, then $D\lrcorner (\omega - \omega(0))\in \widetilde{\mathfrak{m}}^{(j+1)}S\Omega^{1}$ as the form $\xi \notin \widetilde{\mathfrak{m}}^{(j+1)}S\Omega^{1}$ must contain $x_{r}^{(2^{l})}x_{s}^{(2^{t})}dx_{i}$ which is the product of two irreducible elements and $dx$, or $x_{q}^{(2)}dx_{i}$ which is the product of an irreducible element and $dx$. Thus, $D\lrcorner \omega\in \widetilde{\mathfrak{m}}^{(j+1)}S\Omega^{1}$ should be $D\lrcorner \omega(0)\in \widetilde{\mathfrak{m}}^{(j+1)}S\Omega^{1}$. Then if $dx_{i}^{(2)}$ is included in $\omega(0)$, then $h_{i}$ does not contain monomials of the form $x_{r}^{(2^{m_{r}-1})} x_{s}^{(2^{m_{s}-1})}= \lambda(dz_{r}dz_{s})$.
	By corollary~\ref{col 2.4} and Lemma~\ref{lem dx2} $\sigma\omega - \omega = d\psi$ for some $\psi \in \mathfrak{m}^{(j+1)}s\Omega^{1}$. 
	
	Now prove that $\psi \in \widetilde{\mathfrak{m}}^{(j+1)}S\Omega^{1}$. To check $dz_{k}dz_{r}$, we use the formula 
$$\sigma d(x_{k}^{(2^{m_{k}})}) - d(x_{k}^{(2^{m_{k}})}) = d ((\sigma x_{k})^{(2^{m_{k}})} - 
x_{k}^{(2^{m_{k}})}) = d ((x_{k} +h_{k})^{(2^{m_k})} - x_{k}^{(2^{m_k})}) = $$ 
$$ d\sum\limits_{l=1}^{2^{m_{k}}} x_{k}^{(2^{m_{k}}-l)}h_{k}^{(l)}= d\varphi_{k}$$
where $\varphi_{k} \in O(\mathscr{F})$, since $h_{k}\in \mathfrak{m}^{2}$. We find that $\sigma (dz_{k}dz_{r})- dz_{k}dz_{r} = d(\varphi_{k}d\varphi_{r} + \varphi_{k}dz_{r} + \varphi_{r}dz_{k})$ and $\varphi_{k}d\varphi_{r} + \varphi_{k}dz_{r} + \varphi_{r}dz_{k} = \psi_{1}$ and each term can be decomposed into the product of three elements, and $dx$, i.e. $\psi_{1}\in \widetilde{\mathfrak{m}}^{(j+1)}S\Omega^{1}.$ 
	
	Consider $dfdx_{k}$, $f \in \mathfrak{m}^{(2)}$. Let $\psi_2= \sigma (fdx_{k}) - fdx_{k}$. Then
$d\psi_{2}= \sigma (dfdx_{k}) - dfdx_{k}.$ 	By virtue of the Lemma~\ref{lem mm}, it is sufficient to check whether $\psi_{2}$ contains monomials of the form $ x_{q}x_{s}^{(2^{t})}dx_i$ and $ x_{q}{(2)}dx_{i}$ for $i, q\in I$. Since $\sigma (fdx_{k}) - fdx_{k} =(\sigma f)(\sigma dx_{k} - dx_{k}) + (\sigma f - f)dx_{k}= (\sigma f)dh_{k} + (\sigma f - f)dx_{k}\in \mathfrak{m}^{2}S\Omega^{1}$, then $\psi_{2}$ does not contain summands of the form $x_{q}^{(2)}dx_{i}$. If $(\sigma f)dh_{k}$ contains $x_{q}x_{s}^{(2^{t})} dx_{i}$, then there are two cases: 1) $dh_{k}$ contains $x_{s}^{(2^{t})}dx_{i}$ and $\sigma f$ contains $x_{q}$, and therefore, $f$ contains $x_{q}$;
2) $dh_{k}$ contains $x_{q}dx_{i}$ and $\sigma f$ contains $x_{s}^{(2^{t})}$. Then $h_{k}$ contains $x_{q}x_{i}$, $q\neq I$ and $\sigma f$ contains $x_{s}^{(2^{t})}$, hence $f$ contains $x_{s}^{(2^{t})}$ and $t>0$. In this case 
$(\sigma f)dh_{k} = x_{s}^{(2^{t})} d(x_{q}x_{i})+ \ldots = x_{q}x_{s}^{(2^{t})} dx_{i}+ x_{i}x_{s}^{(2^{t})} dx_{q} + \ldots = \\ 
dg + x_{q}x_{i}x_{s}^{(2^{t}-1)} dx_{s}+ \ldots $
and $x_{q}x_{i}x_{s}^{(2^{t}-1)} dx_{s}\in \widetilde{\mathfrak{m}}^{(2)} S\Omega^{1}$ 
since $t>0$. If $(\sigma f - f)dx_{i}$ contains $x_{q}x_{s}^{(2^{t})}dx_{i}$, then $\sigma f-f$ contains the product of two indecomposable elements $x_{q}x_{s}^{(2^{t})}$, which is only possible when $f$ contains $x_{k}^{(r)}$. ~Now 
$$\sigma x_{k}^{(r)} - x_{k}^{(r)}~= ~(x_{k} + h_{k})^{(r)}- x_{k}^{(r)} ~= \sum\limits_{s=1}^{r} x_{k}^{(r-s)} h_ {k}^{(s)}.$$ 
Therefore, if $\sigma x_{k}^{(r)} - x_{k}^{(r)}$ contains $x_{q}x_{s}^{(2^{t})}$, then $f$ contains $x_{q}$, but $f \in \mathfrak{m}^{(2)}$. Thus, we can assume that $\psi_{2} \in 
\widetilde{\mathfrak{m}}^{(j+1)}s\Omega^{1}$.
		
	It remains to check $\omega (0)$. ~We have 
$$\sigma (a_{l,l-1}dx_{l-1}dx_{l}+ dx_{l}^{(2)})~- ~(a_{l, l-1}dx_{l-1}dx_{l}+ dx_{l}^{(2)}) ~= $$
$$a_{l,l-1}d(h_{l-1}dx_{l} + h_{l}dx_{l-1}) + a_{l,l-1}dh_{l-1}dh_{l}+ d(h_{l}dx_{l}) + (dh_{l})^{(2)} = $$
$$ d(D\lrcorner(a_{l, l-1}dx_{l-1}dx_{l}+ dx_{l}^{(2)})) +  a_{l, l-1}d (h_{l-1}dh_{l}) + (dh_{l})^{(2)}.$$ 
	~From ~$D\lrcorner \omega(0)\in \widetilde{\mathfrak{m}}^{(j+1)} S\Omega^{1}$ it follows that 
	$$D\lrcorner(a_{l,l-1}dx_{l-1}dx_{l}+ dx_{l}^{(2)})\in \widetilde{\mathfrak{m}}^{(j+1)}S\Omega^{1}.$$
 Since each term in $h_{l-1}dh_{l}$ can be decomposed into a product of three elements and $dx$, $h_{l-1}dh_{l}\in \widetilde{\mathfrak{m}}^{(j+1)}S\Omega^{1}$. Thus, it is sufficient to investigate $(dh_{l})^{(2)}$. But due to Lemma~\ref{lemma h} $(dh_{l})^{(2)}= d\psi_{3}$, where $\psi_{3} \in \widetilde{\mathfrak{m}}^{(j+1)}S\Omega^{1}$, since $h_{l}$ does not contain monomials of the form $\lambda(dz_{r}dz_{s}),$  ~ $x_{q}x_{k}^{(2^{t})}$.
	For $s,k\notin I$ we obtain 
$$\sigma dx_{s}dx_{k} - dx_{s}dx_{k} = d (D\lrcorner dx_{s}dx_{k}) + \ + d (h_{s}dh_{k})$$ 
and $D\lrcorner dx_{s}dx_{k} + h_{s}dh_{k}\in \widetilde{\mathfrak{m}}^{(j+1)}S\Omega^{1}$.
	
	Lemma~\ref{lem 2.3} and the above guarantees that $\psi - D \lrcorner \omega = \psi - \varphi \in \widetilde{\mathfrak{m}}^{(j+2)}S\Omega^{1}$. Also, $\omega' - \sigma\omega = \omega' - \omega - (\sigma\omega - \omega) = d(\varphi - \psi)$.
\end{proof}

\begin{lemma}\label{col 2.5.1}
{\it If $\omega$ is a non-alternating Hamiltonian form and $\omega = \omega(0) + d\varphi + \eta$, where $\varphi \in \widetilde{\mathfrak{m}}^{(2)}S\Omega^{1}$ and $\eta \in H^{2}(\Omega)$, for any $\sigma \in G'$ occurs $\sigma(d\varphi + \eta) - (d\varphi + \eta) = d\psi$, where $\psi \in \widetilde{\mathfrak{m}}^{(2)}S\Omega^{1}$.}
\end{lemma}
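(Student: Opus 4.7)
The plan is to split the difference
\[
\sigma(d\varphi+\eta)-(d\varphi+\eta)=\bigl(\sigma d\varphi-d\varphi\bigr)+\bigl(\sigma\eta-\eta\bigr)
\]
and treat the two summands independently, recycling the monomial-tracking arguments already developed in the proof of Proposition~\ref{lem 2.5.1}. Each piece will be written as $d(\text{something})$, and I will show that after correcting by an exact 1-form the primitive lands in $\widetilde{\mathfrak{m}}^{(2)}S\Omega^{1}$.

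First I would handle $\sigma\eta-\eta$. Since $\eta\in H^{2}(\Omega)$ is a $K$-linear combination of the cohomology generators $dz_{i}dz_{j}$, it suffices to treat one summand $dz_{i}dz_{j}$. Because $\sigma x_{i}-x_{i}\in\mathfrak{m}^{2}$, the identity
\[
\sigma z_{i}-z_{i}=\sum_{s=1}^{2^{m_{i}}}x_{i}^{(2^{m_{i}}-s)}(\sigma x_{i}-x_{i})^{(s)}
\]
shows that $\varphi_{i}:=\sigma z_{i}-z_{i}\in\mathfrak{m}^{(2)}\cap O(\mathscr{F})$, as already noted in the proof of Proposition~\ref{lem 2.5.1}. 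Hence
\[
\sigma(dz_{i}dz_{j})-dz_{i}dz_{j}=d\bigl(\varphi_{i}d\varphi_{j}+\varphi_{i}dz_{j}+\varphi_{j}dz_{i}\bigr),
\]
and every monomial of the primitive is a product of at least three factors in $\mathfrak{m}$ together with one $dx$. Such a monomial cannot coincide with any of the excluded generators $x_{r}^{(2^{l})}x_{s}^{(2^{t})}dx_{i}$ or $x_{q}^{(2)}dx_{i}$ (which are products of only two indecomposable factors and $dx$), and so this primitive lies in $\widetilde{\mathfrak{m}}^{(2)}S\Omega^{1}$.

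Next I would handle $\sigma d\varphi-d\varphi=d(\sigma\varphi-\varphi)$. Decompose $\varphi=\sum f_{k}dx_{k}$ with $f_{k}\in\mathfrak{m}^{(2)}$ (we may assume this because $\varphi\in\widetilde{\mathfrak{m}}^{(2)}S\Omega^{1}\subseteq\mathfrak{m}^{(2)}S\Omega^{1}$). Writing $h_{k}=\sigma x_{k}-x_{k}\in\mathfrak{m}^{2}$, a summand-by-summand computation gives
\[
\sigma(f_{k}dx_{k})-f_{k}dx_{k}=(\sigma f_{k})dh_{k}+(\sigma f_{k}-f_{k})dx_{k},
\]
which is exactly the expression analysed for $dfdx_{k}$-type terms in the proof of Proposition~\ref{lem 2.5.1}. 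The same case distinction applies: the only monomials in this sum that could fall outside $\widetilde{\mathfrak{m}}^{(2)}S\Omega^{1}$ are of the two forbidden shapes $x_{r}^{(2^{l})}x_{s}^{(2^{t})}dx_{i}$ and $x_{q}^{(2)}dx_{i}$ with $i,q\in I$, and each such summand is exchanged, modulo an exact form, for an element of $\widetilde{\mathfrak{m}}^{(2)}S\Omega^{1}$ by Lemma~\ref{lem mm}. Collecting the primitives from both parts yields the required $\psi\in\widetilde{\mathfrak{m}}^{(2)}S\Omega^{1}$.

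The main obstacle is the bookkeeping in the second step: one must check that every potentially bad monomial appearing in $(\sigma f_{k})dh_{k}+(\sigma f_{k}-f_{k})dx_{k}$ really can be absorbed by Lemma~\ref{lem mm}, and here the hypothesis $\varphi\in\widetilde{\mathfrak{m}}^{(2)}S\Omega^{1}$ is essential, since it forbids $f_{k}$ from containing the dangerous monomials $x_{r}^{(2^{l})}x_{s}^{(2^{t})}$ (with suitable indices) and $\lambda(dz_{r}dz_{s})$ that would obstruct the reduction. Once this careful check is carried out along the lines already presented for the classical case in Proposition~\ref{lem 2.5.1}, the lemma follows.
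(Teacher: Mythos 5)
Your proof is correct and takes essentially the same route as the paper: the paper's own proof of Lemma~\ref{col 2.5.1} simply observes that the computations for $\sigma(dz_{k}dz_{r})-dz_{k}dz_{r}$ and $\sigma(dfdx_{k})-dfdx_{k}$ already carried out in the proof of Proposition~\ref{lem 2.5.1} produce primitives in $\widetilde{\mathfrak{m}}^{(j+1)}S\Omega^{1}$, and then sets $j=1$. Your decomposition into the $\eta$-part and the $d\varphi$-part, with the same monomial bookkeeping and the same appeal to Lemma~\ref{lem mm}, is precisely that argument written out in full.
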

\begin{proof}
	The proof of~\ref{lem 2.5.1} shows that for any $\sigma \in G'_{j}$, $j\geqslant 1$, $\sigma (dz_{k}dz_{r})- dz_{k}dz_{r}= d\psi_{1}$ is executed, where $\psi_{1}\in \widetilde{\mathfrak{m}}^{(j+1)}S\Omega^{1}$ , and $\sigma(dfdx_{k}) - dfdx_{k} = d\psi_{2}$, where $\psi_{2}\in \widetilde{\mathfrak{m}}^{(j+1)}S\Omega^{1}$. Take $j= 1$.
\end{proof}

Select two properties of $\omega = \omega(0)+ d\varphi+ \eta$, where $\varphi \in \mathfrak{m}^{(2)} S\Omega^{1}$ and $\eta \in H^{2} (\Omega)$
\begin{equation}\label{equ: eq1}
	\text{ there exists } ~i \in I \text{ such that } m_{i}>1, 
\end{equation}
\begin{equation}\label{equ: eq2}
	m_{i}=1 ~\text{ for any } \, i\in I \text{ and } \eta=\sum_{s,j \notin I} b_{sj}dz_{s}dz_{j}.
\end{equation}

Let us consider the automorphism $\sigma \in G'$ such that  
$$\sigma x_{i} = x_{i} + bx_{s}^{(2^{m_{s}-1})}x_{k}^{(2^{m_{k}-1})}.$$ 
\noindent Since the form $\omega(0)$ is one of canonical form, only two variants of $dx_{i}^{(2)}$ are possible
$$\omega_{1} (0)=\ldots + dx_{i}^{(2)} + \ldots,$$ 
$$\omega_{2} (0)= ~\ldots~+ dx_{i-1}dx_{i} + dx_{i}^{(2)} + \ldots .$$ 
By Lemma~\ref{lem dx2} we have 
$$\sigma \omega_{1} (0)= \omega_{1}(0) + d(bx_{s}^{(2^{m_{s}-1})}x_{k}^{(2^{m_{k}-1})} dx_{i}) + b^{2}dz_{s}dz_{k},$$
 $$\sigma \omega_{2} (0)= \omega_{2}(0) + d(bx_{s}^{(2^{m_{s}-1})} x_{k}^{(2^{m_{k}-1})}) (dx_{i-1} +dx_{i}) + b^{2} dz_{s}dz_{k}.$$ 
 If in the second case we apply automorphism $\sigma_{1}$, which translates $x_{i-1}$ to $x_{i-1} + bx_{s}^{(2^{m_{s}-1})} x_{k}^{(2^{m_{k}-1})}$, we get 
 $$\sigma_{1}\sigma \omega_{2} (0) = \omega_{2}(0) + \\ + d(bx_{s}^{(2^{m_{s}-1})}x_{k}^{(2^{m_{k}-1})}) dx_{i-1}+ b^{2} dz_{s}dz_{k}$$. 
 Thus, only the summand of the form remains 
$d(x_{s}^{(2^{l})}x_{k}^{(2^{t})}dx_{i-1}),$ $i-1 \in I$. Let $d(bx_{s}^{(2^{m_{s}-1})}x_{k}^{(2^{m_{k}-1})}dx_{i}) = d\varphi$, where $i \in I$. Note that $\varphi \notin \widetilde{\mathfrak{m}}^{(2)} S\Omega^{1}$ and prove the following theorem.

\begin{theorem}\label{th sigma}
{\it Let $\omega$ be a non-alternating Hamiltonian form, $\omega = \omega (0) + d\psi + \eta$, where $\eta \in H^{2} (\Omega)$, $\psi \in \mathfrak{m}^{(j+1)}s\Omega^{1}$, $j\geqslant 1$. 
	
	(1) If \eqref{equ: eq1} holds, then there exists an automorphism $\sigma \in G'$ such that $\sigma\omega= \omega(0) + d\widetilde{\psi} + \eta$, where $\widetilde{\psi} \in \widetilde{\mathfrak{m}}^{(j+1)}S\Omega^{1}$.
	
	(2) If $m_{r}= 1$ for all $r \in I$, then there is an automorphism $\sigma \in G'$ such that 
$$\sigma\omega= \omega(0) + \\ + d\widetilde{\psi} + \eta + \sum\limits_{q,i\in I}\sum\limits_{s  \notin I} b_{qsti}^{2^{m_{s} - t+1}} dz_{i}dz_{s}+ \sum\limits_{q,i \in I} b_{iq} {2}dz_{i}dz_{q},$$
 where $\widetilde{\psi} \in \widetilde{\mathfrak{m}}^{(j+1)} S\Omega^{1}$, $b_{qsti}x_{q}x_{s}^ {(2^{t})} dx_{i}$ or $b_{iq}x_{q}x_{i}dx_{s}$ is included in $\psi$.}
\end{theorem}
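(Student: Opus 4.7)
The plan is to reduce $\psi$ to an element of the distinguished set $\widetilde{\mathfrak{m}}^{(j+1)}S\Omega^{1}$ by iteratively applying admissible automorphisms from Corollary~\ref{col dx2}. First I would decompose $\psi = \psi_{g} + \psi_{b}$ with $\psi_{g} \in \widetilde{\mathfrak{m}}^{(j+1)}S\Omega^{1}$ and $\psi_{b}$ a sum of monomials from the excluded set, namely $x_{r}^{(2^{l})}x_{s}^{(2^{t})}dx_{i}$ with $i\in I$ and $x_{q}^{(2)}dx_{i}$ with $i,q\in I$. The contribution $d\psi_{g}$ already has the desired shape, and its residual behaviour under $G'$ at higher filtration is controlled by Proposition~\ref{lem 2.5.1}. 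The task therefore reduces to eliminating $\psi_{b}$.

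For each bad summand $bx_{r}^{(2^{l})}x_{s}^{(2^{t})}dx_{i}$ of $\psi_{b}$, I would apply the elementary automorphism $\sigma \in G'_{j}$ defined by $\sigma x_{i} = x_{i} + bx_{r}^{(2^{l})}x_{s}^{(2^{t})}$ and identity on the other generators. Since $i \in I$, the form $\omega(0)$ contains $dx_{i}^{(2)}$, so Corollary~\ref{col dx2} yields
$$\sigma(dx_{i}^{(2)}) = dx_{i}^{(2)} + d\bigl(bx_{r}^{(2^{l})}x_{s}^{(2^{t})}dx_{i}\bigr) + b^{2}x_{r}^{(2^{l+1}-1)}x_{s}^{(2^{t+1}-1)}dx_{r}dx_{s}.$$
The $d$-term cancels the chosen bad monomial of $d\psi$, while Corollary~\ref{col 2.4} (together with the detailed analysis already carried out in the proof of Proposition~\ref{lem 2.5.1}) shows that $\sigma$ acts on the remaining summands of $\omega$ by $d$-exact forms with potentials in $\widetilde{\mathfrak{m}}^{(j+2)}S\Omega^{1}$. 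Thus the net effect of $\sigma$ is to remove the chosen bad monomial from $\psi$ at the cost of inserting the extra term $b^{2}x_{r}^{(2^{l+1}-1)}x_{s}^{(2^{t+1}-1)}dx_{r}dx_{s}$ modulo $d(\widetilde{\mathfrak{m}}^{(j+2)}S\Omega^{1})$.

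The extra term falls into two regimes. If $l+1 < m_{r}$ or $t+1 < m_{s}$, it is $d$-exact with a potential like $b^{2}x_{r}^{(2^{l+1})}x_{s}^{(2^{t+1}-1)}dx_{s}$, but this potential is itself a bad monomial of strictly higher filtration that feeds into the next inductive step. If $l+1 = m_{r}$ and $t+1 = m_{s}$ simultaneously, the extra term equals the cohomology class $b^{2}dz_{r}dz_{s}$ and cannot be absorbed. Bad monomials of type $x_{q}^{(2)}dx_{i}$ are treated analogously. Under hypothesis (1), the existence of $i^{*}\in I$ with $m_{i^{*}}>1$, combined with Lemma~\ref{lem mm} applied to pre-simplify $\psi_{b}$, lets me arrange the killing automorphisms so that only the exact regime occurs, producing no cohomology residue. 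Under hypothesis (2), all $i\in I$ satisfy $m_{i}=1$, so an iteration $\sigma_{1},\sigma_{2},\ldots$ applied to $b_{qsti}x_{q}x_{s}^{(2^{t})}dx_{i}$ (with $q,i\in I$, $s\notin I$) is forced, after the repeated squarings induced by Corollary~\ref{col dx2}, to terminate in a contribution $b_{qsti}^{2^{m_{s}-t+1}}dz_{i}dz_{s}$; the monomials $b_{iq}x_{q}x_{i}dx_{s}$ are handled symmetrically and yield $b_{iq}^{2}dz_{i}dz_{q}$.

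Termination of the induction is guaranteed by $\mathfrak{m}^{(N)} = 0$ for $N$ sufficiently large, and the composition of the $\sigma_{k}$'s is an element of $G'$ because $G'$ is closed under the filtration by the subgroups $G'_{j}$. The main obstacle will be the bookkeeping under hypothesis (2): identifying which bad monomial generates which cohomology class, tracking the surviving coefficient after each squaring step (accounting for the precise exponent $2^{m_{s}-t+1}$), and ensuring that clearing one bad monomial does not reintroduce previously cleared ones. This is managed by processing bad monomials in order of increasing filtration degree and by using Lemma~\ref{lemma h} to control the squared differentials $(dh_{k})^{(2)}$ that inevitably arise from the automorphism action on $dx_{k}^{(2)}$.
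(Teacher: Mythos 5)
Your overall strategy coincides with the paper's: split $\psi$ into monomials inside and outside $\widetilde{\mathfrak{m}}^{(j+1)}S\Omega^{1}$ using Lemma~\ref{lem mm}, kill each excluded monomial $bx_{q}x_{s}^{(2^{t})}dx_{i}$ ($i\in I$) by the elementary automorphism $\sigma x_{i}=x_{i}+bx_{q}x_{s}^{(2^{t})}$, read off the residue $b^{2}x_{q}x_{s}^{(2^{t+1}-1)}dx_{q}dx_{s}$ from Corollary~\ref{col dx2}, control the side effects on $d\psi+\eta$ via Lemma~\ref{col 2.5.1} and Proposition~\ref{lem 2.5.1}, and iterate until the filtration runs out. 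Your part (2) is essentially the paper's argument: when $m_{r}=1$ for all $r\in I$ the doubling of exponents is forced and terminates in the classes $dz_{i}dz_{s}$, $dz_{i}dz_{q}$ with the stated coefficients.

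The gap is in part (1). The sentence claiming that the existence of $i^{*}\in I$ with $m_{i^{*}}>1$ ``lets me arrange the killing automorphisms so that only the exact regime occurs'' asserts precisely what has to be proved. Note first that the residue's potential $b^{2}x_{q}^{(2)}x_{s}^{(2^{t+1}-1)}dx_{s}$ is in fact a \emph{good} monomial for generic $(t,s)$ (you call it bad; it is excluded only when $t=0$ and $s\in I$, since $2^{t+1}-1$ is a power of $2$ only for $t=0$), and the genuinely delicate configurations are exactly the ones Lemma~\ref{lem mm} does not dispose of: $\varphi=x_{q}x_{s}^{(2^{t})}dx_{i}$ with $q,i\in I$ but $m_{q}=1$, and above all $\varphi=x_{s}x_{q}dx_{i}$ with all of $i,q,s\in I$, together with $\varphi_{1}=x_{q}^{(2)}dx_{i}$. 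The paper's proof spends most of its length on these: it transfers the monomial from $dx_{i}$ to $dx_{k}$ for a $k\in I$ with $m_{k}>1$ using a pair of automorphisms $\sigma_{1},\sigma_{2}$, then runs through the subcases $m_{q}>1$, $m_{s}>1$, $m_{q}=m_{s}=1$ with a specific terminating chain of replacements (e.g. $x_{s}x_{q}^{(2)}dx_{s}\rightarrow x_{s}^{(2)}x_{q}^{(3)}dx_{q}$ when $m_{s}>1$, or $x_{s}x_{q}^{(2)}dx_{q}\rightarrow x_{q}^{(3)}dx_{q}$ when $m_{s}=1$), and checks that $x_{q}^{(2)}dx_{i}$ forces $m_{q}>1$ and so reduces to the previous case. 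Without exhibiting such a chain you have not excluded the possibility that, under hypothesis \eqref{equ: eq1}, the iteration still terminates in a nonzero class $dz_{q}dz_{s}$; ruling that out is the entire content of statement (1).
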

\begin{proof}
	Let $\varphi = x_{l}^{(2^{r})}x_{s}^{(2^{t})}dx_{i}$, $I\in I$, $0\leqslant r < m_{l}$, $0\leqslant t < m_{s}$ and $\varphi_{1} = x_{q}^{(2)}dx_{i}$, $q\in I$. Except for $l\in I$, $r=0$ ($s\in I$, $t= 0$) and $m_{i}>1$, $i= l$, $r=0$, $t\leqslant 1$, $s\in I$ Lemma~\ref{lem mm} guarantees that $d\varphi= d\widetilde{\varphi}$, where $\widetilde{\varphi} \in \widetilde{\mathfrak{m}}^{(j+1)}S\Omega^{1}$. Therefore, consider $\varphi = x_{q}x_{s}^{(2^{t})}dx_{i}$, where $i,q\in I$. By virtue of the Lemma~\ref{col 2.5.1}, it is sufficient to follow only the action of $\sigma \in G'$ on $\omega(0)$.
	
	(1) Let $m_{q}>1$. Take $\sigma \in G'_{j}$ such that $\sigma x_{i} = x_{i} + x_{q}x_{s}^{(2^{t})}$ and if $\bar{a}_{ii}=a_{i+1, i+1}$, then $\sigma x_{i+1} = x_{i + 1} + x_{q}x_{s}^{(2^{t})}$. Note that for $D \in \mathfrak{g}_{j}'$ associated with $\sigma$ by the formula \eqref{for 1.36}, $D\lrcorner \omega(0) = x_{q}x_{s}^{(2^{t})}dx_{i}$. By corollary~\ref{col dx2} $\sigma \omega(0) = \omega(0) + d\varphi + x_{q}x_{s}^{(2^{t+1}-1)}dx_{q}dx_{s}$, but $x_{q}x_{s}^{(2^{t+1}-1)}dx_{q}dx_{s} = d(x_{q}^{(2)}x_{s}^{(2^{t+1}-1)}dx_{s}) = d\widetilde{\varphi}$ and $\widetilde{\varphi} \in \widetilde{\mathfrak{m}}^{(2j+1)}S\Omega^{1}$, if $\widetilde{\varphi}\neq x_{s}x_{q}^{(2)}dx_{s}$, where $s \in I$. That is $\varphi\neq x_{s}x_{q}dx_{i}$, where $s \in I$. 
	
	Let $m_{k}>1$ for some $k\in I$. Take $\sigma_{1},\sigma_{2} \in G'_{j}$ such that for the associated according to formula \eqref{for 1.36} $D_{1},D_{2} \in \mathfrak{g}_{j}'$, $D_{1}\lrcorner \omega(0) = x_{q}x_{s}^{(2^{t})} dx_{i}$ and $D_{2}\lrcorner \omega(0) = x_{q}x_{s}^{(2^{t})}dx_{k}$. Then by the corollary~\ref{col dx2} we obtain 
$$\sigma_{1}\sigma_{2} \omega(0) = \omega(0) + d(x_{q}x_{s}^{(2^{t})} dx_{i}) + d(x_{q}x_{s}^{(2^{t})} dx_{k}) + d\widetilde {\psi} = $$
$$\omega(0) + d\varphi + \\ + d(x_{q}x_{s}^{(2^{t})}dx_{k})+ d\widetilde{\psi}$$
where $\widetilde{\psi} \in \widetilde{\mathfrak{m}}^{(2)} S\Omega^{1}$ by Lemma~\ref{col 2.5.1}. Now $$d(x_{q}x_{s}^{(2^{t})}dx_{k}) ~ = ~x_{s}^{(2^{t})} dx_{k} dx_{q} ~+ x_{q}x_{s}^{(2^{t}-1)} dx_{k}dx_{s} =$$ $$= d ( x_{k}x_{s}^{(2^{t})} dx_{q} + x_{k}x_{q}x_{s}^{(2^{t}-1)} dx_{s})$$
where $x_{k}x_{q}x_{s}^{(2^{t}-1)} dx_{s} \in \widetilde{\mathfrak{m}}^{(j+1)} s\Omega^{1}$, except for $t= 0$, $s \in I$. That is $\varphi\neq x_{s}x_{q}dx_{i}$, where $s \in I$. The previous case holds for $x_{k}x_{s}^{(2^{t})}dx_{q}$. If $k=s$, then $d (x_{q}x_{s}^{(2^{t})} dx_{s}) = x_{s}^{(2^{t})}dx_{s}dx_{q}= d (x_{s}^{(2^{t}+1)} dx_{q})$ and $x_{s} {(2^{t}+1)}dx_{q} \in \widetilde{\mathfrak{m}}^{(j+1)}S\Omega^{1}$, except for $t= 0$.
		
	Now consider $\varphi= x_{s}x_{q}dx_{i}$, where $i,q, s\in I$. If $m_{q}>1$, then, as we have already found out, $\varphi$ can be replaced with $x_{s}x_{q}^{(2)}dx_{s}$. If $m_{s}>1$, then similarly $x_{s}x_{q}^{(2)} dx_{s}$ is replaced by $x_{s}^{(2)} x_{q}^{(3)} dx_{q}\in \widetilde{\mathfrak{m}}^{(j+1)} S\Omega^{1}$. If $m_{s}=1$, then replace $x_{s}x_{q}^{(2)} dx_{s}$ with $x_{s}x_{q}^{(2)}dx_{q}$ and $d (x_{s}x_{q}^{(2)} dx_{q}) =  x_{q}^{(2)} dx_ {s}dx_{q}= d(x_{q}^{(3)}dx_{q})$, where $x_{q}^{(3)}dx_{q} \in \widetilde{\mathfrak{m}}^{(j+1)}S\Omega^{1}$. If $m_{q}=m_{s}=1$ but there is $k\in I$ with $m_{k}>1$ then replace $x_{s}x_{q}dx_{i}$ with $x_{s}x_{q}dx_{k}$ and $d(x_{s}x_{q}dx_{k})= x_{s}dx_{q}dx_ {q} dx_{k} + x_{q}dx_{s}dx_{k} = d (x_{k}x_{s}dx_{q} + x_{k}x_{q}dx_{s})$. The case $\varphi= x_{k}x_{s}dx_{q}$, where $m_{k}>1$ has already been considered.
	
	The remaining case is $\varphi_{1} = x_{q}^{(2)} dx_{i}$, $q, i\in I$. Then $m_{q}>1$ and $d\varphi = x_{q}dx_{i}dx_{q} = d(x_{q}x_{i}dx_{q})$, which reduces to the previous case. 
	
(2) Let $m_{r}=1$ for all $r\in I$. Since $d\varphi \neq d\widetilde{\varphi}$   for $\widetilde{\varphi} \in \widetilde{\mathfrak{m}}^{(2)} S\Omega^{1}$, by virtue of the~\ref{lem 2.5.1} the degree of $d\varphi$ cannot be raised or lowered by transformations that do not change the cohomology class $\omega$. The only $\sigma \in G'$ that does not lower the degree of $bd\varphi$ results in $b^{2}x_{q}x_{s}^{(2^{t+1}-1)}dx_{q}dx_{s}$ (see corollary~\ref{col dx2}). If $s \notin I$, then using the chain of transformations, change $d(bx_{q}x_{s}^{(2^{t})}dx_{i})$ by $b^{2^{m_{s}-t+1}}dz_{q}dz_{s}$.
	If $s\in I$, then by Lemma~\ref{lem dx2} there is $\sigma \in G'$ such that $\sigma \omega(0) = \omega(0) + d(bx_{s}x_{q}dx_{i}) + b^{2}dz_{s}dz_{q}$.
	
	As a result, we have a transformation $\sigma \in G'$ that translates $\omega$ to $\omega(0) + d\widetilde{\psi} + \eta$ in the case of (1) and translates $\omega$ to $\omega(0) + d\widetilde{\psi} + \eta + \sum cdz_{q}dz_{s}$ in the case of (2). Now, if necessary, using Lemma~\ref{lem 2.5.1} raise the degree $\widetilde{\psi}$ to $j+1$.
\end{proof}  

\begin{lemma}\label{lem 2.5}
{\it Let $\omega, \omega'$ be two non-alternating Hamiltonian forms, and 
$\omega' - \omega = d\varphi+ \eta$, where $\varphi \in \widetilde{\mathfrak{m}}^{(j+1)}s\Omega^{1}$, $j\geqslant 1$, $\eta \in H^{2}(\Omega)$ and $\lambda(\eta)\in \mathfrak{m}^{(j+1)}$. Then if for $\omega$ and $\omega'$  \eqref{equ: eq1} or \eqref{equ: eq2} is holds, there exists $\sigma \in G_{j}'$ such that $\omega' - \sigma\omega = d\widetilde{\varphi} + \widetilde{\eta}$ for some $\widetilde{\varphi} \in \widetilde{\mathfrak{m}}^{(j+2)}S\Omega^{1}$ and $\lambda(\widetilde{\eta})\in \mathfrak{m}^{(j+2)}$.}
\end{lemma}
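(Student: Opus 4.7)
The plan is to extend Proposition~\ref{lem 2.5.1} (which handles the case $\eta=0$) by first constructing an automorphism $\sigma_{\eta}\in G'_{j}$ that reproduces $\eta$ up to higher-order residues, and then applying Proposition~\ref{lem 2.5.1} to the remaining coboundary part.

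Since $K$ is perfect, write $\eta=\sum b_{sr}^{2}\,dz_{s}dz_{r}$ with $b_{sr}\in K$; the hypothesis $\lambda(\eta)\in\mathfrak{m}^{(j+1)}$ forces each $b_{sr}x_{s}^{(2^{m_{s}-1})}x_{r}^{(2^{m_{r}-1})}\in\mathfrak{m}^{(j+1)}$. Fix an index $i_{0}\in I$, nonempty because $\omega$ is non-alternating, choosing $m_{i_{0}}>1$ in case~\eqref{equ: eq1}. For each pair $(s,r)$ with $b_{sr}\neq 0$, I would define $\sigma_{sr}\in G'_{j}$ by $\sigma_{sr}x_{i_{0}}=x_{i_{0}}+b_{sr}x_{s}^{(2^{m_{s}-1})}x_{r}^{(2^{m_{r}-1})}$, combined with the auxiliary twist from the proof of Theorem~\ref{th sigma} on $x_{i_{0}-1}$ if $\omega(0)$ contains the $M_{1}$-block $dx_{i_{0}-1}dx_{i_{0}}+dx_{i_{0}}^{(2)}$. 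By Corollary~\ref{col dx2}, $\sigma_{sr}\omega-\omega=b_{sr}^{2}dz_{s}dz_{r}+d\beta_{sr}+R_{sr}$, with $\beta_{sr}=b_{sr}x_{s}^{(2^{m_{s}-1})}x_{r}^{(2^{m_{r}-1})}dx_{i_{0}}$ and $R_{sr}=d\gamma_{sr}+\eta_{sr}$, where $\gamma_{sr}\in\widetilde{\mathfrak{m}}^{(j+2)}S\Omega^{1}$ and $\lambda(\eta_{sr})\in\mathfrak{m}^{(j+2)}$ (the bookkeeping being the same as in Lemmas~\ref{lem 2.3},~\ref{col 2.5.1} and Proposition~\ref{lem 2.5.1}). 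Hypothesis~\eqref{equ: eq2} ensures $s,r\notin I$, so Lemma~\ref{lem mm} rewrites $\beta_{sr}=d\psi_{sr}+\widetilde{\beta}_{sr}$ with $\widetilde{\beta}_{sr}\in\widetilde{\mathfrak{m}}^{(j+1)}S\Omega^{1}$; under~\eqref{equ: eq1} the same is obtained by applying $\sigma_{sr}$ in tandem with its analogue built on a second index $k\in I$ with $m_{k}>1$, as in the proof of Theorem~\ref{th sigma}, so that the unwanted piece of $d\beta_{sr}$ cancels against a corresponding piece of $d\beta'_{sr}$ up to a form in $\widetilde{\mathfrak{m}}^{(j+1)}S\Omega^{1}$.

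Composing all the $\sigma_{sr}$ yields $\sigma_{\eta}\in G'_{j}$ with $\sigma_{\eta}\omega-\omega=\eta+d\widetilde{\beta}+\eta''$ where $\widetilde{\beta}\in\widetilde{\mathfrak{m}}^{(j+1)}S\Omega^{1}$ and $\lambda(\eta'')\in\mathfrak{m}^{(j+2)}$. In characteristic $2$ this gives $\omega'-\sigma_{\eta}\omega=d(\varphi-\widetilde{\beta})+\eta''$, and $\varphi-\widetilde{\beta}\in\widetilde{\mathfrak{m}}^{(j+1)}S\Omega^{1}$. The two non-alternating Hamiltonian forms $\omega'-\eta''$ and $\sigma_{\eta}\omega$ then differ by an exact form from $d(\widetilde{\mathfrak{m}}^{(j+1)}S\Omega^{1})$, so Proposition~\ref{lem 2.5.1} supplies $\sigma_{1}\in G'_{j}$ with $(\omega'-\eta'')-\sigma_{1}\sigma_{\eta}\omega=d\widetilde{\varphi}$ for some $\widetilde{\varphi}\in\widetilde{\mathfrak{m}}^{(j+2)}S\Omega^{1}$. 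Setting $\sigma:=\sigma_{1}\sigma_{\eta}\in G'_{j}$ and $\widetilde{\eta}:=\eta''$ gives $\omega'-\sigma\omega=d\widetilde{\varphi}+\widetilde{\eta}$, as required.

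The main obstacle is that, a~priori, $\beta_{sr}$ has exactly the shape excluded from $\widetilde{\mathfrak{m}}$ (a monomial $x^{(2^{l})}x^{(2^{t})}dx_{i}$ with $i\in I$), so its coboundary is not manifestly of the desired type. The hypotheses~\eqref{equ: eq1} and~\eqref{equ: eq2} are calibrated precisely so that Lemma~\ref{lem mm}, together with the $M_{1}$-block compensation and the doubling trick from Theorem~\ref{th sigma}, pushes $\beta_{sr}$ back into $\widetilde{\mathfrak{m}}^{(j+1)}S\Omega^{1}$ modulo an exact form; the auxiliary verifications that $\lambda(\eta_{sr})\in\mathfrak{m}^{(j+2)}$ and that $\sigma_{sr}$ acts on $\omega-\omega(0)$ only at order $\geq j+2$ are the standard order estimates already established in Lemmas~\ref{lem 2.3} and~\ref{col 2.5.1}.
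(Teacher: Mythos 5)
Your overall strategy coincides with the paper's: first build an automorphism $\sigma_{\eta}\in G'_{j}$ that reproduces the class $\eta$ modulo $\mathfrak{m}^{(j+2)}$ together with a coboundary from $\widetilde{\mathfrak{m}}^{(j+1)}S\Omega^{1}$, then hand the purely exact remainder to Proposition~\ref{lem 2.5.1}; your final bookkeeping $\omega'-\sigma_{1}\sigma_{\eta}\omega=d\widetilde{\varphi}+\eta''$ is correct and is exactly how the paper concludes.

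The one step that fails as written is the construction of $\sigma_{sr}$ when the chosen index lies in an $M_{1}$-block. Corollary~\ref{col dx2} produces the class $b_{sr}^{2}dz_{s}dz_{r}$ only from a summand $dx_{i_{0}}^{(2)}$ of $\omega(0)$. But if $i_{0}\in I$ belongs to an $M_{1}$-block, the canonical form contains $dx_{i_{0}}dx_{i_{0}+1}+dx_{i_{0}+1}^{(2)}$ and \emph{no} $dx_{i_{0}}^{(2)}$ (it is $\overline{a}_{i_{0}i_{0}}=1$ in $M^{-1}(0)$ that places $i_{0}$ in $I$, while $\overline{a}_{i_{0}+1,i_{0}+1}=0$); perturbing $x_{i_{0}}$ alone then yields only an exact form and never the class $dz_{s}dz_{r}$, and your ``auxiliary twist on $x_{i_{0}-1}$'' is aimed at a block $dx_{i_{0}-1}dx_{i_{0}}+dx_{i_{0}}^{(2)}$ that cannot occur with $i_{0}\in I$. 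The paper sidesteps this by defining the perturbation through $D\lrcorner\omega=\lambda(\eta)dx_{i_{0}}$ and taking $\sigma_{1}$ from Proposition~\ref{prop 1.24}: on an $M_{1}$-block this forces $Dx_{i_{0}}=Dx_{i_{0}+1}=\lambda(\eta)$, so the class is produced by $(dDx_{i_{0}+1})^{(2)}$, the cross-terms cancel, and the surviving coboundary is $d\bigl(\lambda(\eta)\,dx_{i_{0}}\bigr)$ with $i_{0}\in I$ --- precisely the shape to which Theorem~\ref{th sigma} and Lemma~\ref{lem mm} then apply. With this correction (equivalently, act on the partner variable $x_{i_{0}+1}$ and put the compensating twist on $x_{i_{0}}$), your argument goes through and agrees with the paper's proof.
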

\begin{proof}
	We fix $i$ from the property \eqref{equ: eq1} in the first case and $i \in I$ if \eqref{equ: eq2} is holds. Since $i_{\omega}$ is an isomorphism of $O$ - modules, there is $D \in \mathfrak{g}_{j}'$ such that $D\lrcorner \omega= \lambda(\eta)dx_{i}$. Proposition~\ref{prop 1.24} clause guarantees the existence of $\sigma_{1} \in G_{j}'$ such that \eqref{for 1.36} is satisfied. By Lemma~\ref{lem 2.3} ~$\sigma_{1}\omega - \omega =  d\psi + \nu$ for some $\psi \in S\Omega^{1}$ and $\nu \in H^{2} (\Omega)$ such that $\lambda(\nu)\in \mathfrak{m}^{(j+1)}$, with $\lambda(\eta) - \lambda (\nu)\in \ \ in \mathfrak{m}^{(j+2)}$. 
	By theorem~\ref{th sigma}, we can assume that $\sigma_{1}\omega - \omega = d\widetilde{\psi} + \nu$, where $\widetilde{\psi} \in \widetilde{\mathfrak{m}}^{(j+1)}S\Omega^{1}$. Then from Proposition~\ref{lem 2.5.1} there is $\sigma_{2} \in G_{j}'$ such that $\sigma_{2}\sigma_{1}\omega - \sigma_{1}\omega = d\widetilde{\psi}_{1}$ where $\widetilde{\psi}_{1} \in \widetilde{\mathfrak{m}}^{(j+2)}S\Omega^{1}$ and $\widetilde{\psi}_{1}- D \lrcorner \omega = \widetilde{\psi}_{1}- \widetilde{\psi} - \varphi\in \widetilde{\mathfrak{m}}^{(j+2)}S\Omega^{1}$, by Lemma~\ref{lem 2.3}.
	We obtain that $\omega' - \sigma_{2}\sigma_{1}\omega = (\omega' - \omega) - (\sigma_{1}\omega - \omega)- (\sigma_{2} \sigma_{1}\omega - \sigma_{1} \omega) = d (\varphi - \widetilde {\psi} - \widetilde{\psi}_{1})+ \eta - \nu = d\widetilde{\varphi} + \widetilde{\eta}$, where $\lambda(\widetilde{\eta}) \in \mathfrak{m}^{(j+2)}$ and $\widetilde{\varphi} \in \widetilde{\mathfrak{m}}^{(j+2)}S\Omega^{1}$.
\end{proof}

\begin{collor}\label{col lem 2.5}
{\it let $\omega, \omega'$ be two non-alternating Hamiltonian forms, with $\omega' - \omega = d\varphi+ \eta$, where $\varphi \in \mathfrak{m}^{(j+1)}s\Omega^{1}$, $j\geqslant 1$, $\eta \in H^{2}(\Omega)$ and $\lambda(\eta)\in \mathfrak{m}^{(j+1)}$. Then if for $\omega$  \eqref{equ: eq1} holds, there exists $\sigma \in G_{j}'$ such that $\omega' - \sigma\omega = d\widetilde{\varphi} + \widetilde{\eta}$ for some $\widetilde{\varphi} \in \widetilde{\mathfrak{m}}^{(j+2)}S\Omega^{1}$ and $\lambda(\widetilde{\eta})\in \mathfrak{m}^{(j+2)}$.}
\end{collor}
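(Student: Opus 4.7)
The corollary differs from Lemma~\ref{lem 2.5} only in the hypothesis on $\varphi$: here $\varphi$ is allowed to lie in the larger module $\mathfrak{m}^{(j+1)}S\Omega^{1}$ rather than in $\widetilde{\mathfrak{m}}^{(j+1)}S\Omega^{1}$. The plan is to first construct an auxiliary automorphism $\sigma_{0}\in G'_{j}$ which absorbs the ``non-$\widetilde{\mathfrak{m}}$'' part of $\varphi$, and then to invoke Lemma~\ref{lem 2.5} directly.

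First I would decompose $\varphi = \varphi_{1} + \varphi_{2}$, where $\varphi_{2}\in \widetilde{\mathfrak{m}}^{(j+1)}S\Omega^{1}$ and $\varphi_{1}$ is a sum of the monomials excluded from $\widetilde{\mathfrak{m}}$ in Lemma~\ref{lem mm}, namely terms of the form $x_{r}^{(2^{l})}x_{s}^{(2^{t})}dx_{i}$ with $i\in I$ and $x_{q}^{(2)}dx_{i}$ with $i,q\in I$. The term-by-term construction used in the proof of Theorem~\ref{th sigma}(1), which is precisely where hypothesis \eqref{equ: eq1} is invoked, provides for each such excluded monomial a basic automorphism $x_{k}\mapsto x_{k}+c\,x_{r}^{(2^{l})}x_{s}^{(2^{t})}$ in $G'_{j}$ (possibly paired with a second move as in Lemma~\ref{lem dx2}) whose action on $\omega$ contributes the corresponding $d(\text{monomial})$ modulo a boundary in $d\widetilde{\mathfrak{m}}^{(j+1)}S\Omega^{1}$ and a class in $H^{2}(\Omega)$ with $\lambda$-image in $\mathfrak{m}^{(j+1)}$. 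Composing these basic automorphisms yields $\sigma_{0}\in G'_{j}$ with
$$\sigma_{0}\omega - \omega \;=\; d\varphi_{1} + d\widetilde{\psi} + \widetilde{\eta}',$$
where $\widetilde{\psi}\in \widetilde{\mathfrak{m}}^{(j+1)}S\Omega^{1}$ and $\lambda(\widetilde{\eta}')\in\mathfrak{m}^{(j+1)}$; the cross-terms arising from the composition lie in $\mathfrak{m}^{(2j+1)}S\Omega^{1}\subseteq\mathfrak{m}^{(j+2)}S\Omega^{1}$ and can, if necessary, be absorbed into $\widetilde{\psi}$ by one further application of Theorem~\ref{th sigma}(1).

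Subtracting gives $\omega' - \sigma_{0}\omega = d(\varphi_{2}-\widetilde{\psi}) + (\eta - \widetilde{\eta}')$, with $\varphi_{2}-\widetilde{\psi}\in\widetilde{\mathfrak{m}}^{(j+1)}S\Omega^{1}$ and the $\lambda$-image of the $H^{2}$-part in $\mathfrak{m}^{(j+1)}$. Since $G'_{j}\subseteq G'_{1}$ preserves $\omega(0)$, both $\sigma_{0}\omega$ and $\omega'$ have the same canonical part $\omega(0)$, and hence property \eqref{equ: eq1} — which depends only on $\omega(0)$ — transfers to the pair $(\sigma_{0}\omega,\omega')$. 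Applying Lemma~\ref{lem 2.5} to this pair produces $\sigma_{1}\in G'_{j}$ such that $\omega' - \sigma_{1}\sigma_{0}\omega = d\widetilde{\varphi}+\widetilde{\eta}$ with $\widetilde{\varphi}\in\widetilde{\mathfrak{m}}^{(j+2)}S\Omega^{1}$ and $\lambda(\widetilde{\eta})\in\mathfrak{m}^{(j+2)}$. Setting $\sigma=\sigma_{1}\sigma_{0}$, which lies in the subgroup $G'_{j}$, completes the proof.

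The main obstacle is the bookkeeping in the first step: verifying that when many basic automorphisms are composed, the boundary corrections (coming from Lemma~\ref{lem mm}) and the $H^{2}$-contributions (from Lemma~\ref{lem dx2} and Corollary~\ref{col dx2}) combine so that the residue lies in $d\widetilde{\mathfrak{m}}^{(j+1)}S\Omega^{1}+H^{2}(\Omega)$ with $\lambda$-image in $\mathfrak{m}^{(j+1)}$, and that no new ``bad'' monomials are produced along the way. This is essentially the same case-by-case analysis carried out inside the proof of Theorem~\ref{th sigma}(1); once it is done, the reduction to Lemma~\ref{lem 2.5} is mechanical.
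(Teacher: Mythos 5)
The paper states this corollary without proof, and your argument supplies exactly the intended derivation: under \eqref{equ: eq1} the construction in Theorem~\ref{th sigma}(1) (together with Lemma~\ref{lem mm}) removes the monomials of $\varphi$ lying outside $\widetilde{\mathfrak{m}}^{(j+1)}S\Omega^{1}$ by an automorphism $\sigma_{0}\in G'_{j}$ that fixes $\omega(0)$, after which Lemma~\ref{lem 2.5} applies to the pair $(\sigma_{0}\omega,\omega')$, whose difference is again of the required shape and which still satisfies \eqref{equ: eq1}. Your proof is correct; the only point to watch is that the degree-$(j+2)$ residues may again contain excluded monomials, so your ``one further application'' of Theorem~\ref{th sigma}(1) is really a finite iteration (terminating since $\mathfrak{m}^{(k)}=0$ for large $k$), which does not affect the conclusion.
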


Everywhere in what follows assume that for $\omega \in S\Omega^{2}$ the transformation $\sigma \in G'$ from the theorem~\ref{th sigma} is performed.

\begin{proposition}\label{lem 2.6}
{\it (1) If \eqref{equ: eq1} is satisfied for a non-alternating Hamiltonian form $\omega$, then its orbit with respect to $G_{j}'$, $j\geqslant 1$ consists of the forms 
	$$\left\lbrace \omega + d\varphi + \eta ~ | ~ \varphi \in \mathfrak{m}^{(j+1)}S\Omega^{1}, ~\eta \in H^{2} (\Omega), ~\lambda (\eta)\in \mathfrak{m}^{(j+1)} \right\rbrace.$$
	
	(2) If \eqref{equ: eq2} is satisfied for a non-alternating Hamiltonian form $\omega$, then in its orbit with respect to $G_{j}'$, $j\geqslant 1$ lie the forms 
	$$\left\lbrace \omega + d\varphi + \eta ~ | ~ \varphi \in \widetilde {\mathfrak{m}}^{(j+1)} S\Omega^{1}, ~\eta \in H^{2}(\Omega), ~\lambda(\eta)\in \mathfrak{m}^{(j+1)}, \text{ and \eqref{equ: eq2} holds }  \right\rbrace.$$
	
	(3) in the orbit of a non-alternating Hamiltonian form $\omega$ with respect to $G_{j}'$, $j\geqslant 1$ lie the forms 
	$\left\lbrace \omega + d\varphi ~ | ~ \varphi \in \widetilde{\mathfrak{m}}^{(j+1)}s\Omega^{1} \right\rbrace$.}
\end{proposition}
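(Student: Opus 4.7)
The plan is to reduce each of the three parts to the single-step improvement results established just above---Proposition~\ref{lem 2.5.1} for part~(3), Lemma~\ref{lem 2.5} for part~(2), and Corollary~\ref{col lem 2.5} for part~(1)---by successive approximation, exploiting the finite-dimensionality of $O(\mathscr{F})$ to guarantee termination. Since the flag $\mathscr{F}$ is finite and each height $m_i$ is finite, $\mathfrak{m}^{(N)}=0$ for all sufficiently large $N$, so any sequence of reductions pushing the error into deeper filtration pieces will produce zero error after finitely many steps.

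For the inclusion ``right-hand set $\subseteq$ orbit'' in each part, I fix a target $\omega' = \omega + d\varphi_0 + \eta_0$ in the proposed set and construct $\sigma \in G'_j$ with $\sigma\omega = \omega'$ as a finite product $\sigma_k\cdots\sigma_2\sigma_1$. For part~(3), Proposition~\ref{lem 2.5.1} furnishes $\sigma_1 \in G'_j$ with $\omega' - \sigma_1\omega = d\widetilde\varphi_1$ and $\widetilde\varphi_1 \in \widetilde{\mathfrak{m}}^{(j+2)}S\Omega^1$; reapplying the same proposition with $j$ replaced by $j+1$ yields $\sigma_2 \in G'_{j+1}$ pushing the error into $\widetilde{\mathfrak{m}}^{(j+3)}$, and so on, until the error vanishes. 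The product $\sigma = \sigma_k\cdots\sigma_1$ lies in $G'_j$ because each factor is in some $G'_{j+s} \subseteq G'_j$. Parts~(2) and~(1) follow the same pattern, using Lemma~\ref{lem 2.5} or Corollary~\ref{col lem 2.5} respectively in place of Proposition~\ref{lem 2.5.1}, each of which simultaneously reduces both the $d\varphi$~term and the $\eta$~term; the stronger input hypothesis allowed in part~(1), namely $\varphi_0 \in \mathfrak{m}^{(j+1)}S\Omega^1$ rather than $\widetilde{\mathfrak{m}}^{(j+1)}S\Omega^1$, is precisely what requires condition~\eqref{equ: eq1}.

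For the reverse inclusion ``orbit $\subseteq$ right-hand set'' in part~(1), I would apply Corollary~\ref{col 2.4} directly: since $\omega$ is closed and admissible automorphisms commute with $d$, the form $\sigma\omega - \omega$ is a closed $2$-form for any $\sigma \in G'_j$, so Corollary~\ref{col 2.4} gives $\sigma\omega - \omega = d\varphi + \eta$ with $\varphi \in \mathfrak{m}^{(j+1)}S\Omega^1$ and $\lambda(\eta) \in \mathfrak{m}^{(j+1)}$, placing $\sigma\omega$ in the stated set.

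The main obstacle will be the bookkeeping required in part~(2): at each step of the iteration we must verify that the intermediate form $\sigma_k\cdots\sigma_1\omega$ still satisfies property~\eqref{equ: eq2}, since Lemma~\ref{lem 2.5} demands this for both of its inputs. Tracing through the proof of Lemma~\ref{lem 2.5}, the auxiliary derivation $D$ is chosen so that $D\lrcorner\omega = \lambda(\eta)dx_i$ with $i\in I$; the assumption~\eqref{equ: eq2} ensures that $\lambda(\eta)$ is a sum of monomials in the variables $x_s, x_j$ with $s,j\notin I$, so the $\widetilde\eta$-correction produced at each step remains in $\langle dz_s\,dz_j : s,j\notin I\rangle$ and preserves~\eqref{equ: eq2}, allowing the induction to proceed.
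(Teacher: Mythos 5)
Your proposal is correct and follows essentially the same route as the paper: the paper proves the inclusion of the right-hand set into the orbit by a downward induction on the filtration level (starting from $k$ large enough that $\mathfrak{m}^{(k+1)}=0$), applying Corollary~\ref{col lem 2.5}, Lemma~\ref{lem 2.5}, or Proposition~\ref{lem 2.5.1} at each step exactly as your iteration $\sigma_k\cdots\sigma_1$ does, and it likewise obtains the reverse inclusion in part~(1) from Corollary~\ref{col 2.4}. Your explicit check that the intermediate forms in part~(2) continue to satisfy condition~\eqref{equ: eq2} addresses a point the paper passes over with ``proved similarly,'' and is a welcome addition rather than a divergence.
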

\begin{proof}
	The orbit $G_{j}' \omega$ is contained in the set of forms specified in (1) by virtue of the Corollary~\ref{col 2.4}. Prove that for any non-alternating Hamiltonian form $\omega \in S\Omega^{2}$ and an integer $k\geqslant 1$
	\begin{equation}\label{for 2.4}
	\left\lbrace \omega + d\varphi + \eta ~ | ~ \varphi \in \mathfrak{m}^{(k+1)} S\Omega^{1}, ~\eta \in H^{2} (\Omega), ~\lambda(\eta)\in \mathfrak{m}^{(k+1)} \right\rbrace\subseteq G_{k}' \omega 
	\end{equation}
	by induction on $k$ from above. For $k$ large enough $\mathfrak{m}^{(k+1)} =0$ and the set on the left of the inclusion \eqref{for 2.4} consists of one form $\omega$, so everything is obvious. Let \eqref{for 2.4} be proved for $k= j+1$, where $j\geqslant 1$. Let $\varphi \in \mathfrak{m}^{(j+1)}S\Omega^{1}$ and $\lambda(\eta)\in \mathfrak{m}^{(j+1)}$. By corollary~\ref{col lem 2.5} applied to $\omega$ and $\omega'= \omega + d\varphi + \eta$, there exists $\sigma \in G_{j}'$, $\widetilde{\varphi} \in \widetilde{\mathfrak{m}}^{(j+2)}S\Omega^{1}$ and $\widetilde{\eta} \in H^{2}(\Omega)$, $\lambda(\widetilde{\eta})\in \mathfrak{m}^{(j+2)}$ such that $\omega' - \sigma\omega = d\widetilde{\varphi} + \widetilde{\eta}$. By induction assumption $\omega' \in G_{j+1}' \sigma\omega \subseteq G_{j}' \omega$.
	
	The case (2) is proved similarly if we replace the corollary~\ref{col lem 2.5} with the Lemma~\ref{lem 2.5} and take $\varphi \in \widetilde{\mathfrak{m}}^{(j+1)} S\Omega^{1}$. The case (3) is also proved similarly if we replace the corollary~\ref{col lem 2.5} with the proposition~\ref{lem 2.5.1}, take $\varphi \in \widetilde{\mathfrak{m}}^{(j+1)} S\Omega^{1}$ and  $\eta =0$.
\end{proof}

\begin{proposition}
{\it Let for non-alternating Hamiltonian forms $\omega, \omega'$  \eqref{equ: eq1} be fulfilled. Then $\omega$ and $\omega'$ are conjugated with respect to $G'$ if and only if their images coincide in $s\Omega^{2} {\big /} \mathfrak{m}s\Omega^{2}$.}
\end{proposition}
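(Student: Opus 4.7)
The plan is to establish both implications as direct applications of Corollary~\ref{col 2.4} (to rewrite $\sigma\omega-\omega$ in controlled form) and of the orbit description in Proposition~\ref{lem 2.6}(1); no new machinery should be needed, since the hypothesis~\eqref{equ: eq1} on $\omega$ is tailor-made for part~(1) of that proposition.

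For the ``only if'' direction, I would suppose $\omega'=\sigma\omega$ with $\sigma\in G'=G'_{1}$ and apply Corollary~\ref{col 2.4} at $j=1$ to produce $\sigma\omega-\omega=d\varphi+\eta$ with $\varphi\in\mathfrak{m}^{(2)}S\Omega^{1}$ and $\lambda(\eta)\in\mathfrak{m}^{(2)}$. Two routine containments then finish: $d\varphi\in\mathfrak{m}S\Omega^{2}$ because the coefficients of $\varphi$ lie in $\mathfrak{m}^{(2)}$ and so their partial derivatives lie in $\mathfrak{m}$; and $\eta\in\mathfrak{m}S\Omega^{2}$ because each basis element $dz_{i}dz_{j}= x_{i}^{(2^{m_{i}}-1)}x_{j}^{(2^{m_{j}}-1)}\,dx_{i}dx_{j}$ has coefficient in $\mathfrak{m}^{(2)}$. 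Summing, $\omega'-\omega\in\mathfrak{m}S\Omega^{2}$, so the images agree.

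For the ``if'' direction, I would assume $\omega-\omega'\in\mathfrak{m}S\Omega^{2}$; reducing modulo $\mathfrak{m}$ forces the constant-coefficient parts to coincide, $\omega(0)=\omega'(0)$. Next I would invoke the canonical presentation given at the start of Section~2 to write $\omega=\omega(0)+d\varphi_{1}+\eta_{1}$ and $\omega'=\omega(0)+d\varphi_{2}+\eta_{2}$ with $\varphi_{i}\in\mathfrak{m}^{(2)}S\Omega^{1}$ and $\eta_{i}\in\langle dz_{r}dz_{s}\rangle\subset H^{2}(\Omega)$. The difference becomes $\omega'-\omega=d(\varphi_{2}-\varphi_{1})+(\eta_{2}-\eta_{1})$, and the formula $\lambda(dz_{r}dz_{s})= x_{r}^{(2^{m_{r}-1})}x_{s}^{(2^{m_{s}-1})}\in\mathfrak{m}^{(2)}$ makes $\lambda(\eta_{2}-\eta_{1})\in\mathfrak{m}^{(2)}$ automatic. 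Since $\omega$ satisfies~\eqref{equ: eq1}, Proposition~\ref{lem 2.6}(1) applied with $j=1$ identifies $G'\omega=G'_{1}\omega$ with precisely the set of forms $\omega+d\varphi+\eta$ of this shape, so $\omega'\in G'\omega$. The substantive content is already packaged in Corollary~\ref{col 2.4} and Proposition~\ref{lem 2.6}(1); the only real conceptual point is observing that the hypothesis forces $\omega(0)=\omega'(0)$, which is what kills the $(dx_{i})^{(2)}$-contributions in the cohomological remainder and lets $\eta_{2}-\eta_{1}$ land in the $\langle dz_{r}dz_{s}\rangle$-component where Proposition~\ref{lem 2.6}(1) applies.
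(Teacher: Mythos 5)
Your proposal is correct and follows essentially the same route as the paper: both directions ultimately reduce the difference $\omega'-\omega$ to the form $d\varphi+\eta$ with $\varphi\in\mathfrak{m}^{(2)}S\Omega^{1}$ and $\lambda(\eta)\in\mathfrak{m}^{(2)}$, and then invoke case (1) of Proposition~\ref{lem 2.6} with $j=1$. The only cosmetic differences are that the paper's ``only if'' direction is a one-line equivariance argument ($G'$ acts trivially on $S\Omega^{2}\big/\mathfrak{m}S\Omega^{2}$) rather than a pass through Corollary~\ref{col 2.4}, and its ``if'' direction kills the degree-one part of the primitive $\psi$ by observing that the hypothesis forces $b_{ij}=b_{ji}$, whereas you reach the same normalization via the canonical presentations of $\omega$ and $\omega'$ relative to the common $\omega(0)$.
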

\begin{proof}
	Since the mapping $S\Omega^{2} \rightarrow S\Omega^{2} {\big /} \mathfrak{m}S\Omega^{2}$ equivariant with respect to $G'$, and $G'$ acts trivially in the space $S\Omega^{2} {\big /} \mathfrak{m}S\Omega^{2}$, then the images of conjugated forms in the space are the same.
	
	Now, let $\omega, \omega'$ have the same initial terms. In particular, $\omega' - \omega = d\psi + \eta$, where $\psi \in \mathfrak{m}S\Omega^{1}$ and $\eta \in H^{2}(\Omega)$. Write $\psi= \sum\limits_{i, j=1}^{n} b_{ij}x_{i}dx_{j} + \varphi$, where $\varphi \in \mathfrak{m}^{(2)}S\Omega^{1}$, $b_{ij}\in K$. Then $d\psi \equiv \sum\limits_{i, j=1}^{n} b_{ij}dx_{i}dx_{j} ~(\text{mod}\, \mathfrak{m}S\Omega^{2})$. Since the images $\omega, \omega'$ in $S\Omega^{2} {\big /} \mathfrak{m}S\Omega^{2}$ coincide, $b_{ij} = b_{ji}$. Since $\varphi = \psi + d\left (\sum\limits_{i<j} b_{ij}x_{i}x_{j}\right) + \sum b_{ii}x_{i}dx_{i}$, $\omega' - \omega = d\varphi + \eta$, but already $\varphi \in \mathfrak{m}^{(2)}s\Omega{1}$. It remains to use the case (1) of Proposition ~\ref{lem 2.6} with the value $j = 1$.
\end{proof}	
\begin{theorem}\label{ass}
{\it Let $\omega$ be a non-alternating Hamiltonian form, 
	$$\omega = \omega(0) + d\varphi + \sum\limits_{i<j} b_{ij}dz_{i}dz_{j},$$
 where $\varphi \in \widetilde{\mathfrak{m}}^{(2)}S\Omega^{1}$ and $b_{ij}\in K$. Then 
	
	(1) if \eqref{equ: eq1} or \eqref{equ: eq2} is satisfied, then $\omega$ is conjugated with respect to $G'$ to the form $\omega(0)$.
	
	(2) $\omega$ is conjugated with respect to $G'$ to the form $\omega(0) + 
	\sum\limits_{i \in I}\sum\limits_{j \notin I} b_{ij}dz_{i}dz_{j} + \sum\limits_{i<j \in I} b_{ij}dz_{i}dz_{j}$.}
\end{theorem}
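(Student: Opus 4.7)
For part (1), the plan is to invoke Proposition~\ref{lem 2.6} directly. With $\omega-\omega(0)=d\varphi+\sum b_{ij}dz_{i}dz_{j}$ we have $\varphi\in\widetilde{\mathfrak{m}}^{(2)}S\Omega^{1}\subseteq\mathfrak{m}^{(2)}S\Omega^{1}$ and $\lambda\bigl(\sum b_{ij}dz_{i}dz_{j}\bigr)=\sum\sqrt{b_{ij}}\,\overline{x_{i}}\overline{x_{j}}\in\mathfrak{m}^{(2)}$. Under hypothesis \eqref{equ: eq1} this places $\omega$ inside the orbit of $\omega(0)$ described by Proposition~\ref{lem 2.6}(1) applied with $j=1$; under hypothesis \eqref{equ: eq2} the cohomology part already has the shape demanded by Proposition~\ref{lem 2.6}(2). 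In either case $\omega\in G'_{1}\cdot\omega(0)$, so $\omega$ is conjugate to $\omega(0)$.

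For part (2), split $\sum b_{ij}dz_{i}dz_{j}=\eta_{1}+\eta_{2}$, where $\eta_{1}$ collects terms having at least one index in $I$ and $\eta_{2}=\sum_{i<j,\,i,j\notin I}b_{ij}dz_{i}dz_{j}$. The plan has three stages. Stage~1: apply Proposition~\ref{lem 2.6}(3) with $j=1$ to produce $\sigma_{0}\in G'_{1}$ with $\sigma_{0}\omega=\omega-d\varphi=\omega(0)+\eta_{1}+\eta_{2}$. Stage~2: fix an index $q$ with $a_{qq}\neq 0$ (existing because $\omega_{0}$ is non-alternating), and for each pair $(i,j)$ with $i<j$, $i,j\notin I$, pick $c_{ij}\in K$ with $c_{ij}^{2}=b_{ij}/a_{qq}$ ($K$ is perfect of characteristic $2$). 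Define $\sigma\in G'_{1}$ by $\sigma x_{q}=x_{q}+\sum c_{ij}x_{i}^{(2^{m_{i}-1})}x_{j}^{(2^{m_{j}-1})}$ and $\sigma x_{k}=x_{k}$ for $k\neq q$. Lemma~\ref{lem dx2}(2), together with the additivity of the identity $(dg)^{(2)}=c^{2}dz_{i}dz_{j}$ from its proof (cross-terms $df_{ij}df_{kl}=d(f_{ij}df_{kl})$ being exact), yields $\sigma(\omega(0)+\eta_{1})=\omega(0)+\eta_{1}+\eta_{2}+d\Phi$, where every monomial of $\Phi$ either has at least three $x$-factors (and hence lies in $\widetilde{\mathfrak{m}}^{(2)}S\Omega^{1}$ automatically, since the exclusion list contains only one- and two-factor monomials) or is of the form $x_{i}^{(2^{m_{i}-1})}x_{j}^{(2^{m_{j}-1})}dx_{l}$ with $i,j\notin I$. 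For the latter, if $l\notin I$ it already lies in $\widetilde{\mathfrak{m}}^{(2)}S\Omega^{1}$, and if $l\in I$ then Lemma~\ref{lem mm} in the case ``$r,s\notin I$'' rewrites it as $d\psi+\tilde{\varphi}$ with $\tilde{\varphi}\in\widetilde{\mathfrak{m}}^{(2)}S\Omega^{1}$. Hence $d\Phi=d\tilde{\Phi}$ for some $\tilde{\Phi}\in\widetilde{\mathfrak{m}}^{(2)}S\Omega^{1}$. Stage~3: Proposition~\ref{lem 2.6}(3) applied once more kills $d\tilde{\Phi}$, giving $\omega(0)+\eta_{1}\sim\omega(0)+\eta_{1}+\eta_{2}$. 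Chaining the three stages yields $\omega\sim\omega(0)+\eta_{1}+\eta_{2}\sim\omega(0)+\eta_{1}$.

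The main obstacle is verifying the $\widetilde{\mathfrak{m}}^{(2)}$-membership of $\tilde{\Phi}$ in Stage~2. The difficulty is acute when the canonical matrix of $\omega_{0}$ has no $M_{1}$-block: then every $q$ with $a_{qq}\neq 0$ belongs to $I$, and the bare monomial $x_{i}^{(2^{m_{i}-1})}x_{j}^{(2^{m_{j}-1})}dx_{q}$ is excluded from $\widetilde{\mathfrak{m}}^{(2)}S\Omega^{1}$ by the very definition of the exclusion list. The rescue is that both factor indices $i,j$ lie outside $I$ by construction of the target pair, which is exactly the first alternative of Lemma~\ref{lem mm} and permits the rewriting. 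The same obstruction dictates the shape of the canonical form in (2): for a pair $(i,j)$ with $i\in I$ or $j\in I$, none of the cases of Lemma~\ref{lem mm} applies in general, the analogous cancellation of $b_{ij}dz_{i}dz_{j}$ is blocked, and the term must be retained in $\eta_{1}$—this being the precise reason why (2) is weaker than (1) and only becomes an equivalence to $\omega(0)$ under the additional hypothesis \eqref{equ: eq1} or \eqref{equ: eq2}.
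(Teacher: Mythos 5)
Your proposal is correct. For part (1) it coincides with the paper's argument, which is simply the observation that under \eqref{equ: eq1} the form $\omega$ lies in the $G'_{1}$-orbit of $\omega(0)$ described by Proposition~\ref{lem 2.6}(1) (since $\widetilde{\mathfrak{m}}^{(2)}S\Omega^{1}\subseteq\mathfrak{m}^{(2)}S\Omega^{1}$ and $\lambda\bigl(\sum b_{ij}dz_{i}dz_{j}\bigr)\in\mathfrak{m}^{(2)}$), and under \eqref{equ: eq2} it lies in the orbit given by Proposition~\ref{lem 2.6}(2). For part (2), however, the paper's one-line citation of Proposition~\ref{lem 2.6} is not literally sufficient in the case where neither \eqref{equ: eq1} nor \eqref{equ: eq2} holds and some $b_{ij}$ with an index in $I$ is nonzero: then neither $\omega$ nor the target form $\omega(0)+\eta_{1}$ satisfies \eqref{equ: eq2}, so parts (1) and (2) of Proposition~\ref{lem 2.6} do not apply, and part (3) only removes exact summands, not $\eta_{2}=\sum_{i<j,\,i,j\notin I}b_{ij}dz_{i}dz_{j}$. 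Your Stage~2 --- the explicit automorphism $\sigma x_{q}=x_{q}+\sum c_{ij}x_{i}^{(2^{m_{i}-1})}x_{j}^{(2^{m_{j}-1})}$ with $a_{qq}\neq 0$, $c_{ij}^{2}=b_{ij}/a_{qq}$, combined with Lemma~\ref{lem dx2}(2) to produce $\eta_{2}$ and with Lemma~\ref{lem mm} (case $r,s\notin I$) to push the residual potentials into $\widetilde{\mathfrak{m}}^{(2)}S\Omega^{1}$ --- supplies exactly the missing step, and your bookkeeping of the side effects of $\sigma$ on the $M_{1}$-block partner $dx_{q-1}dx_{q}$ and on $dz_{q}$-terms of $\eta_{1}$ is sound. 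So your route is more explicit than the paper's: it buys a self-contained verification of (2) in the genuinely exceptional case, at the cost of redoing by hand a cancellation that the paper delegates (somewhat optimistically) to the orbit description of Proposition~\ref{lem 2.6}; your closing remark correctly identifies why the terms of $\eta_{1}$ cannot be removed the same way, which is precisely the content distinguishing (2) from (1).
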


The theorem follows from Proposition ~\ref{lem 2.6}.

Let $\omega = \omega(0) + \sum\limits_{i<j} b_{ij}dz_{i}dz_{j}$, where $b_{ij}\in K$ and $\omega(0)$ have a canonical form, \eqref{equ: eq1} and \eqref{equ: eq2} do not hold, that is $m_{i}=1$ for all $i \in I$ and  $ b_{sr} = 0$ for $s, r \notin I$.

\begin{remark}
	\begin{itemize}
		\item[$1.$] If $\omega$ has a summand $dx_{i}dx_{j}$, $i, j \notin I$, with $m_{i} < m_{j}$ and summands $b_{si}dz_{s}dz_{i} + b_{sj}dz_{s}dz_{j}$, where $b_{si}, b_{sj} \neq 0$, $s \in I$, one can get rid of the last term by replacing the variables $x_{i} = x_{i} + (\tilde{b}_{sj}/\tilde{b}_{si})x_{j}$, where $\tilde{b}_{si}^{2^{m_{i}}} = b_{si}$ and $\tilde{b}_{sj}^{2^{m_{i}}} = b_{sj}$. This replacement does not change the canonical form of $\omega(0)$. 
		\item[$2.$] Form  $\omega = \omega(0) + \sum\limits_{i \in I} b_{ij}dz_{i}dz_{j}$, where $j \notin I$, is conjugate to the form $\omega = \omega(0) + c_{sj}dz_{s}dz_{j}$ with respect to $G'$, where $s=\min\{ i ~|~ i \in I, ~b_{ij} \neq 0 \}$ and $c_{sj}= \sum\limits_{i} b_{ij}$. Make a transformation $\sigma$, such that for corresponding $D \in \mathfrak{g}'$ ( see \eqref{for 1.36}), $D\lrcorner \omega = \lambda\left( \sum\limits_{i\neq s} b_{ij}dz_{i}dz_{j}\right) dx_{s} + \sum\limits_{i} \lambda( b_{ij}dz_{s}dz_{j}) dx_{i}$, $i \in I$.
		\item[$3.$] Form $\omega = \omega(0) + bdz_{i}dz_{s} + bdz_{j}dz_{s}$, where $i,j,s \in I$, is conjugate to the form $\omega = \omega(0) + bdz_{i}dz_{j}$ with respect to $G'$. Make the transformation $\sigma \in G'$ such that for corresponding  $D \in \mathfrak{g}'$ (\eqref{for 1.36}) ~$D\lrcorner \omega = \lambda(bdz_{i}dz_{s})dx_{j} + \lambda(bdz_{j}dz_{s})dx_{I} + \lambda(bdz_{I}dz_{j}) dx_{s}$.
	\end{itemize}
	In all cases, terms of the form $d\varphi$ are not essential by virtue of the theorem~\ref{ass}.
\end{remark}

\section{ The simplicity  of graded Lie algebras}

We write the form in canonical form 
\begin{equation}\label{kanon}
\omega= dx_{1}dx_{2}+\ldots +dx_{2r-1}dx_{2r}+  \varepsilon_{1}dx_{2}^{(2)}+ \ldots +\varepsilon_{2r-1}dx_{2r}^{(2)}  +dx_{2r+1}^{(2)}+\ldots +dx_{n}^{(2)}, 
\end{equation}
where $r$ is an integer satisfying the inequality $0\leqslant r\leqslant n / 2$, and $\varepsilon_{j}=0$ or $\varepsilon_{j}=1$.

 Recall that $P(n,\overline{m},\omega)$ is identified with the space $O(n,\overline{m})/K $ equipped with the Poisson bracket $~\{f,g \}= \sum\limits_{i,j=1}^{n}\overline{a}_{ij}\partial_{i}f\partial_{j}g \}$.  

\begin{theorem}
{\it If there is $x_{i}$ such that $m_{i}>1$ and $\overline{a}_{ii}\neq 0$, then $P(n,\overline{m},\omega)$ is a simple Lie algebra of dimension $2^{m}-1$. If $m_{i}=1$ for all $i$ such that $\overline{a}_{ii}\neq 0$, then for $n>3$ or for $n=2,3$, $\overline{m}\neq \overline{1}$,  $[P(n,\overline{m},\omega), P (n,\overline{m},\omega)]]$ is a simple Lie algebra of dimension $2^{m}-2$.}
\end{theorem}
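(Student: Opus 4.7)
The plan is to identify $P(n,\overline m,\omega)$ with $O(n,\overline m)/K$ via $f\mapsto D_f$, equipped with the Poisson bracket $\{f,g\}=\sum \overline a_{ij}\partial_if\,\partial_jg$. The dimension statement is then immediate from $\dim O(n,\overline m)=\prod 2^{m_i}=2^m$.

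The central calculation, which drives the case split of the theorem, is the coefficient of the top monomial $x^{(\tau)}$ (with $\tau_i=2^{m_i}-1$) in $\{f,g\}$. Since each $\tau_i$ has all binary digits equal to $1$, Lucas' theorem gives $\binom{\tau}{\alpha}\equiv 1\pmod{2}$ for every $\alpha\leq\tau$, so the coefficient equals $\sum_{i,j}\overline a_{ij}S_{ij}(f,g)$ with $S_{ij}(f,g)=\sum_{\gamma+\delta=\tau+\varepsilon_i+\varepsilon_j,\,\gamma,\delta\leq\tau}c^f_\gamma c^g_\delta$. For $i\neq j$ the constraints $\gamma_k+\delta_k=\tau_k+[k{=}i]+[k{=}j]$ force $\gamma_i,\gamma_j,\delta_i,\delta_j\geq 1$ automatically, so each unordered pair $\{i,j\}$ contributes $2\overline a_{ij}S_{ij}(f,g)=0$ in characteristic $2$. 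The diagonal term $\overline a_{ii}S_{ii}(f,g)$ requires $\gamma_i+\delta_i=2^{m_i}+1$, impossible for $m_i=1$. Hence $x^{(\tau)}\in[P,P]$ exactly when some $i$ satisfies $m_i\geq 2$ and $\overline a_{ii}\neq 0$, explaining the dichotomy between dimensions $2^m-1$ and $2^m-2$; in case 2 the same analysis yields $[P,P]=\bigoplus_{k=-1}^{|\tau|-3}P_k$.

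For simplicity itself, let $I$ be a nonzero ideal of $P$ (case 1) or of $[P,P]$ (case 2). Since $\{x_i,f\}=\sum_j\overline a_{ij}\partial_jf$ and $(\overline a_{ij})$ is invertible, $I$ is stable under every $\partial_j$; iterating derivatives on any nonzero $f\in I$ lowers the polynomial degree until $I\cap P_{-1}\neq 0$. The $P_0$-module $P_{-1}$ is irreducible: the brackets $\{x_ix_j,x_k\}=\overline a_{ki}x_j+\overline a_{kj}x_i$ and $\{x_i^{(2)},x_k\}=\overline a_{ki}x_i$ span enough endomorphisms of $P_{-1}$ (by nondegeneracy of $\overline a$) to preclude any proper invariant subspace, so $P_{-1}\subseteq I$. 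Using $[P_{-1},P_k]=P_{k-1}$ for each admissible $k$ (valid since $P_{|\tau|-1}=0$) we obtain $I\supseteq\bigoplus_{k=-1}^{|\tau|-3}P_k$ after bracketing $P_{-1}$ with $P$. In case 2 this already exhausts $[P,P]$, so $I=[P,P]$. In case 1 one explicit higher bracket finishes the argument: choosing $i$ with $m_i\geq 2$ and $\overline a_{ii}\neq 0$, a direct computation yields $\{x_i^{(3)},x^{(\tau-\varepsilon_i)}\}=\overline a_{ii}\,x^{(\tau)}$, and since $x^{(\tau-\varepsilon_i)}\in P_{|\tau|-3}\subseteq I$ while $x_i^{(3)}\in P$, we conclude $x^{(\tau)}\in[P,I]\subseteq I$, hence $I=P$.

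The main obstacle I anticipate is verifying the irreducibility of $P_{-1}$ as a $P_0$-module together with the availability of the ascending bracket producing $x^{(\tau)}$; both fail precisely at the excluded configurations ($n=2,3$ with $\overline m=\overline 1$, where $P_0$ is too small and no element $x_i^{(3)}$ exists). For instance, at $n=2$, $\overline m=\overline 1$, $\omega=dx_1dx_2$ one verifies directly that $[P,P]=\langle x_1,x_2\rangle$ is $2$-dimensional abelian, not simple, matching the exclusion built into the hypothesis.
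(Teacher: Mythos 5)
Your dimension count, the Lucas--theorem computation showing that the coefficient of $x^{(\tau)}$ in $\{f,g\}$ survives only through the diagonal terms $\overline{a}_{ii}$ with $m_i\geq 2$ (the off-diagonal contributions cancelling in pairs in characteristic $2$), and your treatment of case 1 (ideal meets $P_{-1}$, climb the grading, finish with an explicit bracket $\{x_i^{(3)},x^{(\tau-\varepsilon_i)}\}=\overline{a}_{ii}x^{(\tau)}$) all match the paper's argument, which uses $\{x_k^{(2)},x^{(\delta)}\}=\overline{a}_{kk}x^{(\delta)}$ for the last step. The genuine gap is in case 2, at the top of the grading. There $I$ is a nonzero ideal of $[P,P]$ only, not of $P$, while the top component $P_{|\tau|-2}=\langle x^{(\tau)}\rangle$ is \emph{not} contained in $[P,P]$ in case 2; hence the identity $[P_{-1},P_{|\tau|-2}]=P_{|\tau|-3}$ cannot be invoked to put $P_{|\tau|-3}$ into $I$, and the legitimate bracketing only yields $P_k\subseteq I$ for $k\leq |\tau|-4$. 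The top homogeneous component $P_{|\tau|-3}$ of $[P,P]$ has to be produced by bracketing elements of $I\cap P_0$ (such as $x_ix_k$) against elements of $P_{|\tau|-3}\subseteq[P,P]$ (such as $x^{(\tau-\varepsilon_k)}$) and proving that this action is surjective. That is exactly where the paper spends the entire second half of its proof, with separate computations for $n>3$, $n=3$ and $n=2$, and it is the only place where the hypotheses $n>3$ or $\overline{m}\neq\overline{1}$ enter. Your case-2 argument never uses those hypotheses and would therefore prove too much: for $n=3$, $\overline{m}=\overline{1}$, $\omega=dx_1dx_2+dx_3^{(2)}$ one has $[P,P]=P_{-1}\oplus P_0$ with $P_{-1}$ a proper nonzero ideal, so the missing step is not a formality.

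A secondary point: like the paper, you assert rather than prove the irreducibility of $P_{-1}$ as a $P_0$-module. The operators you list show that a nonzero invariant subspace either is all of $P_{-1}$ or lies inside the isotropic hyperplane $E^0$ of the dual form, so what really needs checking is that $E^0$ itself is not invariant; this is clear for $n\geq 3$ but for $n=2$ it depends on which $x_i^{(2)}$ are present in $P_0$, so your parenthetical claim that irreducibility fails ``precisely at'' $\overline{m}=\overline{1}$ is not accurate, and the small-$n$ instances of this step deserve explicit verification.
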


\begin{proof}
	Denote $P (n,\overline{m},\omega) = L$. Let $L=L_{-1}+L_{0}+\ldots +L_{r}$ be the standard grading of $L$. Let $y_{i}\in L$ be such that $ad\, y_{i}=\partial_{i}$, $i=1,\ldots, n$. Suppose that $I$ is a nonzero ideal in $L$ and $0\neq f\in I$. By commuting $f$ with $y_{i}$ we obtain that $I\cap L_{-1}\neq 0$. Now $L_{0}$-module $L_{-1}$ irreducibility implies that $L_{-1}\subset I$. Hence we obtain, $L_{i}\subset I$ for $i<r$.
	
	Let $\delta=(2^{m_{1}}-1, \ldots, 2^{m_{n}}-1)$. If $\overline{a}_{kk}\neq 0$, then $\{x_{k}^{(2)},x^{(\delta)}\}=x^{(\delta)}\in I$. \smallskip	
	We obtain that if there is $i$ such that $m_{i}>1$ and $\overline{a}_{ii}\neq 0$, then $I=L$, i.e. the algebra is simple.
	
	Suppose that $m_{i}=1$ if $\overline{a}_{ii}\neq 0$. Then $x_{i}^{(2)} \notin L$. Let's check whether the element $x^{(\delta)}$ is in $I$. To do this, we prove that for any monomials 
$f= x^{(\alpha)}, g= x^{(\beta)}$, $\partial_{s}f\partial_{k}g+ \partial_{k}f\partial_{s}g \neq x^{(\delta)}.$	
\begin{equation}
	\partial_{s}f\partial_{k}g= x^{(\alpha - \varepsilon_{s})}\cdot x^{(\beta-\varepsilon_{k})}= 
	\prod\limits_{j\neq s,k}\dbinom{\alpha_{j}+\beta_{j}}{\alpha_{j}}\dbinom{\alpha_{k}+\beta_{k}-1}{\alpha_{k}}\dbinom{\alpha_{s}+\beta_{s}-1}{\alpha_{s}-1} x^{(\delta)},
\end{equation}
that is $\alpha_{j}+\beta_{j}=2^{m_{j}}-1$, $\alpha_{k}+\beta_{k}-1=2^{m_{k}}-1$ and $\alpha_{s}+\beta_{s}-1=2^{m_{s}}-1$. 
	Obviously, $\dbinom{2^{m_{j}} - 1}{\alpha_{j}}=\dbinom{2^{m_{k}} - 1} {\alpha_{k}} = \dbinom{2^{m_{s} - 1}} {\alpha_{s}-1}=1$. Therefore, $\partial_{s}f\partial_{k}g+ \partial_{k}f\partial_{s}g = (1+1) x^{(\delta)} =0$. From here we get that 			
	$$\{f, g\}= \sum\limits_{s, j=1}^{n}\overline{a}_{sj}\partial_{s}f\partial_{j}g = \sum\limits_ {\overline{a}_{ii}\neq 0} \partial_{i}f\partial_{i}g,$$	
and since $m_{i}=1$, either $\partial_{i}f=0$, or $\partial_{i}g=0$, or $x_{i}^{2}=0$ occurs.		
	Thus, $x^{(\delta)}\notin I$. So $L$ is not a simple algebra and $[L, L] = \langle x^{(\alpha)}, ~\alpha\neq \delta \rangle $. 	\newline
	Let now $I$ is a non-zero ideal of $[L, L]$. We have $L_{i}\subset I$ for $i<r-1$. 
If $n>3$, then there are different $x_{i}x_{k}, x_{i+1}x_{k}\in I$, where $i$ is odd. It follows from \eqref{kanon} that
    \begin{equation*}
	\{ x^{(\delta - \varepsilon_{k})}, x_{k}x_{i} \}= \overline{a}_{ii}x^{(\delta - \varepsilon_{i})}+ \overline{a}_{i,i+1}x^{(\delta - \varepsilon_{i+1})},
	\end{equation*}
	\begin{equation*}
	\{ x^{(\delta - \varepsilon_{k})}, x_{k}x_{i+1} \}= \overline{a}_{i+1,i+1}x^{(\delta - \varepsilon_{i+1})}+ \overline{a}_{i,i+1}x^{(\delta - \varepsilon_{i})}
	\end{equation*}
and $\overline{a}_{ii}, \overline{a}_{i+1,i+1}, \overline{a}_{i,i+1}$ are not simultaneously equal zero and one. Thus, $I=[L, L]$.\newline
If $n=3$ and $\overline{m}\neq \overline{1}$, then $x_{1}^{(\delta_{1})}x_{2}^{(\delta_{2})} = \{ x_{1}^{(\delta_{1})}x_{3}, x_{2}^{(\delta_{2})}x_{3} \}$, $x_{1}^{(\delta_{1}-1)}x_{2}^{(\delta_{2})}x_{3} = \{ x_{1}^{(\delta_{1}-1)}x_{2}^{(\delta_{2})}x_{3}, x_{1}x_{2} \}$ and $$x_{1}^{(\delta_{1})}x_{2}^{(\delta_{2}-1)}x_{3} = \{ x_{1}^{(\delta_{1})}x_{2}^{(\delta_{2}-1)}x_{3}, x_{1}x_{2} \} + a_{22}x_{1}^{(\delta_{1}-1)}x_{2}^{(\delta_{2})}x_{3}$$.
That is $I=[L, L]$.\newline
	If $n=2$ and $m_{2}\neq 1$, then $$x_{2}^{(\delta_{2})} = \{ x_{1}x_{2}^{(\delta_{2}-1)}, x_{2}^{(2)} \}$$ and $x_{1}x_{2}^{(\delta_{2}-1)} = \{ x_{1}x_{2}^{(\delta_{2}-1)}, x_{2}x_{2} \} + x_{2}^{(\delta_{2})}$. That is $I=[L, L]$.
Hence, $[L, L]$ is a simple algebra of dimension $2^{m}-2$.
\end{proof}

\section{ The invariance of the standard filtration}

Let $P^{(1)}(n,\overline{m}, \omega)\subseteq \mathscr{L} \subseteq \widetilde{P}(n,\overline{m}, \omega)$, i.e. $\mathscr{L}$ be a non-alternating Hamiltonian Lie algebra, $\{\mathscr{L}_{i}\}$ standard filtering and $L =\text{gr}\,\mathscr{L} = \bigoplus L_{i}$ its associated graded Lie algebra, which is a non-alternating Hamiltonian Lie algebra corresponding to the form $\omega(0)$. $\widetilde{P} (n,\overline{m}, \omega)$ is identified with the algebra $\widetilde{O} (\mathscr{F})\big/ K$, and $P (n,\overline{m}, \omega)$ is identified with the algebra $O (\mathscr{F})\big/ K$ equipped with Poisson bracket. In what follows we assume that
\begin{equation}\label{size}
	\begin{array}{c}
	n>4 \text{ or } \\
	n=2,3 \text{ and } m_{i}>1, m_{j}>1 \text{ for } I\neq j, \text{ or } \\
	n=4 \text{ and } m_{i}>1, m_{j}>1,m_{k}>1 \text{ for various } i,j, k.
	\end{array}
\end{equation}

Further, in the graded case, we will use the dual form $\overline{\omega} (x, y)$ on $E$, which coincides with the Poisson bracket $\{ x, y \}$.

Similarly ~\cite{Sk2} we introduce an invariant set 
$$\mathfrak{N}(\mathscr{L})= \{ D\in \mathscr{L} ~|~ (\text{ad}\,D)(\text{ad}\,D') \text{ nilpotent for any } D'\in \mathscr{L} \}.$$
Next we need a number of lemmas proved for the classical Hamiltonian case in ~\cite{Sk2}, Chapter III.
\begin{lemma}
{\it For $\mathfrak{N}(\mathscr{L})$ the following inclusions $\mathscr{L}_{2} \subset \mathfrak{N}(\mathscr{L}) \subset \mathscr{L}_{0}$ are fulfilled.}
\end{lemma}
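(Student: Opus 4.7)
The plan is to prove the two inclusions separately. The first follows from filtration arithmetic; the second requires reduction to the associated graded Lie algebra together with an explicit witness to non-nilpotence.

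\textbf{First inclusion} $\mathscr{L}_{2}\subset\mathfrak{N}(\mathscr{L})$. I would fix $D\in\mathscr{L}_{2}$ and an arbitrary $D'\in\mathscr{L}_{k}$ (with $k\geqslant -1$, since $\mathscr{L}=\mathscr{L}_{-1}$). For $X\in\mathscr{L}_{j}$, $[D',X]\in\mathscr{L}_{k+j}$ and then $[D,[D',X]]\in\mathscr{L}_{k+j+2}$, so each application of $(\operatorname{ad}D)(\operatorname{ad}D')$ raises the filtration index by $k+2\geqslant 1$. Because $\mathscr{L}_{s}=0$ for sufficiently large $s$, iterating enough times annihilates $X$, so $(\operatorname{ad}D)(\operatorname{ad}D')$ is nilpotent and $D\in\mathfrak{N}(\mathscr{L})$.

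\textbf{Second inclusion} $\mathfrak{N}(\mathscr{L})\subset\mathscr{L}_{0}$. I would argue by contradiction: assume $D\in\mathfrak{N}(\mathscr{L})\setminus\mathscr{L}_{0}$, so that the image $\bar{D}\in L_{-1}=\mathscr{L}_{-1}/\mathscr{L}_{0}$ is nonzero. The plan is to produce $D^{*}\in\mathscr{L}_{1}$ such that the associated graded operator $(\operatorname{ad}\bar{D})(\operatorname{ad}\bar{D^{*}})$ on $L$ is not nilpotent. Since $D\in\mathscr{L}_{-1}$ and $D^{*}\in\mathscr{L}_{1}$, the composition $A=(\operatorname{ad}D)(\operatorname{ad}D^{*})$ preserves the filtration and its induced operator on $\operatorname{gr}\mathscr{L}=L$ is exactly $\bar{A}=(\operatorname{ad}\bar{D})(\operatorname{ad}\bar{D^{*}})$. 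Any relation $A^{N}=0$ descends to $\bar{A}^{N}=0$, so non-nilpotence of $\bar{A}$ forces non-nilpotence of $A$, contradicting $D\in\mathfrak{N}(\mathscr{L})$.

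The construction of $D^{*}$ is carried out in the graded non-alternating Hamiltonian algebra $L$ corresponding to the form $\omega_{0}$. After passing to a basis coordinated with $\mathscr{F}$ that puts $\omega_{0}$ in the canonical form of Theorem~\ref{lem5.2}, I would reduce to the case $\bar{D}=x_{i}$ for some coordinate variable (using the transitivity of the flag-preserving isometry group on nonzero elements of a given weight). An explicit cubic (possibly divided-power) Hamiltonian $D^{*}\in L_{1}$ is then selected so that $(\operatorname{ad}\bar{D})(\operatorname{ad}\bar{D^{*}})$ has a nonzero eigenvalue on $L_{-1}$. For instance, in the orthonormal case with $n\geqslant 3$, the choice $D^{*}=x_{i}x_{j}x_{k}$ (distinct indices) makes $(\operatorname{ad}x_{i})(\operatorname{ad}(x_{i}x_{j}x_{k}))$ act on $\langle x_{j},x_{k}\rangle$ by the matrix $\bigl(\begin{smallmatrix}0&1\\1&0\end{smallmatrix}\bigr)$, whose square is the identity in characteristic $2$. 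For $n=2$ with some $m_{i}>1$, the substitute $D^{*}=x_{1}x_{2}^{(2)}$ yields a rank-one non-nilpotent restriction.

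The main obstacle lies in the case analysis needed to cover every type of block in the canonical form of $\omega_{0}$ ($M_{0}$, $M_{1}$ and diagonal $1$'s), together with the allowed height patterns, producing in each case an explicit element $D^{*}\in L_{1}$ for which the graded operator has a nonzero eigenvalue. The hypothesis \eqref{size} is precisely what guarantees enough room (three distinct variables, or enough heights exceeding~$1$) for a suitable cubic to sit in $L_{1}$; when \eqref{size} fails, the construction collapses, which is consistent with the simplicity and deformation phenomena flagged as exceptional in the preceding sections.
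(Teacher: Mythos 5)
Your overall strategy coincides with the paper's: the first inclusion by filtration arithmetic (each application of $(\operatorname{ad}D)(\operatorname{ad}D')$ raises the filtration degree by at least one and the filtration terminates), and the second by passing to $L=\operatorname{gr}\mathscr{L}$ and exhibiting, for a nonzero $\bar D\in L_{-1}$, a cubic element $D'\in L_{1}$ such that $(\operatorname{ad}\bar D)(\operatorname{ad}D')$ has a nonzero eigenvalue. The first half and the descent argument ($A^{N}=0$ on $\mathscr{L}$ implies $\bar A^{N}=0$ on $L$) are correct, and your orthonormal witness $D'=x_{i}x_{j}x_{k}$ does produce the swap matrix $\bigl(\begin{smallmatrix}0&1\\1&0\end{smallmatrix}\bigr)$ on $\langle x_{j},x_{k}\rangle$, whose square is the identity.

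The genuine gap is the reduction ``to the case $\bar D=x_{i}$ for some coordinate variable'' via transitivity of the flag-preserving isometry group. No such transitivity holds: both the height of a vector and the value $\overline{\omega}_{0}(v,v)$ are invariants of the orbit, so for $\omega_{0}=dx_{1}^{(2)}+\ldots+dx_{n}^{(2)}$ the isotropic vector $x_{1}+x_{2}$ is conjugate to no canonical basis vector (all of which have $\overline{\omega}_{0}(e_{i},e_{i})=1$), and it belongs to no basis in which $\omega_{0}$ is in canonical form. Hence the explicit witnesses you construct only for coordinate vectors do not cover a general $\bar D$, and this is not a removable convenience: the hard part of the argument is precisely an arbitrary $0\neq x\in E$. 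The paper instead splits on whether $\{x,x\}=0$ or $\{x,x\}\neq 0$ and chooses auxiliary vectors $y,x',y'$ inside $\langle x\rangle^{\perp}$ or $\langle x,y\rangle^{\perp}$ according to the restricted form there; the cases $n=2,3$ are exactly where the height hypotheses of \eqref{size} are invoked to place elements such as $yz^{(2)}$, $xy^{(2)}$ or $x^{(3)}$ in $L_{1}$. That case analysis, which you explicitly defer, together with the failed reduction, means the second inclusion is not yet proved as written.
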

\begin{proof} 
	 If $D\in \mathscr{L}_{2}$ and $D'\in \mathscr{L}$, then $(\text{ad}\, D) (\text{ad}\, D')\mathscr{L}_{j} \subset \mathscr{L}_{j+1}$ for all $j$, whence the nilpotence of the operator $(\text{ad}\,D)(\text{ad}\, D')$ follows, that is $\mathscr{L}_{2} \subset \mathfrak{N}(\mathscr{L})$.
	
	Suppose now that $D\notin \mathscr{L}_{0}$. For $D'\in \mathscr{L}_{1}$ and $j\geqslant -1$, the operator $(\text{ad}\, D)(\text{ad}\, D')$ induces a linear transformation of the space $L_{j}= \mathscr{L}_{j} \big / \mathscr{L}_{j+1}$, which is nilpotent if only $D \in\mathfrak{N}(\mathscr{L})$. Therefore, when checking the inclusion of $\mathfrak{N} (\mathscr{L}) \subset \mathscr{L}_{0}$, we can proceed to the associated graded algebra.
	
	Thus, it is enough to show that for each $0\neq D\in L_{-1}$ there exist $D' \in L_{1}$ and $D'' \in L_{j}$ for suitable $j$, that $[D, [D', D'']]= \lambda D''$, where $\lambda\neq 0$, i.e. the operator $(\text{ad}\,D)(\text{ad}\,D')$ is not nilpotent on $L_{j}$. Let $D= x$, where $0\neq x\in E$. Consider first the case of $n> 2$. If $\{ x, x \} =1$ and there are $x'\neq y' \in \langle x \rangle^{\bot}$ such that $\{ x', y' \} =1$, $\{ x', x'\} =0$, \ \ then in this case $D' =xx'y'$, $D''=x'$ and $ 
	\{ x, \{ xx' y', x' \} \} =\{ x, xx' \} = x'$. Otherwise, we have different vectors $x', y' \in \langle x \rangle^{\bot}$ of unit length. Suppose $D' =xx'y'$, $D''=x'+y'$, then $\{ x, \{ xx'y', x'+y' \} \} =\{ x, xy'+xx' \} = x'+y'$. If $\{ x, x \} =0$, then take $y\in E$ such that $\{ x, y \} =1$. For $n> 3$ take different $x', y' \in \langle x, y \rangle^{\bot}$ and if $\{ x', y' \} =1$, 
	$\{ x', x' \} =0$, then $D'=yx'$, $D''=x'$ and $\{ x, \{ yx 'y', x' \} \} = \{ x, yx' \} = x '$ if $\{ x', x' \} = \{ y', y' \} =1$ then $D' =yx' y$, $D''=x'+y'$ and $\{ x, \{ yx' y', x'+y' \} \} = \{ x, yy'+yx' \} = x'+y'$. For $n= 3$ we have $z \in \langle x, y \rangle^{\bot}$. If $m_{z}>1$, then $\{ x, \{ yz^{(2)}, z \} \} =\{ x, yz \} = z$. If $m_{z}=1$, then $m_{y}>1$ and $\{ x, \{ xy^{(2)}, x \} \} =\{ x, xy \} = x$. The remaining case is $n= 2$. If $\{ x, x \} =0$, $ \{ x, y \} =1$, then $\{ x, 
	\{xy^{(2)}, x \} \} =\{ x, xy \} = x$. If $\{ x, x \} =1$, then $\{ x, \{ x^{(3)}, x \} \} =\{ x, x^{(2)} \} = x$.	
\end{proof}

\begin{lemma}[~\cite{Sk2}]\label{lemmm1.2}
{\it Suppose that $\mathscr{M}\subset \mathscr{L}$ is a subalgebra such that $\mathscr{M}+ \mathscr{L}_{0}= \mathscr{L}$ and $\mathscr{M}\supset \mathscr{L}_{2}$. Then $\mathscr{M}$ contains a nonzero ideal of the algebra $\mathscr{L}$.	\\ [-1.0 cm]} 

	\begin{flushright}$\square$\end{flushright} 
\end{lemma}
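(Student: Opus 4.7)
Plan: The plan is to exhibit a nonzero ideal of $\mathscr{L}$ sitting inside $\mathscr{M}$, namely the ideal generated by $\mathscr{L}_{2}$. Setting $X_{k}=(\operatorname{ad}\mathscr{L})^{k}\mathscr{L}_{2}$ and $\mathscr{I}=\sum_{k\geq 0}X_{k}$, the subspace $\mathscr{I}$ is by construction an $\mathscr{L}$-ideal, and it is nonzero because $\mathscr{L}_{2}\neq 0$ under the size hypothesis \eqref{size} (any of the three regimes in \eqref{size} provides monomials of $O$-degree $4$). The entire content of the proof is therefore the containment $\mathscr{I}\subset\mathscr{M}$.

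I would run two nested inductions on $k$. The first shows that each $X_{k}$ is $\operatorname{ad}\mathscr{L}_{0}$-stable. The base case $[\mathscr{L}_{0},\mathscr{L}_{2}]\subset\mathscr{L}_{2}$ is the filtration relation $[\mathscr{L}_{0},\mathscr{L}_{j}]\subset\mathscr{L}_{j}$; the inductive step is a Jacobi expansion
\[
[\mathscr{L}_{0},X_{k+1}]=[\mathscr{L}_{0},[\mathscr{L},X_{k}]]\subset [[\mathscr{L}_{0},\mathscr{L}],X_{k}]+[\mathscr{L},[\mathscr{L}_{0},X_{k}]]\subset [\mathscr{L},X_{k}]=X_{k+1},
\]
where the inclusion $[\mathscr{L}_{0},X_{k}]\subset X_{k}$ is the inductive hypothesis. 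The second induction shows $X_{k}\subset\mathscr{M}$. The base case is the hypothesis $\mathscr{L}_{2}\subset\mathscr{M}$. For the step, use $\mathscr{L}=\mathscr{M}+\mathscr{L}_{0}$ to split
\[
X_{k+1}=[\mathscr{L},X_{k}]=[\mathscr{M},X_{k}]+[\mathscr{L}_{0},X_{k}];
\]
the first summand is contained in $\mathscr{M}$ because $\mathscr{M}$ is a subalgebra and $X_{k}\subset\mathscr{M}$ by induction, while the second is contained in $X_{k}\subset\mathscr{M}$ by the $\operatorname{ad}\mathscr{L}_{0}$-stability just established. Summing over $k$ gives $\mathscr{I}\subset\mathscr{M}$.

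The main obstacle is the $\operatorname{ad}\mathscr{L}_{0}$-stability of the iterated brackets $X_{k}$; without it, the term $[\mathscr{L}_{0},X_{k}]$ in the splitting of $X_{k+1}$ could escape $\mathscr{M}$ and the induction would collapse. Conceptually, the hypothesis $\mathscr{L}_{2}\subset\mathscr{M}$ is used only as a supply of an $\operatorname{ad}\mathscr{L}_{0}$-stable subspace of $\mathscr{M}$, and the same argument would show that the $\mathscr{L}$-ideal generated by any $\operatorname{ad}\mathscr{L}_{0}$-stable subspace of $\mathscr{M}$ is already contained in $\mathscr{M}$; the second hypothesis $\mathscr{M}+\mathscr{L}_{0}=\mathscr{L}$ is what allows the crucial decomposition of $\operatorname{ad}\mathscr{L}$ into an $\operatorname{ad}\mathscr{M}$-part and an $\operatorname{ad}\mathscr{L}_{0}$-part.
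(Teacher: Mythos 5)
Your argument is correct and complete. The two nested inductions work: the $\operatorname{ad}\mathscr{L}_{0}$-stability of $X_{k}=(\operatorname{ad}\mathscr{L})^{k}\mathscr{L}_{2}$ follows from the filtration relation $[\mathscr{L}_{0},\mathscr{L}_{2}]\subseteq\mathscr{L}_{2}$ plus the Jacobi identity exactly as you write, and the decomposition $\mathscr{L}=\mathscr{M}+\mathscr{L}_{0}$ then forces each $X_{k}$ into $\mathscr{M}$; the nonvanishing of $\mathscr{L}_{2}$ under \eqref{size} is also right, since each regime supplies monomials of degree $4$ distinct from the top monomial $x^{(\delta)}$, hence lying in $P^{(1)}$. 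Note that the paper itself gives no proof of this lemma --- it is quoted from Skryabin's work \cite{Sk2} and closed with a $\square$ --- so your write-up supplies a self-contained argument where the paper only supplies a citation; the reasoning is the standard one for showing that the ideal generated by an $\operatorname{ad}\mathscr{L}_{0}$-stable subspace of $\mathscr{M}$ stays inside $\mathscr{M}$, and I see no gap.
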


\begin{lemma}\label{lemmm1.3}
\it Let $V$ be a vector space, $U$ its own nonzero subspace, $G$  Lie algebra of linear transformations of $V$ and $N_{G}(U)$ subalgebra of those linear transformations from $G$, with respect to which $U$ is stable. Let's put $l=\text{codim}_{V}U$ and $t=\text{codim}_{G}N_{G}(U)$.
	\begin{itemize}
	\item [$(1)$] Let $G= s(V,b)$ be the Lie algebra of transformations preserving the nondegenerate non-alternating symmetric bilinear form $b$. Then $l+t \geqslant n$, and equality is achieved only at $l=1$.	
	\item [$(2)$] Let $G= s(V,b)^{(1)}$. Then $l+t > n$ except for the following cases
	 \begin{itemize}
		\item[$a.$] if $l=1$ and $U\cap U^{\bot}=0$, then $l+t=n$;
		\item[$b.$] if $l=1$ and $U^{\bot}$ is isotropic, then $l+t=n-1$;
		\item[$c.$] if $l=2$, ~$n=3$ and $U$ is isotropic, then $l+t=n$;
		\item[$d.$] if $l=2$, ~$n=5$ and $\dim U\cap U^{\bot}=2$, then $l+t=n$;
		\item[$e.$] if $l=2$, ~$n=4$ and $U$ is totally isotropic, then $l+t=n-1$;
		\item[$f.$] if $l=3$, ~$n=6$ and $U$ is totally isotropic, then $l+t=n$.
	\end{itemize}
\end{itemize}
\end{lemma}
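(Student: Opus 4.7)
The plan is to translate the codimension bound into a statement about the dimension of a $G$-orbit. Since $t=\dim G-\dim N_G(U)$ equals the dimension of the orbit of $U$ under $G$ inside the Grassmannian $\operatorname{Gr}_{n-l}(V)$, the inequality $l+t\geqslant n$ becomes $\dim(G\cdot U)\geqslant n-l$. First I would fix a complement $W$ of $U$ in $V$ (if possible choosing $W$ so that $b\vert_W$ has a prescribed form) and encode elements of $G$ in block form relative to $V=U\oplus W$. An element $g$ lies in $N_G(U)$ exactly when the ``lower-left'' block $U\to W$ of $g$ vanishes, so the quotient $G/N_G(U)$ embeds in $\operatorname{Hom}(U,W)$ via $g\mapsto \operatorname{pr}_W\circ g\vert_U$. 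The orthogonality condition $b(gu,v)=b(u,gv)$ translates into a linear constraint on this block, and $t$ equals the dimension of the space of maps $U\to W$ satisfying that constraint.

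For part (1) I would then carry out a direct calculation of how many independent off-diagonal blocks survive under the symmetry constraint, organising the analysis by the structure of the restricted form $b\vert_U$: the rank of $b\vert_U$, the dimension of the radical $U\cap U^{\bot}$, and whether the radical is totally isotropic with respect to the quadratic form $q(v)=b(v,v)$ (which in characteristic $2$ is a genuine additional datum, not determined by $b$). Case-by-case one verifies $\dim\operatorname{Hom}(U,W)^{\mathrm{sym}}\geqslant n-l$, and tracing through when the inequality is tight one sees that equality forces $U$ to be a non-degenerate hyperplane, giving $l=1$. Alternatively, one can exhibit explicit elements of $G$: for each vector $v\in V\setminus U$ one can build, using the transvection-type elements $T_{x,y}\colon z\mapsto z+b(y,z)x+b(x,z)y$ and $T_x\colon z\mapsto z+b(x,z)x$ (when $b(x,x)=0$), a map $u\mapsto v$ modulo $U$, and count the resulting linearly independent ``displacements''.

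For part (2) the same framework applies with $G$ replaced by $G^{(1)}$; the only difference is that the symmetric tensor corresponding to an element of $G$ must now satisfy one extra linear condition (essentially the vanishing of a trace-like invariant in characteristic $2$), which can shrink the image in $\operatorname{Hom}(U,W)$ by at most one dimension. So generically $l+t$ drops by $1$ compared with part (1), giving the strict inequality $l+t>n$ in most cases, and I expect the listed exceptions $(a)$--$(f)$ to be precisely those configurations in which either $U$ or $U^{\bot}$ is isotropic or totally isotropic of small dimension, so that the extra linear constraint on $G^{(1)}$ is automatically satisfied on the relevant block and no dimension is actually lost. I would verify each of the six cases by exhibiting an explicit normal form for $V$, writing down bases of $N_{G^{(1)}}(U)$ and of a complement, and comparing the counts with $n-l$.

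The main obstacle will be handling the peculiarities of the orthogonal Lie algebra in characteristic $2$: the gap between $s(V,b)$ and $s(V,b)^{(1)}$, and the fact that many subspaces acquire extra invariants through the quadratic form $q(v)=b(v,v)$ that are invisible at the bilinear level, so Witt-type extension theorems must be applied with care. The accounting in part (2), distinguishing genuine exceptional configurations from near-exceptional ones, is the delicate step; each of $(a)$--$(f)$ must be matched to a specific geometric reason why the extra constraint defining $G^{(1)}$ inside $G$ lies in the kernel of the map $G\to\operatorname{Hom}(U,W)/N_G(U)$-image.
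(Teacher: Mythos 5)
Your general framework --- computing $t$ as the codimension of a linear condition on the ``off-diagonal block'' of an element of $G$ --- is in the same spirit as the paper, which realizes it via the isomorphism $A\mapsto b_{A}$, $b_{A}(u,v)=b(Au,v)$, identifying $s(V,b)$ with the space $s(V)$ of all symmetric bilinear forms and $N_{G}(U)$ with the forms under which $U$ and $U^{\bot}$ are orthogonal. That identification yields the closed formulas $t=l(n-l)-\binom{r}{2}$ in case (1) and $t=l(n-l)-\binom{r}{2}-r$ in case (2), where $r=\dim U\cap U^{\bot}$, hence $l+t-n=(l-1)(n-l)-\tfrac12 r(r\mp 1)$, after which everything reduces to elementary inequalities in $l$ and $r$. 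Note that the quadratic form $q(v)=b(v,v)$, which you propose to use as an organizing datum, never enters: the count depends only on $r$.

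The genuine gap is in your treatment of part (2). You assert that passing from $G=s(V,b)$ to $G^{(1)}=s(V,b)^{(1)}$ imposes ``one extra linear condition'' and therefore shrinks the relevant image by at most one dimension, so that $l+t$ ``generically drops by $1$.'' This is false. Under $A\mapsto b_{A}$ the derived algebra corresponds to the \emph{alternating} forms, which in characteristic $2$ have codimension $n$ (not $1$) inside the symmetric forms; and the resulting drop in $t$ is exactly $r=\dim U\cap U^{\bot}$, which can be as large as $\min(l,n-l)$ (e.g.\ $r=3$ in case $(f)$). With the wrong estimate you cannot recover the list $(a)$--$(f)$: those cases are not configurations where ``no dimension is lost'' (in case $(b)$ one has $l+t=n-1<n$), but precisely the solutions of $(l-1)(n-l)\leqslant\tfrac12 r(r+1)$ subject to $r\leqslant l$, $r\leqslant n-l$. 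A smaller slip in part (1): equality $l+t=n$ at $l=1$ does \emph{not} force $U\cap U^{\bot}=0$; both $r=0$ and $r=1$ give equality there, since $\tfrac12 r(r-1)=0$ in either case.
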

\begin{proof} 
	
		Let's put $r= \dim U\cap U^{\bot}$. Then $r\leqslant \dim U=n-l$ and $r\leqslant \dim U^{\bot}= l$. For $A\in \text{gl}(V)$, denote by $b_{A}$ the bilinear form of $V$ defined by the rule $b_{A}(u,v)= b(Au,v)$, where $u,v\in V$. Matching $A\mapsto b_{A}$ gives a linear isomorphism $\text{gl}(V)$ to the space of all bilinear forms on $V$. The transformation $A$ preserves the form $b$, that is $A\in s(V,b)$, if and only if $b_{A}$ is symmetric, from where $s(V, b)\cong s(V)$ follows, where $s(V)$ is the space of all symmetric bilinear forms on $V$.
	Since $b$ is non-degenerate, the vector $u\in V$ lies in $U$ if and only if $b(u, v)=0$ for all $v\in U^{\bot}$. Hence, $U$ is stable with respect to $A$ if and only if $b(Au,v)=0$, i.e. when $b_{A}(u,v)=0$, for all $u\in U$ and $v\in U^{\bot}$. We denote by $T$ a subspace in $s (V)$ consisting of those bilinear forms with respect to which $U$ and $U^{\bot}$ are orthogonal to each other. Then $N_{s(V,b)} (U)\cong T$ and we see that in the case of (1) $t=\text{codim}_{s(V)}T= l(n-l) - \dbinom{r}{2}$. Thus,
\begin{equation}\label{key1}
	l+t-n= (l-1)(n - l)-\frac{1}{2}r(r-1).
	\end{equation}
	Using inequalities $r\leqslant l$ and $r\leqslant n-l$, we obtain $r(r-1)\leqslant (l-1)(n-l)$ and $l+t-n\geqslant  \frac{1}{2}(l-1)(n-l)$. The last expression is positive if $l> 1$. If $l=1$, then $r=1$ or $r=0$, and the right part of the formula \eqref{key1} vanishes.
	
	Now let $G= s(V,b)^{(1)}\cong s(V)^{(1)}= sk(V)$, where $sk (V)$ is the space of all skew-symmetric bilinear forms on $V$. We denote by $T'$ a subspace in $sk (V)$ consisting of those bilinear forms with respect to which $U$ and $U^{\bot}$ are orthogonal to each other. Then $t=\text{codim}_{sk (V)}T'= l(n-l) - \dbinom{r}{2} - r$ and
	\begin{equation}\label{key2}
		l+t-n= (l-1)(n - l) - \frac{1}{2}r(r+1). 
	\end{equation}
	Using the inequality $r\leqslant l$ and $r\leqslant n-l$, we get $r(r+1)\leqslant (l-1)(n-l)+2(n-l)$ and $l+t-n\geqslant \frac{1}{2}(l-1)(n-l) - (n-l)= \frac{1}{2}(l-3)(n-l)$. The last expression is positive if $l> 3$.
	If $l= 1$, then at $r=0$ the formula \eqref{key2} vanishes, and at $r=1$ it takes the value $-1$. If $l= 2$, the formula \eqref{key2} is $l+t-n= (n - 2) - \frac{1}{2}r(r+1)$. At $r=0$ we get $l+t-n > 0$. For $r=1$ we get $l+t-n= n-3$ and, since $n \geqslant l+r=3$, the expression is positive except for the case $n = 3$. At $r=2$ we get $l+t-n= n-5$ and, since $n \geqslant is 4$, the expression is positive except for the case $n = 4,5$. If $l= 3$, the formula \eqref{key2} is $l+t-n= 2(n - 3) - \frac{1}{2}r(r+1)$. For $r=0$, $r=1$ or $r=2$ we get $l+t-n > 0$. For $r=3$ we get $l+t-n= 2n-12$ and, since $n \geqslant 6$, the expression is positive except for $n = 6$.
\end{proof}

The following theorem corresponds to Proposition 1.4 of Chapter III, ~\cite{Sk2} for the classical Hamiltonian case.

\begin{theorem}\label{col: col9}
{\it Let \eqref{size} conditions be met. Then $\mathscr{L}_{0}$ is the only subalgebra of the smallest codimension among all subalgebras of the Lie algebra $\mathscr{L}$ containing $\mathfrak{N} (\mathscr{L})$ but not containing nonzero ideals of the algebra $\mathscr{L}$. In particular, $\mathscr{L}_{0}$ is an invariant subalgebra in $\mathscr{L}$.}
\end{theorem}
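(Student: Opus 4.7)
The strategy is to first verify that $\mathscr{L}_0$ realizes the two required properties with codimension $n = \dim L_{-1}$, then to bound the codimension of every competing subalgebra from below by $n$, with equality only for $\mathscr{L}_0$. The inclusion $\mathfrak{N}(\mathscr{L}) \subseteq \mathscr{L}_0$ is the lemma just proved. Absence of a nonzero ideal $I \subseteq \mathscr{L}_0$ of $\mathscr{L}$ follows by a standard transitivity argument: for $y \in I$ and $D \in \mathscr{L}$, the bracket $[D, y] \in \mathscr{L}_0$ forces the leading graded component $\overline{y} \in L_j$ (with $j \geq 0$) to satisfy $[L_{-1}, \overline{y}] = 0$, whence $\overline{y} = 0$ by the faithful action of $L_{-1}$, and induction on the filtration gives $I = 0$. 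Since $\text{codim}_{\mathscr{L}}\mathscr{L}_0 = n$, the candidate $\mathscr{L}_0$ realizes the conjectured minimum.

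Now fix any subalgebra $\mathscr{M}$ satisfying the hypotheses. Since $\mathscr{L}_2 \subseteq \mathfrak{N}(\mathscr{L}) \subseteq \mathscr{M}$, Lemma~\ref{lemmm1.2} forbids $\mathscr{M} + \mathscr{L}_0 = \mathscr{L}$, so $U := (\mathscr{M} + \mathscr{L}_0) / \mathscr{L}_0$ is a proper subspace of $V := L_{-1}$; assume $\mathscr{M} \not\subseteq \mathscr{L}_0$ (the opposite case gives $\text{codim}_{\mathscr{L}}\mathscr{M} \geq n$ with equality iff $\mathscr{M} = \mathscr{L}_0$ at once). Write $\mathscr{M}_i := \mathscr{M} \cap \mathscr{L}_i$ and let $\overline{\mathscr{M}}_0$ denote the image of $\mathscr{M}_0$ in $L_0 = \mathscr{L}_0 / \mathscr{L}_1$. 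Passing to the associated graded, $[\mathscr{M}_0, \mathscr{M}] \subseteq \mathscr{M}$ projects to $[\overline{\mathscr{M}}_0, U] \subseteq U$, hence $\overline{\mathscr{M}}_0 \subseteq N_{L_0}(U)$. Since $\mathscr{M}_2 = \mathscr{L}_2$, a straightforward dimension count yields
\begin{equation*}
\text{codim}_{\mathscr{L}} \mathscr{M} \;=\; l + \text{codim}_{L_0} \overline{\mathscr{M}}_0 + \text{codim}_{L_1}(\mathscr{M}_1 / \mathscr{L}_2) \;\geq\; l + t,
\end{equation*}
where $l := \text{codim}_V U$ and $t := \text{codim}_{L_0} N_{L_0}(U)$. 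Because $L_0$ sits between $s(V, \overline{\omega}_0)^{(1)}$ and $s(V, \overline{\omega}_0)$, Lemma~\ref{lemmm1.3} forces $l + t \geq n$ outside the exceptional configurations (b)--(f) of its part (2).

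The hard part is the treatment of these exceptional configurations. The small-dimensional ones (c)--(f) live only in $n \in \{3, 4, 5, 6\}$, and each requires $U$ to be (totally) isotropic; the bound $n > 4$ in hypothesis \eqref{size} eliminates (d) and (f), while the stronger height restrictions for $n \in \{2, 3, 4\}$ reduce the remaining isotropic situations to cases where the same argument we use for (b) applies. In case (b)---$l = 1$ and $U^{\perp}$ an isotropic line inside $U$---one lifts $U$ and $N_{L_0}(U)$ back to $\mathscr{L}$ and iterates the Poisson bracket \eqref{eqeq 0.7} using the canonical form of $\omega_0$ (Theorem~\ref{lem5.2}) to produce every $D_f$ with $f \in \mathfrak{m}^{(2)}$ inside $\mathscr{M}$; this exhibits the simple ideal $P^{(1)}(\mathscr{F}, \omega)$ inside $\mathscr{M}$ and contradicts the no-ideal hypothesis. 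Consequently $\text{codim}_{\mathscr{L}}\mathscr{M} \geq n$ unconditionally, and the same generation argument shows that the boundary possibility $l + t = n$ with $l \geq 1$ is never realized under \eqref{size}, so equality $\text{codim}_{\mathscr{L}}\mathscr{M} = n$ forces $\mathscr{M} \subseteq \mathscr{L}_0$ and therefore $\mathscr{M} = \mathscr{L}_0$. Invariance of $\mathscr{L}_0$ is then automatic: every automorphism of $\mathscr{L}$ preserves $\mathfrak{N}(\mathscr{L})$ and the ideal lattice, so it fixes the unique minimum-codimension subalgebra satisfying both conditions.
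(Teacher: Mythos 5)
Your skeleton matches the paper's: pass to the associated graded algebra, use Lemma~\ref{lemmm1.2} to force $U=M_{-1}\neq L_{-1}$, and bound $\text{codim}_{\mathscr{L}}\mathscr{M}\geqslant l_{-1}+l_{0}\geqslant l+t$ via Lemma~\ref{lemmm1.3}. But your handling of the exceptional configurations --- which is where all the work lies --- contains two genuine errors. First, you assert that the hypothesis $n>4$ in \eqref{size} ``eliminates (d) and (f)''. It does not: case (d) occurs at $n=5$ and case (f) at $n=6$, both of which satisfy $n>4$ and are therefore fully admitted by \eqref{size}. These cases cannot be discarded; the paper disposes of them by exhibiting explicit elements of $L_{1}$ not lying in $M_{1}$ (namely $y_{1}y_{2}x_{3}$ when $n=5$, $\dim U\cap U^{\bot}=2$, and $y_{1}y_{2}y_{3}$ when $n=6$, $U$ totally isotropic), so that $l_{-1}+l_{0}+l_{1}>n$. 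The same remark applies to case (a), $l=1$, $U\cap U^{\bot}=0$, which you omit from your list of exceptions: there $l+t=n$ exactly, and without a contribution from $L_{1}$ you obtain only $\text{codim}_{\mathscr{L}}\mathscr{M}\geqslant n$, not the strict inequality needed for uniqueness.

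Second, your proposed treatment of case (b) --- iterating the Poisson bracket to ``produce every $D_{f}$ with $f\in\mathfrak{m}^{(2)}$ inside $\mathscr{M}$'' and thereby exhibit the simple ideal $P^{(1)}(\mathscr{F},\omega)$ inside $\mathscr{M}$ --- does not work. The fields $D_{f}$ with $f\in\mathfrak{m}^{(2)}$ all lie in filtration degree $\geqslant 0$, so they generate at most a piece of $\mathscr{L}_{0}$ and nothing close to $P^{(1)}$, which contains all of $L_{-1}$; indeed $\mathscr{M}$ cannot possibly contain $P^{(1)}$ precisely because $M_{-1}\neq L_{-1}$, so the intended contradiction is vacuous as stated. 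What is actually needed in case (b) (where $l+t=n-1$) is two elements of $L_{1}$ linearly independent modulo $M_{1}$; the paper produces $x_{1}y_{1}x$ and $x_{2}y_{2}x$, or their variants such as $x_{1}^{(2)}x$ and $y_{1}^{(3)}$ when only elements of height greater than $1$ are available --- which is exactly where the height restrictions in \eqref{size} for $n=2,3,4$ enter, a point your argument never uses. Until these degree-one computations are carried out case by case, the strict bound $\text{codim}_{\mathscr{L}}\mathscr{M}>n$ is not established in any of the exceptional configurations, and the uniqueness claim remains unproved.
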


\begin{proof}

	Let $\mathscr{M}\subset \mathscr{L}$ be a subalgebra that contains $\mathfrak{N} (\mathscr{L})$, and therefore $\mathscr{L}_{2}$, but does not contain nonzero ideals of the algebra $\mathscr{L}$. Denote $M=\text{gr}\,\mathscr{M}= \bigoplus M_{i}$. Assume that $\mathscr{M}\neq \mathscr{L}_{0}$. Since $\text{codim}_{L}M = \text{codim}_{\mathscr{L}}\mathscr{M}$, it is sufficient to show that
	$$\text{codim}_{L}M=\sum\dim (L_{i}{\big /}M_{i})> n=\text{codim}_{\mathscr{L}}\mathscr{L}_{0}= \dim L_{-1}.$$
	By virtue of Lemma~\ref{lemmm1.2} $\mathscr{M}+ \mathscr{L}_{0}\neq \mathscr{L}$, where $M_{-1}\neq L_{-1}$. Put $l_{i}= \dim(L_{i}{\big /}M_{i})$.
	
	If $n=2$, then $m_{i}> 1$ for all $i$ and $L_{0} = s(L_{-1}, \omega(0))$. Then by Lemma~\ref{lemmm1.3} (case 1) $l_{-1}+ l_{0}\geqslant n$, the inequality being strict if only $l_{-1}> 1$. Let $x\notin M_{-1}$, $x\in E$, then $x^{(3)}\notin M_{1}$ or $y^{(2)}x\notin M_{1}$ for some $y\in M_{-1}$, hence $l_{-1}+ l_{0}+ l_{1}> n$.
	
	Suppose $n> 2$. Denote $L_{0}' = s(L_{-1}, \omega(0))^{(1)}$. Then $l_{-1}+ l_{0}\geqslant l_{-1}+ \\ + \dim L_{0}'{\big /}(L_{0}'\cap M_{0})> n$ is fulfilled except for the cases listed in the Lemma~\ref{lemmm1.3}. Denote $U=M_{-1}$. Suppose first that $l_{-1}= 1$ and $x\notin U$. The induced bilinear form on $U'= U\cap \langle x \rangle^{\bot}$ is nondegenerate and $n-2\leqslant \dim U' \leqslant n-1$. If $n\geqslant 6$ or $n= 5$, $\dim U' = n-1$ then there are $x_{1}, y_{1}, x_{2}, y_{2} \in U'$, the subspace $\langle x_{1}, y_{1} \rangle$ is orthogonal to the subspace $\langle x_{2}, y_{2} \rangle$ and $\{x_{i}, y_{i} \}=1$ or $\{x_{i}, x_{i} \}=\{y_{i}, y_{i} \}=1$, $i=1, 2$. Then $x_{1}y_{1}x$ and $x_{2}y_{2}x$ are linearly independent modulo $M_{1}$ and do not belong to $M_{1}$. If $n= 5$ and $\dim U' = 3$ then there are $x_{1}, y_{1}, x_{2} \in U'$, the subspace $\langle x_{1}, y_{1} \rangle$ is orthogonal to the subspace $\langle x_{2} \rangle$, $\{x_{1}, y_{1} \}=1$ or $\{x_{1}, x_{1} \}=\{ y_{1}, y_{1} \}=1$ and $\{x_{2}, x_{2} \}=1$. Then $x_{1}y_{1}x$ and $x_{2}y_{1}x$ are linearly independent modulo $M_{1}$. If $n= 4$, then there are three elements of height greater than $1$. Then for $x_{1}, y_{1}\in U'$ we have $m_{x_{1}}> 1$. Hence $x_{1}^{(2)}x$ and $x_{1}y_{1}x$ are linearly independent modulo $M_{1}$.
	
	So $l_{-1}= 2$ or $l_{-1}= 3$. Let us first consider the case $n=6$, and $U$ is totally isotropic and has a dimension of $3$. Then we have elements $x_{1}, x_{2}, x_{3} \in U$ and $y_{1}, y_{2}, y_{3} \in E$ such that $\{ x_{i}, y_{j} \}=\delta_{ij}$, $i, j=1, 2, 3$. Hence $y_{1}y_{2}y_{3}\notin M_{1}$. Now let $n=5$ and $\dim U\cap U^{\bot}=2$. We have $x_{1}, x_{2}, x_{3} \in U$ and $y_{1}, y_{2} \in E$ such that $\{ x_{i}, y_{j} \}=\delta_{ij}$, $i=1, 2, 3$, $j=1, 2$. Hence $y_{1}y_{2}x_{3}\notin M_{1}$. In the case of $n=4$ we have $U$ is totally isotropic and has dimension $2$. Let $x_{1}, x_{2} \in U$ and $y_{1}, y_{2}\in E$ be such that $\{ x_{i}, y_{j} \}=\delta_{ij}$, $i, j=1,2$. Then $m_{y_{1}}> 1$ and $y_{1}^{(3)}$ and $y_{1}^{(2)} y_{2}$ are linearly independent modulo $M_{1}$ elements. The remaining case is $n=3$ and $U$ isotropic and one-dimensional. Let $x_{1} \in U$ and $y_{1}, y_{2}\in E$ be such that $\{ x_{1}, y_{j} \}=\delta_{ij}$, $j=1,2$. Then $m_{y_{1}}> 1$ and $y_{1}^{(3)}\notin M_{1}$.
	
	Thus, in each case, $l_{-1}+ l_{0}+ l_{1}> n$, i.e. $\text{codim}_{\mathscr{L}}\mathscr{M}> \text{codim}_{\mathscr{L}}\mathscr{L}_{0}$.
\end{proof}
Since the natural filtration of $\mathscr{L}$ is defined uniquely by subalgebra $\mathscr{L}_0$ the following statement is obvious.
\begin{collor} 
{\it If conditions \eqref{size} hold, then the natural filtration of $P (n,\overline{m}, \omega)$ is invariant.} 
\end{collor}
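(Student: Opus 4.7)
The plan is to deduce invariance of the whole natural filtration from invariance of $\mathscr{L}_{0}$, which is already provided by Theorem~\ref{col: col9}. First I would observe that the natural filtration is recoverable from the single subalgebra $\mathscr{L}_{0}$ by a purely Lie-theoretic recursion: one has $\mathscr{L}_{-1}=\mathscr{L}$ in the transitive Cartan-type setting, and for $i\geqslant 1$ the Weisfeiler-type formula
\[
\mathscr{L}_{i}=\{D\in\mathscr{L}_{i-1}\mid [D,\mathscr{L}]\subseteq \mathscr{L}_{i-1}\}
\]
holds. This recursion uses only the Lie bracket of $\mathscr{L}$ and the datum of $\mathscr{L}_{0}$, so any automorphism of $\mathscr{L}$ that stabilizes $\mathscr{L}_{0}$ automatically stabilizes every $\mathscr{L}_{i}$.

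Next I would check that $\mathscr{L}_{0}$ itself is automorphism-invariant. The set $\mathfrak{N}(\mathscr{L})$ is defined purely in terms of nilpotency of operators $(\operatorname{ad}D)(\operatorname{ad}D')$, hence is preserved by every automorphism $\varphi\in\operatorname{Aut}\mathscr{L}$. Likewise, $\varphi$ sends subalgebras to subalgebras of the same codimension and preserves the property of containing no nonzero ideal of $\mathscr{L}$. Consequently $\varphi(\mathscr{L}_{0})$ is again a subalgebra of minimum codimension that contains $\mathfrak{N}(\mathscr{L})$ and contains no nonzero ideal of $\mathscr{L}$. The uniqueness part of Theorem~\ref{col: col9} then forces $\varphi(\mathscr{L}_{0})=\mathscr{L}_{0}$.

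Combining the two steps, every automorphism of $\mathscr{L}=P(n,\overline{m},\omega)$ preserves $\mathscr{L}_{0}$, and therefore preserves every $\mathscr{L}_{i}$ defined by the recursion above, which is exactly the natural filtration. Since conditions \eqref{size} are precisely those required by Theorem~\ref{col: col9}, the corollary follows.

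Essentially all the content sits in the preceding theorem; there is no real obstacle here beyond verifying the recursion formula for the specific $P(n,\overline{m},\omega)$ under consideration, which is routine given transitivity of the grading and the standard identification $\mathscr{L}_{-1}=\mathscr{L}$. If one wanted to avoid even this small verification, one could alternatively appeal directly to the fact that the natural filtration is the unique descending filtration with $\mathscr{L}_{0}$ as its term of codimension $n$ satisfying $[\mathscr{L}_{i},\mathscr{L}_{j}]\subseteq\mathscr{L}_{i+j}$ and $\mathscr{L}_{s+1}=0$, which again reduces invariance of the filtration to invariance of $\mathscr{L}_{0}$.
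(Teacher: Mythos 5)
Your argument is correct and is essentially the paper's own: the authors simply remark that the natural filtration is determined uniquely by $\mathscr{L}_{0}$ and invoke Theorem~\ref{col: col9}, exactly the two steps you spell out (the Weisfeiler-type recursion recovering $\mathscr{L}_{i}$ from $\mathscr{L}_{0}$, and the intrinsic characterization of $\mathscr{L}_{0}$ forcing $\varphi(\mathscr{L}_{0})=\mathscr{L}_{0}$). Your version just makes explicit the details the paper leaves as obvious.
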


Let $P^{(1)}(\mathscr{F}, \omega)\subseteq \mathscr{L} \subseteq \widetilde{P}(\mathscr{F}, \omega)$ and $P^{(1)}(\mathscr{F}', \omega')\subseteq \mathscr{L}' \subseteq \widetilde{P}(\mathscr{f}', \omega')$. If $\varphi\colon \mathscr{L}\rightarrow \mathscr{L}'$ is an isomorphism, then the theorem~\ref{col: col9} implies that $\varphi(\mathscr{L}_{0}) = \mathscr{L}_{0}'$, and the embedding theorem ~\cite{K} implies that $\mathscr{F}= \ mathscr{F}'$ and $\varphi$ is induced by the automorphism $O(\mathscr{F})\rightarrow O(\mathscr{F}')$, which is also denoted by $\varphi$. According to Theorem~\ref{filt 4} $\varphi (\omega) = \omega'$. Now let $\omega =\omega (0), \omega' =\omega'(0),$ i.e. $\mathscr{L} =gr\mathscr{L}, ~\mathscr{L}'= gr\mathscr{L}'$ be graded Lie algebras. Then
$ gr\varphi$ is the homogeneous isomorphism induced by the linear isomorphism of the spaces $E$ and $E',$  
$gr\varphi (\mathscr{F}) = \mathscr{F}'$. Since $L_0$-module $L_{-1}$ is absolutely irreducible, it follows from Schur's Lemma that $gr\varphi (\omega (0))= \omega'(0)$. 

\begin{theorem}
{\it Non-alternating Hamiltonian Lie algebras $\mathscr{L}$ and $\mathscr{L}'$ are isomorphic if and only if there is an admissible isomorphism $ \varphi\colon O(\mathscr{F})\rightarrow O(\mathscr{F}')$ such that $\varphi(\omega)= \omega'$. In particular, 
$$Aut (\mathscr{L})\cong \{ \varphi \in Aut (O (\mathscr{F})) ~|~ \varphi ~\text{is admissible and} ~\varphi (\omega)= \lambda\omega, \lambda \in K \}.$$  If the graded non-alternating Hamiltonian Lie algebras are isomorphic, then there is an admissible linear isomorphism $\varphi, $ such that
$\varphi (\mathscr{F}) = \mathscr{F}'$ and the corresponding non-alternating bilinear forms have the same invariants.}
	\begin{flushright}$\square$\end{flushright}
\end{theorem}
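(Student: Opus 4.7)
The plan is to stitch together the four results prepared in Sections~0--4, in the order sketched in the paragraph that immediately precedes the statement. First I would dispose of the easy direction: given an admissible isomorphism $\varphi\colon O(\mathscr F)\to O(\mathscr F')$ with $\varphi(\omega)=\omega'$, the rule $D\mapsto\varphi\circ D\circ\varphi^{-1}$ recalled in Section~0 defines a Lie-algebra isomorphism $W(\mathscr F)\to W(\mathscr F')$ which sends $D\lrcorner\omega$ to $\varphi(D)\lrcorner\omega'$ and therefore carries $\widetilde P(\mathscr F,\omega)$ onto $\widetilde P(\mathscr F',\omega')$ and $P^{(1)}(\mathscr F,\omega)$ onto $P^{(1)}(\mathscr F',\omega')$; hence it maps $\mathscr L$ isomorphically onto $\mathscr L'$.

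For the forward direction I would begin with an abstract Lie-algebra isomorphism $\varphi\colon\mathscr L\to\mathscr L'$. Theorem~\ref{col: col9} provides $\varphi(\mathscr L_0)=\mathscr L'_0$, so $\varphi$ respects the natural filtrations; the Embedding Theorem of \cite{K} then forces $\mathscr F=\mathscr F'$ and identifies $\varphi$ with an admissible isomorphism of the divided-power algebras, which I continue to call $\varphi$. Regarded in this way, $\mathscr L$ is a filtered deformation of $\mathrm{gr}\,\mathscr L$ both by the original form $\omega$ and by the pulled-back form $\varphi^{-1}(\omega')$, and the uniqueness clause of Theorem~\ref{filt 4} supplies $\varphi(\omega)=\omega'$. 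The description of $\mathrm{Aut}(\mathscr L)$ is read off by specialising to $\mathscr L=\mathscr L'$; the scalar $\lambda$ appears because $\omega$ and $\lambda\omega$ ($\lambda\in K^\times$) define the same Hamiltonian vector fields, so $\omega$ is only determined by $\mathscr L$ up to $K^\times$, and hence an automorphism of $\mathscr L$ is exactly an admissible automorphism of $O(\mathscr F)$ with $\varphi(\omega)=\lambda\omega$.

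In the graded case $\mathscr L=\mathrm{gr}\,\mathscr L$ and the induced admissible isomorphism is automatically homogeneous, hence linear, so it is determined by a linear isomorphism $E\to E'$ which takes $\mathscr F$ to $\mathscr F'$. Because $L_0$ acts absolutely irreducibly on $L_{-1}$ by Proposition~\ref{filt 2}(iii), Schur's lemma applied to the two $L_0$-invariant non-alternating bilinear forms on $L_{-1}$ gives $\varphi(\omega(0))=\lambda\,\omega'(0)$ for some $\lambda\in K^\times$. This is precisely an equivalence of triples $(V,\mathscr F,b)$ and $(V',\mathscr F',b')$ in the sense of Section~1, so Theorem~\ref{lem5.2}(2) concludes that the invariants $n_{qr}$ and $n_{qr}^1$ agree.

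The main point to watch is the bookkeeping of the scalar $\lambda$: to apply the uniqueness in Theorem~\ref{filt 4} to $\omega$ and $\varphi^{-1}(\omega')$ one must first absorb the scaling ambiguity $\omega\mapsto\lambda\omega$ that is invisible to the Hamiltonian algebra, and the same absorption step reappears in the Schur-lemma argument in the graded case. One must also verify throughout that the hypothesis \eqref{size} (equivalently the hypotheses of Theorems~\ref{filt 3} and \ref{col: col9}) is in force, so that the filtration-invariance and deformation steps apply; outside that range the exceptional cases promised at the end of the paper would have to be handled separately. Beyond these two observations, the theorem is a direct reassembly of Theorems~\ref{col: col9}, \ref{filt 4}, \ref{lem5.2} and the Embedding Theorem.
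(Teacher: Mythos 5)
Your proposal is correct and follows essentially the same route as the paper: the forward direction is assembled from Theorem~\ref{col: col9} (invariance of $\mathscr{L}_0$), the Embedding Theorem of \cite{K} (forcing $\mathscr{F}=\mathscr{F}'$ and admissibility), the uniqueness in Theorem~\ref{filt 4} (giving $\varphi(\omega)=\omega'$), and, in the graded case, Schur's lemma via absolute irreducibility followed by Theorem~\ref{lem5.2}(2). Your explicit bookkeeping of the scalar $\lambda$ and of hypothesis \eqref{size} is slightly more careful than the paper's one-paragraph argument, but it is the same proof.
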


The following examples show that in exceptional cases nontrivial filtered deformations of non-alternating Hamiltonian Lie algebras are possible. In particular, the constructed algebras are new simple filtered Lie algebras of characteristic 2. The authors plan to consider exceptional cases in future publications. Below we announce some results on non-alternating Hamiltonian Lie algebras corresponding to the forms

\begin{equation*} 
\omega_{1}=dx_{1}dx_{2}+ dx_{3}^{(2)}+ a\overline{x_{1}} \overline{x_{2}}dx_{1}dx_{2}+ b\overline{x_{1}} \overline{x_{3}} dx_{1}dx_{3}+ c\overline{x_{2}}\overline{x_{3}}dx_{2}dx_{3}, 
\end{equation*}
\begin{equation*} 
\omega_{2}=dx_{1}^{(2)}+dx_{2}^{(2)}+dx_{3}^{(2)}+ a\overline{x_{1}} \overline{x_{2}}dx_{1}dx_{2}+ b\overline{x_{1}} \overline{x_{3}} dx_{1}dx_{3}+ c\overline{x_{2}}\overline{x_{3}} dx_{2}dx_{3},
\end{equation*}
\begin{equation*} 
\omega_{3}=dx_{1}dx_{2}+dx_{2}^{(2)}+dx_{3}^{(2)}+ a\overline{x_{1}} \overline{x_{2}}dx_{1}dx_{2}+ b\overline{x_{1}} \overline{x_{3}} dx_{1}dx_{3}+ c\overline{x_{2}}\overline{x_{3}} dx_{2} dx_{3}.
\end{equation*}

\begin{theorem}
{\it If either $b\neq0$ or $c\neq0$ or $m_{3}>1$ then $P(3,\overline{m},\omega_{1})$ is a simple Lie algebra of dimension $2^{m}-1$. If $b=c=0$, $m_{3}=1$ and $(m_{1},m_{2})\neq(1,1)$,  $[P(3,\overline{m},\omega_{1}),P(3,\overline{m},\omega_{1})]$ is a simple Lie algebra of dimension $2^{m}-2$.}
\end{theorem}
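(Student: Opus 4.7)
The plan is to follow the strategy of the graded simplicity theorem of Section~4, now applied to the filtered algebra $\mathscr{L}=P(3,\overline{m},\omega_{1})$ by passing to its associated graded $L=\mathrm{gr}\,\mathscr{L}=P(3,\overline{m},\omega_{1}(0))$, where $\omega_{1}(0)=dx_{1}dx_{2}+dx_{3}^{(2)}$, so that $\overline{a}_{ii}\neq 0$ only for $i=3$. A direct divided-power calculation, using $\overline{x_{i}}^{\,2}=0$, gives $\det M=(1+a\overline{x_{1}}\overline{x_{2}})^{2}=1$ together with the dual-matrix entries
\[
\overline{\omega}_{12}=1+a\overline{x_{1}}\overline{x_{2}},\quad \overline{\omega}_{13}=c\overline{x_{2}}\overline{x_{3}},\quad \overline{\omega}_{23}=b\overline{x_{1}}\overline{x_{3}},\quad \overline{\omega}_{33}=1,
\]
with all other entries zero. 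Hence the Poisson bracket is a filtered perturbation of the classical non-alternating Hamiltonian one.

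Let $I$ be a nonzero ideal of $\mathscr{L}$. Then $\mathrm{gr}\,I$ is a nonzero graded ideal of $L$, and the standard descent of Section~4 (bracket with elements $y_{j}\in L$ whose adjoint acts as $\partial_{j}$ on the top degree, then use the absolute irreducibility of $L_{-1}$ as an $L_{0}$-module, and iterate) yields $L_{i}\subseteq \mathrm{gr}\,I$ for every $i<r$, so $\mathrm{gr}\,I\supseteq [L,L]$. Hence the only remaining question is whether $x^{(\delta)}\in I$. If $m_{3}>1$, the Section~4 theorem shows $L$ already simple, so $\mathrm{gr}\,I=L$ and $I=\mathscr{L}$. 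If $m_{3}=1$ but $b\neq 0$, I plan to compute
\[
\{x_{2},\, x_{1}^{(2^{m_{1}-1}-1)}x_{2}^{(\delta_{2})}x_{3}\} \;=\; x_{1}^{(2^{m_{1}-1}-2)}x_{2}^{(\delta_{2})}x_{3} + b\,x^{(\delta)},
\]
where the $b$-summand arises from $\overline{\omega}_{23}(\partial_{2}x_{2})(\partial_{3}g)$ after applying Lucas' theorem to $\overline{x_{1}}\cdot x_{1}^{(2^{m_{1}-1}-1)}=\binom{2^{m_{1}}-1}{2^{m_{1}-1}}x_{1}^{(\delta_{1})}=x_{1}^{(\delta_{1})}$ and using $\overline{x_{3}}=x_{3}$ (since $m_{3}=1$); the first summand (interpreted as $0$ when $m_{1}=1$) already lies in $[L,L]\subseteq I$, so $b\,x^{(\delta)}\in I$ and thus $I=\mathscr{L}$. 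The subcase $c\neq 0$ is handled symmetrically via $\{x_{1},\,x_{1}^{(\delta_{1})}x_{2}^{(2^{m_{2}-1}-1)}x_{3}\}$.

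For the second assertion, in which $b=c=0$, $m_{3}=1$ and $(m_{1},m_{2})\neq(1,1)$, the filtered algebra $[\mathscr{L},\mathscr{L}]$ is a deformation of $[L,L]$. Since $(m_{1},m_{2})\neq(1,1)$ together with $m_{3}=1$ forces $\overline{m}\neq\overline{1}$, the Section~4 theorem gives $[L,L]$ simple of dimension $2^{m}-2$, and the same filtration descent applied inside $[\mathscr{L},\mathscr{L}]$ yields its simplicity. The dimension count requires $x^{(\delta)}\notin[\mathscr{L},\mathscr{L}]$, which in turn amounts to extending the Section~4 vanishing identity $\partial_{s}f\,\partial_{k}g+\partial_{k}f\,\partial_{s}g\neq x^{(\delta)}$ (for $s\neq k$ and monomials $f,g$) to the filtered Poisson bracket. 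The main technical obstacle I anticipate is precisely this verification: it hinges on Lucas' theorem and careful bookkeeping at the boundary $2^{m_{i}}$ of the truncated divided-power basis, so as to rule out that any product $\overline{x_{1}}\overline{x_{2}}\cdot(\partial_{1}f\,\partial_{2}g+\partial_{2}f\,\partial_{1}g)$ lands on $x^{(\delta)}$ under the standing hypothesis $(m_{1},m_{2})\neq(1,1)$.
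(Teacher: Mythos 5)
A preliminary remark: the paper does not actually prove this statement --- it is one of the results \emph{announced} at the end of the paper, with proofs deferred to future publications --- so there is no proof of record to compare against, and your proposal has to stand on its own. Its skeleton is sound and in the spirit of Section~4: the computation of $\det M=1$ and of the dual entries $\overline{\omega}_{12}=1+a\overline{x_{1}}\,\overline{x_{2}}$, $\overline{\omega}_{13}=c\overline{x_{2}}\,\overline{x_{3}}$, $\overline{\omega}_{23}=b\overline{x_{1}}\,\overline{x_{3}}$, $\overline{\omega}_{33}=1$ is correct; the descent giving $\mathrm{gr}\,I\supseteq\bigoplus_{i<r}L_{i}$ is correct; and the bracket $\{x_{2},\,x_{1}^{(2^{m_{1}-1}-1)}x_{2}^{(\delta_{2})}x_{3}\}=h+b\,x^{(\delta)}$ with $h=x_{1}^{(2^{m_{1}-1}-2)}x_{2}^{(\delta_{2})}x_{3}$ checks out.

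The gap is the clause ``the first summand already lies in $[L,L]\subseteq I$.'' The inclusion $[L,L]\subseteq I$ is not available: $[L,L]$ is the commutant of the \emph{graded} algebra, while $I$ is an ideal of the \emph{filtered} one, and the descent only yields $\mathrm{gr}\,I\supseteq[L,L]$, i.e.\ $I$ contains some $h'=h+(\text{terms of higher filtration degree})$ whose correction may itself carry an $x^{(\delta)}$-component; subtracting $h'$ from $h+bx^{(\delta)}$ can then annihilate the $x^{(\delta)}$-part. This is not a removable technicality. For $m_{3}=1$, $m_{1}>1$ and $c=0$, the same Lucas bookkeeping that you invoke shows that in \emph{every} bracket of monomials the coefficient of $x^{(\delta)}$ equals $a$ times the coefficient of $h_{a}=x_{1}^{(2^{m_{1}-1}-1)}x_{2}^{(2^{m_{2}-1}-1)}x_{3}$ plus $b$ times the coefficient of $h$ (the prefactors $\overline{x_{1}}\,\overline{x_{2}}$ and $\overline{x_{1}}\,\overline{x_{3}}$ tie each $x^{(\delta)}$-contribution to the corresponding constant-part contribution, and $h$, $h_{a}$ arise from no other term of the bracket). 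Hence the functional $v\mapsto[v]_{x^{(\delta)}}+a[v]_{h_{a}}+b[v]_{h}$ annihilates $[\mathscr{L},\mathscr{L}]$, so $x^{(\delta)}\notin[\mathscr{L},\mathscr{L}]$; for instance for $\overline{m}=(2,1,1)$, $c=0$, $b\neq0$ the commutant has codimension $1$. So your route proves the first assertion only when $m_{3}>1$, when $m_{1}=1$ (where $h=0$ and the bracket is exactly $bx^{(\delta)}$), or symmetrically when $m_{2}=1$ with $c\neq0$; in the remaining range it cannot work as written --- indeed it suggests the announced statement needs extra hypotheses or a different mechanism there. The obstruction you flag as the ``main technical obstacle'' of the second assertion is thus already present, and fatal, in the first; the second assertion is in any case left unproved by your own account.
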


\begin{theorem}
{\it $P(3,\overline{m},\omega_{2})$ is a simple Lie algebra of dimension $2^{m}-1$ for $(a,b,c)\neq (t,t,t)$, $t\in K$ or for $\overline{m}\neq\overline{1}$.}
\end{theorem}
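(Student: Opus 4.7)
The plan is to pass to the associated graded Lie algebra $\text{gr}\,L$ of $L=P(3,\overline m,\omega_2)$ in the standard filtration, and use the preceding simplicity theorem for the constant-coefficient case. The first observation is that $M^{-1}=M$, where $M$ is the matrix of $\omega_2$: writing $M=I+N$ with $N$ off-diagonal and entries proportional to $\overline{x_i}\overline{x_j}$, every entry of $N^2$ carries a factor $\overline{x_k}^{2}=\binom{2^{m_k}}{2^{m_k-1}}x_k^{(2^{m_k})}=0$ in $O(3,\overline m)$ (the binomial coefficient is even and $x_k^{(2^{m_k})}$ is past the height), so $N^2=0$ and $M^{-1}=I+N=M$ in characteristic $2$. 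Hence $\overline a_{ij}=a_{ij}$ and the Poisson bracket is explicit; its leading part in the standard filtration is $\sum_i\partial_if\,\partial_ig$, so $\text{gr}\,L=P(3,\overline m,\omega_2(0))$ with $\omega_2(0)=dx_1^{(2)}+dx_2^{(2)}+dx_3^{(2)}$.

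If $\overline m\neq\overline 1$, some $m_i>1$ with $\overline a_{ii}(0)=1\neq 0$, so by the preceding simplicity theorem $\text{gr}\,L$ is simple of dimension $2^m-1$. Any nonzero ideal $I\subset L$ lifts to a nonzero ideal $\text{gr}\,I\subset\text{gr}\,L$, forcing $\text{gr}\,I=\text{gr}\,L$ and hence $I=L$.

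For $\overline m=\overline 1$ the clause of the preceding theorem for $[P,P]$ excludes $(n,\overline m)=(3,\overline 1)$, so one analyses $\text{gr}\,L$ directly: with $y_i=x_i\in L_{-1}$ giving $\text{ad}(y_i)=\partial_i$, irreducibility of $L_{-1}$ as an $L_0$-module, and the identity $[L_1,L_{-1}]=L_0$, every nonzero ideal of $\text{gr}\,L$ contains $[\text{gr}\,L,\text{gr}\,L]$; since $L_1=\langle x_1x_2x_3\rangle$ is unreachable by a bracket inside $\text{gr}\,L$ on degree grounds, the ideals of $\text{gr}\,L$ are exactly $0$, $[\text{gr}\,L,\text{gr}\,L]$, and $\text{gr}\,L$. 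The decisive filtered computation is
\begin{align*}
\{x_2x_3,x_1\} &= (a+b)\,x_1x_2x_3,\\
\{x_1x_3,x_2\} &= (a+c)\,x_1x_2x_3,\\
\{x_1x_2,x_3\} &= (b+c)\,x_1x_2x_3,
\end{align*}
which follows directly from the explicit Poisson bracket: the constant part $\sum_i\partial_if\,\partial_ig$ vanishes on these pairs, and only the off-diagonal $a,b,c$ terms survive (using $x_i^2=0$). Under the hypothesis $(a,b,c)\neq(t,t,t)$ at least one of $a+b$, $a+c$, $b+c$ is nonzero, so $x_1x_2x_3\in[L,L]$ and consequently $L=[L,L]$.

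Simplicity of $L$ then follows: for any nonzero ideal $I\subset L$, $\text{gr}\,I$ is either $\text{gr}\,L$ (whence $I=L$) or $[\text{gr}\,L,\text{gr}\,L]$, in which case $I$ has codimension $1$ in $L$, the quotient $L/I$ is abelian, $[L,L]\subseteq I$, and perfectness of $L$ forces $I=L$. The main obstacle is the $\overline m=\overline 1$ case: the ideal structure of $\text{gr}\,L$ must be verified directly since the preceding theorem does not apply there, and one must recognise that the non-constant corrections in $\omega_2$ contribute to precisely the brackets that vanish in $\text{gr}\,L$ and deliver $x_1x_2x_3$ into $[L,L]$ exactly under $(a,b,c)\neq(t,t,t)$, thereby isolating the single exceptional configuration.
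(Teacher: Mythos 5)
The paper does not actually prove this theorem: it is one of the three results \emph{announced} at the end of Section 4 (``The authors plan to consider exceptional cases in future publications. Below we announce some results\dots''), so there is no proof to compare against. Judged on its own, your argument is correct and is exactly in the spirit of the paper's machinery: the observation that $N^{2}=0$ (because $\overline{x_k}^{\,2}=0$) so $M^{-1}=M$ is right, the identification $\mathrm{gr}\,L=P(3,\overline m,\omega_2(0))$ with $\omega_2(0)=\sum dx_i^{(2)}$ is right, the case $\overline m\neq\overline 1$ follows from the Section 3 simplicity theorem plus the standard ``$\mathrm{gr}$ of an ideal is an ideal'' argument, and the computations $\{x_2x_3,x_1\}=(a+b)x_1x_2x_3$, etc., are correct and do show that $x^{(\delta)}=x_1x_2x_3$ lands in $[L,L]$ precisely when $(a,b,c)\neq(t,t,t)$, which closes the case $\overline m=\overline 1$ via perfectness.

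Two small points to tighten. First, your claim that $x_1x_2x_3$ is ``unreachable by a bracket inside $\mathrm{gr}\,L$ on degree grounds'' is not purely a degree statement: degree only reduces the question to $[L_0,L_1]$, and one still needs the computation that $\partial_sf\,\partial_kg+\partial_kf\,\partial_sg$ never yields $x^{(\delta)}$ and that the diagonal terms die because $x_i^{2}=0$ when $m_i=1$; this is exactly what the proof of the Section 3 theorem establishes, so cite that computation rather than ``degree grounds.'' Second, the statement that the only ideals of $\mathrm{gr}\,L$ are $0$, $[\mathrm{gr}\,L,\mathrm{gr}\,L]$ and $\mathrm{gr}\,L$ rests on transitivity ($L_i\subset I$ for $i<r$ for any nonzero ideal $I$), which again is taken from the Section 3 proof and should be referenced explicitly since that theorem's $[P,P]$-clause formally excludes $(n,\overline m)=(3,\overline 1)$ even though its intermediate steps do not. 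With those citations made explicit, the proof is complete.
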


\begin{theorem}
{\it If either $(m_{1},m_{3})\neq(1,1)$, or $b\neq0$ or $a\neq c$, then $P(3,\overline{m},\omega_{3})$ is a simple Lie algebra of dimension $2^{m}-1$. If $a=c$, $b=0$, $(m_{1}, m_{3})=(1,1)$ and $m_{2}\neq 1$, then $[P(3,\overline{m},\omega_{3}), P(3,\overline{m},\omega_{3})]$ is a simple Lie algebra of dimension $2^{m}-2$.}
\end{theorem}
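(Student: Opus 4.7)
The strategy follows the template of Section~4, adapted to the specific non-constant-coefficient form $\omega_{3}$. Let $\mathscr{L}=P(3,\overline{m},\omega_{3})$ and let $I\subset \mathscr{L}$ be a nonzero ideal. By the argument of Section~4, repeated bracketing with the elements $y_{i}$ satisfying $\text{ad}\,y_{i}=\partial_{i}$ forces $I\cap \mathscr{L}_{-1}\neq 0$; the irreducibility of the $\mathscr{L}_{0}$-module $\mathscr{L}_{-1}$, supplied by Proposition~\ref{filt 2}~$(iii)$ applied to $\omega_{3}(0)=dx_{1}dx_{2}+dx_{2}^{(2)}+dx_{3}^{(2)}$ (non-degenerate, $n=3$), then yields $\mathscr{L}_{-1}\subset I$ and, iterating, $\mathscr{L}_{i}\subset I$ for all $i<r$. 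The decisive question is whether the top monomial $x^{(\delta)}$, $\delta=(2^{m_{1}}-1,2^{m_{2}}-1,2^{m_{3}}-1)$, also lies in $I$.

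Inverting the $3\times 3$ matrix of $\omega_{3}(0)$ in characteristic $2$ gives $\overline{a}_{11}(0)=\overline{a}_{33}(0)=\overline{a}_{12}(0)=1$ while $\overline{a}_{22}(0)=\overline{a}_{13}(0)=\overline{a}_{23}(0)=0$. This immediately settles the sub-case $(m_{1},m_{3})\neq(1,1)$: if $m_{3}>1$, then $x_{3}^{(2)}\in\mathscr{L}_{0}$ and $\{x_{3}^{(2)},x^{(\delta)}\}=\overline{a}_{33}(0)\,x^{(\delta)}=x^{(\delta)}$, placing $x^{(\delta)}$ into $I$, and symmetrically for $m_{1}>1$. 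When $m_{1}=m_{3}=1$, one has $\overline{x_{1}}=x_{1}$, $\overline{x_{3}}=x_{3}$, and the non-constant terms of $\omega_{3}$ contribute entries to $M^{-1}$ that are nontrivial polynomials in $a,b,c$. Under the hypothesis $b\neq 0$ (resp.\ $a\neq c$), I would identify an explicit monomial pair $(x^{(\alpha)},x^{(\beta)})$ with $\alpha+\beta$ near $\delta$ whose Poisson bracket picks out $x^{(\delta)}$ with coefficient a nonzero polynomial in the parameters: the extra term from $b\overline{x_{1}}\overline{x_{3}}dx_{1}dx_{3}$, or the asymmetric contributions from $a\overline{x_{1}}\overline{x_{2}}dx_{1}dx_{2}$ versus $c\overline{x_{2}}\overline{x_{3}}dx_{2}dx_{3}$, supplies the bracket combination absent in the pure constant-coefficient setting.

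In the remaining case $a=c$, $b=0$, $m_{1}=m_{3}=1$, $m_{2}\neq 1$, the parity computation of Section~4 applies: only $\overline{a}_{33}$ contributes a constant-term $x^{(\delta)}$-coefficient, the non-constant contributions from $a$ and $c$ cancel under the symmetry $a=c$, and $b=0$ kills the remaining cross-term, whence $x^{(\delta)}\notin [\mathscr{L},\mathscr{L}]$. Therefore $\mathscr{L}$ is not simple and $[\mathscr{L},\mathscr{L}]=\langle x^{(\alpha)}\mid \alpha\neq\delta\rangle$ has dimension $2^{m}-2$. Simplicity of $[\mathscr{L},\mathscr{L}]$ is then established by re-running the ideal-shrinking argument inside the derived algebra, where the assumption $m_{2}>1$ provides the element $x_{2}^{(2)}$ together with enough monomials of the form $x_{2}^{(k)}x_{1}$, $x_{2}^{(k)}x_{3}$ whose mutual brackets fill out all the $x^{(\delta-\varepsilon_{i})}$, $i=1,2,3$.

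The main technical obstacle lies in the $m_{1}=m_{3}=1$ sub-cases of simplicity. One must explicitly compute enough entries of $M^{-1}$ to isolate which coefficients of $\overline{a}_{ij}$ are proportional to $b$ and which to $a-c$, then pinpoint monomial pairs $(x^{(\alpha)},x^{(\beta)})$ whose Poisson bracket isolates the $x^{(\delta)}$-contribution from those particular coefficients. Verifying that the resulting coefficient of $x^{(\delta)}$ is nonzero precisely under the stated hypothesis, and vanishes precisely in the exceptional second case, is the combinatorial core of the argument and the place where the dichotomy between simplicity of $\mathscr{L}$ and simplicity of $[\mathscr{L},\mathscr{L}]$ is forced.
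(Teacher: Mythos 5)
This theorem is one of the results the authors only \emph{announce} at the end of the paper (``The authors plan to consider exceptional cases in future publications''); no proof is given there, so there is nothing to compare your argument against line by line. Judged on its own terms, your text is a plan rather than a proof, and the gap sits exactly where the theorem's content lies. Your skeleton is sound: the Section~4 template (drive a nonzero ideal $I$ down to degree $-1$, use irreducibility of $L_{-1}$ to sweep up everything below the top degree, then decide whether $x^{(\delta)}$ lies in $I$), and your computation $\overline{a}_{11}(0)=\overline{a}_{33}(0)=\overline{a}_{12}(0)=1$, $\overline{a}_{22}(0)=\overline{a}_{13}(0)=\overline{a}_{23}(0)=0$ is correct and does settle the sub-case $(m_{1},m_{3})\neq(1,1)$ via $\{x_{i}^{(2)},x^{(\delta)}\}=x^{(\delta)}$. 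But the entire dichotomy of the theorem is decided in the sub-case $m_{1}=m_{3}=1$, where neither $x_{1}^{(2)}$ nor $x_{3}^{(2)}$ lives in the algebra and $\overline{a}_{22}(0)=0$, so the constant part of the bracket can never reach $x^{(\delta)}$: everything hinges on the non-constant entries of $M^{-1}$ coming from $a\overline{x_{1}}\overline{x_{2}}dx_{1}dx_{2}+b\overline{x_{1}}\overline{x_{3}}dx_{1}dx_{3}+c\overline{x_{2}}\overline{x_{3}}dx_{2}dx_{3}$. You say ``I would identify an explicit monomial pair \ldots whose Poisson bracket picks out $x^{(\delta)}$ with coefficient a nonzero polynomial in the parameters'' and call this ``the combinatorial core of the argument'' --- but you never produce the pair, never compute the relevant entries of $M^{-1}$ beyond degree zero, and never verify that the coefficient is nonzero exactly when $b\neq 0$ or $a\neq c$ and vanishes when $a=c$, $b=0$. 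The asserted cancellation ``under the symmetry $a=c$'' is likewise unverified, and the simplicity of $[\mathscr{L},\mathscr{L}]$ in the exceptional case is only gestured at.

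Two further points to repair when you carry this out. First, $P(3,\overline{m},\omega_{3})$ is a \emph{filtered}, not graded, algebra (the form has non-constant coefficients), so statements like ``$I\cap\mathscr{L}_{-1}\neq 0$'' and ``$\mathscr{L}_{i}\subset I$'' must be rephrased in terms of the filtration $\{\mathscr{L}_{(i)}\}$ and the associated graded ideal $\mathrm{gr}\,I$; this matters precisely because the graded algebra $\mathrm{gr}\,\mathscr{L}=P(3,\overline{m},\omega_{3}(0))$ is \emph{not} simple when $m_{1}=m_{3}=1$ (by the Section~4 theorem), whereas the filtered deformation is claimed to be --- so the top component of $\mathrm{gr}\,I$ genuinely requires the deformation terms. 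Second, it is worth observing that $\overline{x_{i}}\,\overline{x_{j}}\,dx_{i}dx_{j}=dz_{i}dz_{j}$, so $\omega_{3}=\omega_{3}(0)+a\,dz_{1}dz_{2}+b\,dz_{1}dz_{3}+c\,dz_{2}dz_{3}$ with $\eta\in H^{2}(\Omega)$; working with $D_{f}\lrcorner\,\omega=df$ for $f$ involving the ``extra'' cocycles $x_{i}^{(2^{m_{i}})}$ is likely a cleaner route to the needed brackets than inverting the full matrix $M$ over $O(3,\overline{m})$. As it stands, the proposal identifies the right battlefield but does not fight the battle.
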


\newpage


\begin{thebibliography}{99}
	
    \bibitem{Kap} Kaplansky, I. (1982) Some simple Lie algebras of characteristic 2. \textit{Lecture Notes in Math.} 933: 127--129.
    
    \bibitem{Bl} Block, R. E. (1958) New simple Lie algebras of prime characteristic. \textit{Trans. Amer. Math. Soc.} 89: 457--474.
    
    \bibitem{W1} Wilson, R. L. (1969) Nonclassical simple Lie algebras. \textit{Bull. Amer. Math. Soc.} 75: 987--991.
    
    \bibitem{KKu} Kostrikin, A. I., Kuznetsov, M. I. (1994) On the structure of modular Lie algebras connected with analytic pro-$p$-groups. \textit{Ross. Akad. Nauk Dokl.} 339(5): 591--593. (in Russian); English translation: (1995) \text{Russian Akad. Sci. Dokl. Math.} 50(3): 487--490.
    
    \bibitem{N} Lin,L. (1993) Non-alternating Hamiltonian algebra p(n, m) of characteristic two. \textit{Commun. Algebra} 21(2): 399--411.
    
    \bibitem{BGL} Bouarrouj, S., Grozman, P., Lebedev, A., Leites, D. (2010) Divided power (co)homology. Presentation of simple finite dimensional modular superalgebras with Cartan matrix \textit{Homology, Homotopy Appl.} 12(1): 237--248.
	
	\bibitem{YL} Yier, U., Leites, D., Messaoudene, M., Shchepochkina, I. (2010) Examples of simple vectorial Lie algebras in characteristic 2 \textit{J. Nonlin. Math. Phys.} 17, Suppl. 1: 311--374.
	
	\bibitem{L} Lebedev, A. (2010) Analog of orthogonal, Hamilton, and contact Lie superalgebras in characteristic 2. \textit{J. Nonlin. Math. Phys.} 17, Suppl. 1: 399--411.
	
	\bibitem{Kac} Kac, V. G. (1974), Description of filtered Lie algebras with which graded Lie algebras of cartan type are assiciated. \textit{Izv. Akad. Nauk SSSR Ser. Mat.} 38 (4): 800-834; (1976), Errata, 40: 1415. (in Russian); English translation: \textit{Math. USSR-Izv.} 8 (4): 801-835; (1976), Errata, 10: 1339. 
	
	\bibitem{KKCh1} Kuznetsov, M. I., Kondrateva, A. V., Chebochko, N. G. (2017) Simple 14-dimensional Lie algebras in characteristic 2. \textit{Zapiski Nauch. Semin. POMI} 460: 158--167. (in Russian); English translation: \textit{math. Sciences} (to appear).
	
		\bibitem{St} Strade, H. (2004) Simple Lie algebras over fields of positive characteristic I. Structural theory. \text{De Gruyter expositions in mathematics; 38}.
		
		\bibitem{S1} Skryabin, S. M. (1986) Canonical forms of Hamiltonian and contact forms over divided power algebras. \textit{Deposit in VINITI} 8594-B86. (in Russian).
		
		\bibitem{S} Skryabin, S. M. (1990) Classification of Hamiltonian forms over divided power. \textit{Mat. Sb.} 181: 114--133. (in Russian); English translation: (1991)\textit{Math. USSR-Sb.} 69: 121--141. 
	
         \bibitem{Sk1} Skryabin, S. M. (1991) Modular Lie algebras over algebraically non-closed fields. I. \textit{Commun. Algebra} 19: 1629--1741.
	
	 \bibitem{Sk3} Skryabin, S. M. (1993) An algebraic approach to the Lie algebras of Cartan type. \textit{Commun. Algebra} 21: 1229--1336.	
     
     \bibitem{Sk2} Skryabin, S. M. (2015) Forms of Lie algebras of Cartan type. \text{Lambert Academic Publishing}, ISBN-13: 978-3-659-76486-8, ISBN-10: 3659764868.(in Russian).

	\bibitem{Blattner} Blattner, R. J. (1969) Induced and produced representations of Lie algebras. \textit{Trans. Amer. Math. Soc.} 144: 421--449.
		
	\bibitem{K} Kuznetsov, M. I. (1989) Truncated induced modules over transitive Lie algebras of characteristc of $p$. \textit{Izv. Akad. Nauk SSSR Ser. Mat.} 53(3): 557--589. (in Russian); English translation: (1990) \textit{Math. USSR-Izv.} 34: 575--608. 
		
	\bibitem{KuKir} Kuznetsov, M. I., Kirillov, S. A. (1986) Hamiltonian differential forms over an algebra of truncated polynomials. \textit{Uspekhi Mat. Nauk} 41: 197-198. (in Russian); English translation: \text{Russian Math. Surveys} 41: 205--206. 
	
	\bibitem{BG} Benkart, G. M., Gregory, T. B., Osborn J. M., Wilson, R. L. (1989) Isomorphism classes of Hamiltonian Lie algebras. \textit{Lecture Notes in Math.} 1373: 42--57.
			
	\bibitem{KSh} Kostrikin, A. I., Shafarevich I. R. (1969) Graded Lie algebras of finite characteristic. \textit{Izv. Akad. Nauk SSSR Ser. Mat.} 33(2): 251--322. (in Russian); English translation: \textit{Math. USSR-Izv.} 3(2): 237--304.  
				 	
	\bibitem{KKCh} Kuznetsov, M. I., Kondrateva, A. V., Chebochko, N. G. (2016) On Hamiltonian Lie algebras of characteristic 2. \textit{Matem. J.} 16(2): 54--65. (in Russian) (available at http://www.math.kz).
	
		\bibitem{W} Wilson, R. L. (1971) Classification of generalized Witt algebras over algebraically closed fields. \textit{Trans. Amer. Math. Soc.} 153: 191-210.
		
	\bibitem{Kr} Krylyuk, Ya. S. (1978) On irreducible modular Lie algebras of Cartan type in finite characteristic. I, II, III \textit{Manuscripts Nos. 3863-78, 3864-78, 3865-78, deposited at VINITI}. (in Russian). 	
	
    \bibitem{K1} Kuznetsov, M. I. (1991) Embedding Theorem for transitive filtered Lie algebras of characteristic $p$ \textit{Izv. VUZ., Mathematics} 10: 43-45. (in Russian); English translation: \textit{Soviet Math. (Iz. VUZ)} 35(10):39–41.
	
	\bibitem{HP} Hodge, W. V. D., Pedoe D. (1947) \textit{Methods of algebraic geometry. Vol. 1} \textit{Cambridge Univ. Press, New York.}
	


\end{thebibliography}
\end{document}